\documentclass[12pt, reqno]{amsart}
\usepackage{amsmath, amstext, amsbsy, amssymb}

\makeatletter
\@namedef{subjclassname@2020}{%
  \textup{2020} Mathematics Subject Classification}
\makeatother

\setlength{\textheight}{8.6in} \setlength{\textwidth}{35pc}
\setlength{\topmargin}{-0.1in} \setlength{\footskip}{0.2in}
\setlength{\oddsidemargin}{.573125pc}
\setlength{\evensidemargin}{\oddsidemargin}

\newtheorem{theorem}{Theorem}[section]
\newtheorem{lemma}[theorem]{Lemma}
\newtheorem{proposition}[theorem]{Proposition}
\newtheorem{corollary}[theorem]{Corollary}
\theoremstyle{definition}
\newtheorem{definition}[theorem]{Definition}

\theoremstyle{remark}
\newtheorem{remark}[theorem]{Remark}
\newtheorem{conjecture}[theorem]{Conjecture}

\numberwithin{equation}{section}

\newcommand{\ch}{{\rm ch} }
\newcommand{\C}{ \mathbb C }
\newcommand{\Coe}{ {\rm Coeff} }

\newcommand{\End}{{\rm End}}
\newcommand{\fa}{ \mathfrak a }

\newcommand{\fG}{ \mathfrak G }
\newcommand{\fL}{ \mathfrak L }
\newcommand{\fn}{ \mathfrak n }
\newcommand{\fock}{{\mathbb H}_X}
\newcommand{\fockprime}{{\mathbb H}_X'}

\newcommand{\Hn}{H^*(\Xn)}

\newcommand{\la}{{\lambda}}
\newcommand{\lambsq}{s(\lambda)}
\newcommand{\Ln}{L^{[n]}}
\newcommand{\MD}{\mathcal {MD}}

\newcommand{\N}{\mathbb N}
\newcommand{\On}{{\mathcal O}^{[n]}}

\newcommand{\Q}{ \mathbb Q }

\newcommand{\qMD}{{\rm q}{\mathcal {MD}}}
\newcommand{\qMZV}{ {\bf qMZV} }

\newcommand{\T}{ \mathbb T }
\newcommand{\Tr}{ {\rm Tr} }

\newcommand{\vac}{|0\rangle}

\newcommand{\w}{\tilde}
\newcommand{\W}{\widetilde}
\newcommand{\Wb}{ {\bf W} }

\newcommand{\Xn}{ {X^{[n]}}}
\newcommand{\Z}{ \mathbb Z }

\def\beq{\begin{equation}}
\def\eeq{\end{equation}}

\numberwithin{equation}{section}

\begin{document}

\title[Toward Qin's Conjecture]
      {Toward Qin's Conjecture on Hilbert schemes of points 
      and quasi-modular forms}
      


\author[Mazen M. Alhwaimel]{Mazen M. Alhwaimel}
\address{Department of Mathematics, College of Science, Qassim University, P. O. Box 6644, Buraydah 51452, Saudi Arabia}
\email{4200@qu.edu.sa} 

\date{\today}
\keywords{Hilbert schemes of points on surfaces; 
multiple $q$-zeta values; quasi-modular forms; Heisenberg operators; 
generalized partitions.} 
\subjclass[2020]{Primary 14C05; Secondary 11M32}

\begin{abstract}
For a line bundle $L$ on a smooth projective surface $X$
and nonnegative integers $k_1, \ldots, k_N$, 
Okounkov \cite{Oko} introduced the reduced generating series 
$\big \langle \ch_{k_1}^{L} \cdots \ch_{k_N}^{L} \big \rangle'$ 
for the intersection numbers among the Chern characters of 
the tautological bundles over the Hilbert schemes of points on $X$ 
and the total Chern classes of the tangent bundles of 
these Hilbert schemes. 
In \cite{Qin2}, Qin conjectured  that 
these reduced generating series are quasi-modular forms if 
the canonical divisor of $X$ is numerically trivial. 
In this paper, we verify that Qin's conjecture holds  for 
$\langle \ch_1^{L_1}\ch_1^{L_2} \rangle'$.
The main approaches are to use the methods laid out in \cite{QY} and construct various relations regarding
 multiple $q$-zeta values and quasi-modular forms.

\end{abstract}

\maketitle

\section{\bf Introduction} 
\label{sect_intr}

Hilbert schemes of points have been of great interests in algebraic geometry since the pioneering work of Grothendieck.
Hilbert schemes of points, parametrizing $0$-dimensional closed subschemes,
on a smooth projective surface are known to be smooth and irreducible. 

Let $X$ be a smooth projective complex surface, 
and let $\Xn$ be the Hilbert scheme of $n$ points in $X$. A line bundle 
$L$ on $X$ induces a tautological rank-$n$ bundle $\Ln$ on $\Xn$.
Let $\ch_k(\Ln)$ be the $k$-th Chern character of $\Ln$.
Following Okounkov \cite{Oko}, we introduce the two generating series:
\begin{eqnarray}     
\big \langle \ch_{k_1}^{L_1} \cdots \ch_{k_N}^{L_N} \big \rangle
  &=&\sum_{n \ge 0} q^n \, \int_\Xn \ch_{k_1}(L_1^{[n]}) \cdots \ch_{k_N}(L_N^{[n]}) 
        \cdot c(T_\Xn)   \label{OkoChkN.1}   \\
\big \langle \ch_{k_1}^{L_1} \cdots \ch_{k_N}^{L_N} \big \rangle'
  &=&\frac{\big \langle \ch_{k_1}^{L_1} \cdots \ch_{k_N}^{L_N} 
     \big \rangle}{\langle \rangle}
      = (q; q)_\infty^{\chi(X)} \cdot \big \langle 
      \ch_{k_1}^{L_1} \cdots \ch_{k_N}^{L_N} \big \rangle  \label{OkoChkN.2} 
\end{eqnarray}
where $0 < |q| < 1$, $c\big (T_\Xn \big )$ is the total Chern class of 
the tangent bundle $T_\Xn$, $\chi(X)$ is the Euler characteristics of $X$, 
$(a; q)_n = \prod_{i=0}^n (1-aq^i)$, and  
$$
  \langle \rangle 
= \sum_{n \ge 0} q^n \, \int_\Xn c(T_\Xn)
= \frac1{(q; q)_\infty^{\chi(X)}}
$$
which is a formula due to G\"ottsche \cite{Got}.

\begin{conjecture}   \label{OkoConj}
(\cite[Conjecture~2]{Oko}) Let $L$ be a line bundle on a smooth 
projective surface $X$. Then 
$\big \langle \ch_{k_1}^L \cdots \ch_{k_N}^L \big \rangle'$ is 
a multiple $q$-zeta value of weight at most 
$
\sum_{i=1}^N (k_i + 2).
$
\end{conjecture}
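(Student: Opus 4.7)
\medskip
\noindent\textbf{Proof proposal.}
The plan is to work on the Fock space $\fock = \bigoplus_{n \ge 0} H^*(\Xn)$ carrying the Nakajima--Grojnowski Heisenberg action and to reduce the generating series to a trace computation whose output lies in the algebra of multiple $q$-zeta values. This is the philosophy of \cite{QY} cited in the abstract, which I would use as a template.

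First, I would assemble explicit formulas expressing $\ch_k(\Ln)$ as an operator on $\fock$ in terms of the Heisenberg generators $\fa_m(\g)$ with $m \in \Z \setminus \{0\}$ and $\g \in H^*(X)$. Such formulas generalize Lehn's formula for $c_1(\Ln)$ and use the iterated-derivative expansions involving $\fd$ developed in \cite{QY}; the operator form of $c(T_\Xn)$ is similarly accessible via vertex-algebra techniques. After multiplying by $q^n$ and summing over $n$, the integral in (\ref{OkoChkN.1}) becomes a weighted trace $\Tr_\fock\!\left(q^{L_0} \cdot \mathcal{O}\right)$, where $\mathcal{O}$ is the composition of explicit Heisenberg expressions representing $\ch_{k_1}(L_1^{[n]}) \cdots \ch_{k_N}(L_N^{[n]}) \cdot c(T_\Xn)$.

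Next, I would use the commutation relations of the Heisenberg algebra together with cyclic invariance of the trace to pull annihilation operators to the right, producing a finite sum of products of the form $q^{ms}/(1-q^m)^r$ for $m \in \N$ and suitable exponents determined by the indices $k_i$. Dividing by $\langle\rangle = 1/(q;q)_\infty^{\chi(X)}$ kills the trivial vacuum contribution, and what remains should rearrange into combinations of $q$-polylogarithmic Eisenstein-type sums, that is, multiple $q$-zeta values. The upper bound $\sum_{i=1}^{N}(k_i+2)$ on the weight should emerge by tracking, on one hand, the number of factors $(1-q^m)^{-1}$ appearing, and on the other hand, the degree $k_i+1$ of the derivation $\fd^{k_i+1}$ that shows up in the Heisenberg formula for $\ch_{k_i}$, with the extra $+1$ accounting for the $q^n$ weighting.

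The main obstacle, I expect, is twofold. First, Heisenberg formulas for $\ch_k(\Ln)$ explicit enough to commute cleanly through the operator realizing $c(T_\Xn)$ grow combinatorially unwieldy as $k$ and $N$ increase, so keeping the intermediate expressions manageable requires a careful bookkeeping scheme; this is essentially the bottleneck that restricts this paper to $\langle \ch_1^{L_1}\ch_1^{L_2}\rangle'$. Second, showing that the resulting $q$-series actually lands in the $\qMZV$ algebra with the \emph{sharp} weight $\sum(k_i+2)$ is delicate: cancellations among Heisenberg summands can create apparent weights that exceed the bound and must be shown to cancel, while the many known linear relations among multiple $q$-zeta values make the identification non-unique. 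For these reasons I would first test the strategy on $N=1$ and on the case $\langle \ch_1^{L_1}\ch_1^{L_2}\rangle'$ treated here, and then attempt a double induction on $N$ and on the total weight $\sum k_i$, using the commutator $[\ch_1^L, \,\cdot\,]$ to relate higher $\ch_k^L$ to lower ones.
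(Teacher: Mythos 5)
This statement is Okounkov's Conjecture~\ref{OkoConj}, which the paper records as an \emph{open conjecture} and does not prove; the paper only verifies the related Conjecture~\ref{QinConj} of Qin in the single case $\langle \ch_1^{L_1}\ch_1^{L_2}\rangle'$. So there is no proof in the paper to compare against, and your text should be judged as a standalone argument. As such, it is a research outline rather than a proof, and it has genuine gaps at every step that actually matters. You never write down a closed-form Heisenberg expansion of the operators $\fG_k(\alpha)$ valid for all $k$ that can be commuted through the Carlsson--Okounkov operator $\Wb(L,z)$; Theorem~\ref{char_th} gives the shape of such an expansion, but the coefficients $g_{1,\lambda}, g_{2,\lambda}$ are not explicit, and the trace reduction (Lemma~\ref{ProofOfQin5.8} and its descendants) produces multi-index sums such as $h_{1,1}^{(0)}, h_{1,1}^{(2)}, h_{1,1}^{(4)}$ whose membership in $\qMZV$ is \emph{not} visible from their form --- the paper has to prove this separately (Proposition~\ref{h11024}) by first establishing quasi-modularity via an independent equivariant argument and then matching $q$-expansions against a known finite basis. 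Your claim that the surviving sums ``should rearrange into'' multiple $q$-zeta values of the right weight is precisely the content of the conjecture, not a consequence of the trace computation; nothing in the Heisenberg bookkeeping by itself certifies that an expression like $\sum_{i,j,k,\ell>0,\,i+j=k+\ell}(\cdots)$ lies in $\qMZV$ at all, let alone with weight at most $\sum(k_i+2)$.

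The proposed closing move --- a double induction on $N$ and $\sum k_i$ using the commutator $[\ch_1^L,\cdot]$ to relate higher $\ch_k^L$ to lower ones --- is also unsupported. Commutators of Chern character operators generate a $\mathcal W$-type algebra (cf.\ \cite{LQW2}), but they do not express $\fG_k$ in terms of $\fG_1$ and lower-order operators in a way that would let you induct on $k$ inside the trace, and even if they did, the trace of a commutator against $\Wb(\fL_1,z)$ is not zero (the vertex operator breaks cyclicity), so no recursion on weight falls out. In short: your outline correctly identifies the machinery the paper uses for its one worked case (the trace formula of Lemma~\ref{FtoW}, Heisenberg commutation, division by $\langle\rangle$), but it does not close any of the gaps that keep Conjecture~\ref{OkoConj} open.
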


\begin{conjecture}   \label{QinConj}
(\cite{Qin2}) Let $L_1, \ldots, L_N$ be line bundles on a smooth 
projective surface $X$. If the canonical divisor of $X$ 
is numerically trivial, then 
$\big \langle \ch_{k_1}^{L_1} \cdots \ch_{k_N}^{L_N} \big \rangle'$ is  
a quasi-modular form of weight at most $\sum_{i=1}^N (k_i + 2)$.
\end{conjecture}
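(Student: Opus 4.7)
The plan is to attack Qin's conjecture by lifting the intersection calculations on $\Xn$ to the Nakajima--Grojnowski Fock space $\fock = \bigoplus_{n \ge 0} \Hn$ and then using the explicit operator formalism of \cite{QY} to extract quasi-modularity from the resulting $q$-series. Using Lehn's formula for the operators attached to $c(T_\Xn)$ and the Boissi\`ere--Nieper-Wisskirchen / Li--Qin--Wang description of $\ch_k(\Ln)$, one expresses each factor as a polynomial in Heisenberg generators whose coefficients are pulled from $H^*(X)$. The reduced series then becomes a partition sum
\[
\big \langle \ch_{k_1}^{L_1} \cdots \ch_{k_N}^{L_N} \big \rangle'
= \sum_\lambda q^{|\lambda|} f_{\vec k, \vec L}(\lambda; X),
\]
with $f_{\vec k, \vec L}$ a rational combination of shifted-symmetric polynomials in the parts of $\lambda$ and divisor factors $(1 - q^{\lambda_i})^{-1}$, whose coefficients are intersection numbers on $X$.

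The next step exploits $K_X \equiv 0$ numerically to factor the $X$-dependence out of the $q$-dependence. When $K_X$ is numerically trivial, the cup product on $H^*(X)$ is pinned down entirely by the pairings $L_i \cdot L_j$, by $\chi(X)$, and (for surfaces with $b_1 > 0$) by the symplectic form on $H^1(X)$; moreover the Euler class $e(X) = c_2(X)$ acts in intersection as $\chi(X)\,[{\rm pt}]$, and any intersection term containing $K_X$ drops out. Pushing these relations through the operator trace, the QY bookkeeping then forces a decomposition
\[
\big \langle \ch_{k_1}^{L_1} \cdots \ch_{k_N}^{L_N} \big \rangle'
= \sum_J c_J \big(X; L_1, \ldots, L_N \big) \cdot \Phi_J(q),
\]
where each $c_J$ is a polynomial of bounded degree in $\chi(X)$ and the pairings, and each $\Phi_J(q)$ is a universal $q$-series --- independent of $X$ --- of the shape $\sum_\lambda q^{|\lambda|} P_J(\lambda) \prod_i R_J(\lambda_i)$, with $P_J$ shifted-symmetric and $R_J$ rational in $q^{\lambda_i}$.

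The final step is to show that every $\Phi_J(q)$ is a quasi-modular form of weight at most $\sum_i (k_i+2) - \deg(c_J)$, so that the weighted sum is quasi-modular of weight at most $\sum_i (k_i + 2)$. I would deduce this in two stages: first, the Bloch--Okounkov theorem places the $P_J$-part, once stripped of divisor factors, in the algebra of quasi-modular forms of the correct weight; second, the divisor-sum factors of the form $\sum_\lambda q^{|\lambda|} \prod_i (1-q^{\lambda_i})^{-1}$ are absorbed using Bachmann--K\"uhn identities that rewrite $q$-brackets against such factors as shifted divisor-sum quasi-modular forms. The weight bound $\sum_i(k_i+2)$ is read off from the cohomological degree of $\ch_k$ on $\Xn$, which carries weight $k+2$ on the $q$-side.

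The main obstacle is this last identification in full generality: Okounkov's conjecture (Conjecture~\ref{OkoConj}) predicts only that the $\Phi_J$ lie in the multiple $q$-zeta value algebra for arbitrary $X$, whereas Qin's refinement demands that the hypothesis $K_X \equiv 0$ force them into the strictly smaller quasi-modular subalgebra. The extra structure must come from an ${\rm SL}_2(\Z)$-equivariance of the trace functional that is invisible in the purely combinatorial setup, reflecting the Jacobi-form structure of the Hilbert-scheme elliptic genus when $K_X$ is numerically trivial. My plan is to first extend the $\ch_1^{L_1}\ch_1^{L_2}$ calculation of this paper to all $\ch_1^{L_1}\cdots\ch_1^{L_N}$ by induction on $N$, then climb to general $k_i$ using the $W$-algebra commutation relations among the $\ch_k$ operators to recursively reduce weight-$(k+2)$ contributions to weight-$(k+1)$ ones modulo explicit quasi-modular remainders, and finally to bootstrap from K3 and abelian surfaces --- where the elliptic-genus connection is cleanest --- to Enriques and bielliptic surfaces via the universality of the $\Phi_J$. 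Verifying closure of this induction --- showing each step stays inside the quasi-modular algebra and respects the weight bound --- is where the bulk of the combinatorial work lies.
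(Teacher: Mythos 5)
First, be aware that the statement you are proving is Conjecture~\ref{QinConj}, which remains open: the paper does not prove it in general, but only verifies the single case $N=2$, $k_1=k_2=1$ (Theorem~\ref{corollary_ch1Lch1L}), by an explicit computation that identifies the surviving universal $q$-series as polynomials in $Z(2), Z(4), Z(6)$ by matching finitely many $q$-coefficients against a basis of quasi-modular forms of weight at most $6$ (Proposition~\ref{h11024}). Your general framework --- expressing the reduced series as traces of Heisenberg monomials against the Carlsson--Okounkov vertex operator, extracting universal $q$-series $\Phi_J(q)$ with intersection-number coefficients, and observing that all terms carrying $K_X$ drop out when $K_X$ is numerically trivial --- is consistent with what the paper and \cite{QY} do. But your proposal does not close the gap that makes the conjecture a conjecture.

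The essential unproven step is your claim that every surviving $\Phi_J(q)$ is quasi-modular. What the general machinery (Lemma~\ref{ProofOfQin5.8} and its relatives) actually yields is that the $\Phi_J$ are multiple $q$-zeta values, i.e.\ elements of $\qMZV$; that is the content of Okounkov's Conjecture~\ref{OkoConj} and of \cite{QY}. The algebra ${\bf QM}=\Q[Z(2),Z(4),Z(6)]$ is a \emph{proper} subalgebra of $\qMZV$, and neither of your two stages places the $\Phi_J$ inside it: the Bloch--Okounkov theorem applies to $q$-brackets of shifted symmetric functions \emph{without} the divisor factors $(1-q^{\lambda_i})^{-1}$ that the vertex-operator normal ordering forces here, and no Bachmann--K\"uhn ``absorption'' identity reduces general brackets $[s_1,\ldots,s_\ell]$ to quasi-modular forms --- if it did, $\qMZV$ would equal ${\bf QM}$, which it does not. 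Indeed the paper's own intermediate formulas contain genuinely non-quasi-modular series such as $q\frac{\rm d}{{\rm d}q}(Z(3)-Z(2))$, which by \eqref{DZ3} involves $Z(5)$, $Z(3,2)$, $Z(2,3)$; these survive the universal computation and are killed only because their coefficient is $\langle K_X,L\rangle$. Whether the series multiplying $\chi(X)$ and $\langle L_i,L_j\rangle$ are always quasi-modular is exactly the open problem, and the paper settles it in the one accessible case by brute-force identification, not by a structural argument; your proposed remedies (an ${\rm SL}_2(\Z)$-equivariance of the trace, induction on $N$ and on the $k_i$ via $\mathcal W$-algebra relations, bootstrapping from K3 and abelian surfaces) are stated as hopes with the closure of the induction explicitly deferred. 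Finally, the asserted weight bound ``$\sum_i(k_i+2)-\deg(c_J)$'' is not meaningful as stated: the coefficients $c_J$ are intersection numbers carrying no $q$-weight, and the weight bound the paper actually has comes from the equivariant results of Carlsson (Lemma~\ref{kOdd}), not from degree counting in $c_J$.
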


In the region ${\rm Re} \, s > 1$, the Riemann zeta function is defined by 
$$
\zeta(s) = \sum_{n =1}^\infty \frac{1}{n^{s}}.
$$
The integers $s > 1$ give rise to a sequence of special values of the Riemann zeta function.
Multiple zeta values are series of the form
$$
\zeta(s_1, \ldots, s_k) = \sum_{n_1 > \cdots > n_k > 0} 
\frac{1}{n_1^{s_1} \cdots n_k^{s_k}}
$$
where $s_1, \ldots, s_k$ are positive integers with $s_1 > 1$, 
and $n_1, \ldots, n_k$ denote positive integers.
Multiple $q$-zeta values are $q$-deformations of 
$\zeta(s_1, \ldots, s_k)$, which may take different forms 
\cite{Bac, BK2, Bra1, Bra2, OT, Zhao, Zud}. 
The multiple $q$-zeta values defined by Okounkov \cite{Oko} 
are denoted by $Z(s_1, \ldots, s_k)$ where $s_1, \ldots, s_k > 1$ 
are integers (see Definition~\ref{def_qMZV}~(iv)). 
For instance, we have

$$
Z(2) = \sum_{n>0} \frac{q^n}{(1-q^n)^2}, \quad
Z(3) = \sum_{n>0} \frac{q^n(q^n+1)}{(1-q^n)^3}, 
$$
\begin{eqnarray}   \label{exampleZs}
Z(4) = \sum_{n>0} \frac{q^{2n}}{(1-q^n)^4}, \quad
Z(6) = \sum_{n>0} \frac{q^{3n}}{(1-q^n)^6}.
\end{eqnarray}

The weight of $Z(s_1, \ldots, s_k)$ is defined to be 
$s_1 + \ldots + s_k$. 
By \cite[Theorem~2.4]{BK3}, the $\Q$-linear span $\qMZV$ of 
all the multiple $q$-zeta values $Z(s_1, \ldots, s_k)$ with 
$s_1, \ldots, s_k > 1$ is an algebra over $\Q$. 
By ~(\ref{Z246}), the set ${\bf QM}$ of all quasi-modular forms 
(of level $1$ on the full modular group ${\rm PSL}(2; \Z)$) over $\Q$ 
is a subalgebra of $\qMZV$:
$$
{\bf QM} = \Q\big [Z(2), Z(4), Z(6) \big ] \subset \qMZV.
$$

The main result of our paper is the following theorem about $\langle \ch_1^{L_1}\ch_1^{L_2} \rangle'$.

\begin{theorem} \label{intro_corollary_ch1Lch1L}
Let $L_1$ and $L_2$ be line bundles over a smooth projective surface $X$.
If the canonical class of $X$ is numerically trivial, 
then $\langle \ch_1^{L_1}\ch_1^{L_2} \rangle'$ is equal to
$$
\left (\frac72 Z(4) - \frac12 Z(2)^2 + Z(2) \right ) \cdot 
\langle L_1, L_2 \rangle
$$
$$
+ \left (\frac{5}{4} Z(2)^2 + \frac{5}{4} Z(4) - \frac{10}{3} Z(2)^3 
+ 5 Z(2)Z(4) + \frac{35}{6} Z(6) \right ) \cdot \chi(X).
$$
In particular, Qin's Conjecture~\ref{QinConj} holds for 
$\langle \ch_1^{L_1}\ch_1^{L_2} \rangle'$.
\end{theorem}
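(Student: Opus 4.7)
The plan is to work in the Heisenberg Fock space $\fock = \bigoplus_n \Hn$ of Nakajima--Grojnowski and follow the operator-theoretic machinery laid out in \cite{QY}. The first step is to express the tautological Chern character operator $\ch_1^L$ as a linear combination of normally-ordered monomials in the Heisenberg creation/annihilation operators $\fa_n(\alpha)$, with coefficients that are polynomials in $L$, $K_X$, and universal classes on $X$. For $k=1$ only low-order monomials contribute, which keeps the expansion manageable. In the same formalism, I would combine this with the known description of the total Chern class $c(T_\Xn)$, so that the generating series \eqref{OkoChkN.1} is recast as a trace-type quantity on $\fock$ weighted by $q^n$.

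With both ingredients in hand, I would expand the operator product $\ch_1^{L_1}\,\ch_1^{L_2}$ into normally-ordered monomials and evaluate the resulting vacuum expectation against the $c(T_\Xn)$ weight. Each surviving term contributes a sum over partitions that, after collecting part-multiplicities, can be written in the form
$$
\sum_{n>0} n^a \, \frac{q^{mn}}{(1-q^n)^b},
$$
so by comparison with \eqref{exampleZs} each contribution is a $\Q$-linear combination of multiple $q$-zeta values $Z(s_1,\ldots,s_k)$. I would organize the output by the intersection-theoretic coefficient on $X$: terms proportional to $\langle L_1, L_2\rangle$, terms involving $\langle L_i, K_X\rangle$ or $\langle K_X, K_X\rangle$, and terms proportional to $c_2(X)$. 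Under the numerical triviality of $K_X$, every $K_X$-bearing contribution drops out, leaving a clean expression in $\langle L_1, L_2\rangle$ and $\chi(X) = \int_X c_2(X)$.

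The hard part is the bookkeeping and the final identification. The expansion of $\ch_1^{L_1}\ch_1^{L_2}$ produces many normally-ordered monomials, each with its own partition sum, and the specific rational coefficients in the theorem ($7/2$, $-1/2$, $-10/3$, $35/6$, and so on) leave no margin for arithmetic errors. Moreover, the partition sums that appear are not all immediately of the canonical form in \eqref{exampleZs}; some must be reduced using the algebra structure of $\qMZV$ from \cite[Theorem~2.4]{BK3} together with standard identities among $Z(2)$, $Z(4)$, $Z(6)$. Once the explicit formula in the theorem is verified, the quasi-modularity assertion of Qin's Conjecture~\ref{QinConj} for $\langle \ch_1^{L_1}\ch_1^{L_2}\rangle'$ follows immediately from the inclusion ${\bf QM} = \Q[Z(2), Z(4), Z(6)] \subset \qMZV$ recorded in the introduction.
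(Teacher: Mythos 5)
Your outline reproduces the paper's general strategy (decompose $\ch_1(L_i^{[n]})$ into the classes $G_k(\alpha,n)$ via Theorem~\ref{char_th} and \eqref{char_thK=1.0}, rewrite the generating series as a trace against the Carlsson--Okounkov operator $\Wb(\fL_1,z)$ as in Lemma~\ref{FtoW}, sort the output by intersection numbers, and kill the $K_X$-terms), but it has a genuine gap at exactly the point where the real work lies. Your claim that each surviving contribution ``can be written in the form $\sum_{n>0} n^a q^{mn}/(1-q^n)^b$'' is false for the piece $F_{1,1}^{1_X,1_X}(q)$ coming from the product $\fG_1(1_X)\fG_1(1_X)$: there both Chern character operators are cubic in the Heisenberg operators, and the trace produces genuinely coupled multiple sums such as
$$
\sum_{i,j>0}\frac{j(i+j)(q^{i+j}+q^{2i+j})}{(1-q^i)^2(1-q^j)(1-q^{i+j})},
$$
(see \eqref{h11.02}), whose denominators mix $1-q^i$, $1-q^j$ and $1-q^{i+j}$ and which are not visibly elements of $\qMZV$, let alone of ${\bf QM}$. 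Appealing to ``standard identities among $Z(2),Z(4),Z(6)$'' does not resolve this, because the issue is not rearranging known quasi-modular forms but proving that these double and triple sums are quasi-modular in the first place. Since the surviving $\chi(X)$-coefficient in the theorem is precisely such a sum, your method as stated cannot produce the constants $\tfrac54, -\tfrac{10}{3}, 5, \tfrac{35}{6}$.

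The paper closes this gap by a detour you do not mention: it identifies the $\chi(X)$-coefficient of $F_{1,1}^{1_X,1_X}$ with the function $h_{1,1}^{(2)}(q)$ arising as the $m^2$-coefficient of the \emph{equivariant} reduced series $\langle \ch_1\ch_1\rangle'$ on $\C^2$ (Definition~\ref{h11} and Lemma~\ref{EquivCh1Ch1Prop}), invokes Carlsson's theorem (Lemma~\ref{kOdd}~(i), from \cite{Car1, Car2}) to know \emph{a priori} that $h_{1,1}^{(2)}$ is a quasi-modular form of weight at most $6$, and then determines its expression in the basis $1, Z(2), Z(2)^2, Z(4), Z(2)^3, Z(2)Z(4), Z(6)$ by matching finitely many coefficients of the $q$-expansion (Proposition~\ref{h11024}). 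Only the simpler single-sum pieces, such as the $\langle L_1,L_2\rangle$-coefficient $\sum_{n>0}(nq^{2n}+nq^n)/(1-q^n)^3$ from $F_{0,0}^{L_1,L_2}$, are evaluated by the direct comparison with \eqref{exampleZs} that you describe. To repair your argument you would need either this equivariant quasi-modularity input or explicit bi-bracket reductions of the coupled multiple sums; neither is supplied by your proposal.
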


To prove Theorem~\ref{intro_corollary_ch1Lch1L}, 
following ~\cite{Qin1}, we first study the reduced generating series 
$$
  F^{\alpha_1, \ldots, \alpha_N}_{k_1, \ldots, k_N}(q) 
= \sum_{n \ge 0} q^n \int_\Xn \left ( \prod_{i=1}^N 
  G_{k_i}(\alpha_i, n) \right ) c\big (T_\Xn \big )
$$
for cohomology classes $\alpha_1, \ldots, \alpha_N \in H^*(X)$ 
and integers $k_1, \ldots, k_N \ge 0$, 
where $G_{k}(\alpha, n)$ is the homogeneous component in 
$H^{|\alpha|+2k}(\Xn)$ of \eqref{DefOfGGammaN} 
for a homogeneous class $\alpha \in H^*(X)$. 
By Lemma~\ref{FtoW},  
$F^{\alpha_1, \ldots, \alpha_N}_{k_1, \ldots, k_N}(q)$ can be 
expressed in terms of the Ext vertex operators of 
Carlsson and Okounkov \cite{Car1, Car2, CO} and 
the Chern character operators $\mathfrak G_{k_i}(\alpha_i)$ 
which act on $H^*(\Xn)$ by the cup product with the classes
$G_{k_i}(\alpha_i, n)$.  We have

 \begin{eqnarray}   \label{ch1Lch1L.0}
   \langle \ch_1^{L_1} \ch_1^{L_2} \rangle'
= (q;q)_\infty^{\chi(X)} \cdot \big (F_{1,1}^{1_X, 1_X}(q)
  + F_{1,0}^{1_X, L_1}(q) + F_{1,0}^{1_X, L_2}(q)
  + F_{0,0}^{L_1, L_2}(q) \big )
\end{eqnarray}
which is the central part of the final section. We devote the rest of this paper to
explicitly compute and establish Qin's Conjecture~\ref{QinConj}.

\

The paper is organized as follows. Section~\ref{sect_qMZV} is where we give some results about multiple 
$q$-zeta values and quasi-modular forms. Section~\ref{sect_Hilbert} summarizes some basic
 results regarding Hilbert schemes of points on smooth projective surfaces,
 the Heisenberg operators of Grojnowski and Nakajima, and the Chern character operators.
 In Section~\ref{sect_CO}, 
we introduce the generating series 
$F^{\alpha_1, \ldots, \alpha_N}_{k_1, \ldots, k_N}(q)$ 
in terms of the Chern character operators.
Section~\ref{sect_CompEquiv} calculates the reduced series 
 $\langle \ch_1 \ch_1 \rangle'$ in the equivariant setting.
 Theorem~\ref{intro_corollary_ch1Lch1L} (= Theorem~\ref{corollary_ch1Lch1L}) 
is verified in Section~\ref{sect_ch1L1ch1L2}.

\bigskip\noindent
\section{\bf Multiple $q$-zeta values} 
\label{sect_qMZV}

In this section, we will recall basic definitions and facts 
regarding multiple $q$-zeta values and quasi-modular forms.

The following definitions and notations are from \cite{BK1, BK3, Oko}.

\begin{definition}  \label{def_qMZV}
Let $\N = \{1,2,3, \ldots\}$, and fix a subset $S \subset \N$. 
\begin{enumerate}
\item[{\rm (i)}]
Let $Q = \{Q_s(t)\}_{s \in S}$ where each $Q_s(t) \in \Q[t]$ is a polynomial 
with $Q_s(0) = 0$ and $Q_s(1) \ne 0$. For $s_1, \ldots, s_\ell \in S$ 
with $\ell \ge 1$, define
$$
Z_Q(s_1, \ldots, s_\ell)
= \sum_{n_1 > \cdots > n_\ell \ge 1} \prod_{i=1}^\ell 
  \frac{Q_{s_i}(q^{n_i})}{(1-q^{n_i})^{s_i}}
\in \Q[[q]].
$$
Put $Z_Q(\emptyset) = 1$, and define $Z(Q, S)$ to be the $\Q$-linear span 
of the set
$$
\{Z_Q(s_1, \ldots, s_\ell)| \, \ell \ge 0 \text{ and }
s_1, \ldots, s_\ell \in S \}.
$$

\item[{\rm (ii)}]
Define $\mathcal {MD} = Z(Q^E, \N)$ where $Q^E = \{Q_s^E(t)\}_{s \in \N}$
and 
$$
Q_s^E(t) = \frac{tP_{s-1}(t)}{(s-1)!}
$$ 
with $P_s(t)$ being the Eulerian polynomial defined by 
\begin{eqnarray}   \label{def_qMZV.01}
\frac{tP_{s-1}(t)}{(1-t)^s} = \sum_{d=1}^\infty d^{s-1}t^d.
\end{eqnarray}  
We have $t P_{0}(t) = t$.
For $s > 1$, the polynomial $t P_{s-1}(t)$ has degree $s - 1$. 
For $s_1, \ldots, s_\ell \in \N$ with $\ell \ge 1$, 
define the Bachmann-K\" uhn series
\begin{eqnarray}   \label{def_qMZV.02}
[s_1, \ldots, s_\ell] 
= Z_{Q^E}(s_1, \ldots, s_\ell)
= \sum_{n_1 > \cdots > n_\ell \ge 1} \prod_{i=1}^\ell 
  \frac{Q_{s_i}^E(q^{n_i})}{(1-q^{n_i})^{s_i}}.
\end{eqnarray}  

\item[{\rm (iii)}]
Define $\text{q}\mathcal {MD}$ be the subspace of $\mathcal {MD}$ linearly 
spanned by $1$ and all the brackets $[s_1, \ldots, s_\ell]$ with $s_1 > 1$. 

\item[{\rm (iv)}]
Define $\qMZV = Z(Q^O, \N_{>1})$ where 
$Q^O = \{Q_s^O(t)\}_{s \in \N_{>1}}$ and 
$$
  Q_s^O(t) 
= \begin{cases}
  t^{s/2}            &\text{if $s \ge 2$ is even;} \\
  t^{(s-1)/2} (t+1)  &\text{if $s \ge 3$ is odd.}
  \end{cases}
$$ 
For $s_1, \ldots, s_\ell \in \N$ with $\ell \ge 1$, 
define the Okounkov series
$$
Z(s_1, \ldots, s_\ell) = Z_{Q^O}(s_1, \ldots, s_\ell).
$$
\end{enumerate}
\end{definition}

We see from Definition~\ref{def_qMZV}~(ii) that for $s \ge 1$, 
\begin{eqnarray}   \label{def_qMZV.03}
(s-1)! \cdot \frac{Q_{s}^E(t)}{(1-t)^s} = \sum_{d=1}^\infty d^{s-1}t^d.
\end{eqnarray} 
It follows that 
\begin{eqnarray*}   
  (s-1)! \cdot [s] 
= (s-1)! \cdot \sum_{n \ge 1} \frac{Q_{s}^E(q^n)}{(1-q^n)^s} 
= \sum_{n \ge 1} \sum_{d=1}^\infty d^{s-1} q^{nd}
\end{eqnarray*} 
for $s \ge 1$. Therefore, we obtain
\begin{eqnarray}   \label{def_qMZV.04}
  [s] 
= \frac{1}{(s-1)!} \sum_{n, d \ge 1} d^{s-1} q^{nd}
= \frac{1}{(s-1)!} \sum_{d \ge 1} d^{s-1} \frac{q^{d}}{1-q^d}.
\end{eqnarray}  
More generally, for $s_1, \ldots, s_\ell \ge 1$, we have
\begin{eqnarray}   \label{def_qMZV.05}
  [s_1, \ldots, s_\ell] 
= \frac{1}{(s_1-1)! \cdots (s_\ell-1)!} 
  \sum_{\substack{n_1 > \cdots > n_\ell \ge 1\\d_1, \ldots, d_\ell \ge 1}} 
  d_1^{s_1-1} \cdots d_\ell^{s_\ell-1} q^{n_1d_1+\ldots+n_\ell d_\ell}.
\end{eqnarray} 

Some examples of the Bachmann-K\" uhn series $[s], s \ge 1$ are
\begin{eqnarray}   \label{example[s]}
[1] = \sum_{n>0} \frac{q^n}{1-q^n}, \quad
[2] = \sum_{n>0} \frac{nq^n}{1-q^n}, \quad
[3] = \frac12 \sum_{n>0} \frac{n^2 q^n}{1-q^n}.
\end{eqnarray}
Some examples of the Okounkov series $Z(s), s \ge 1$ are given by 
\eqref{exampleZs}.

By the Theorem~2.13 and Theorem~2.14 in \cite{BK1},
$\text{q}\mathcal {MD}$ is a subalgebra of $\mathcal {MD}$, 
and $\mathcal {MD}$ is a polynomial ring over $\text{q}\mathcal {MD}$ 
with indeterminate $[1]$:
\begin{eqnarray}   \label{BK1Thm214}
\mathcal {MD} = \text{q}\mathcal {MD}[\,[1]\,].
\end{eqnarray}
By the Proposition~2.2 and Theorem~2.4 in \cite{BK3}, 
$Z(\{Q_s^E(t)\}_{s \in \N_{>1}}, \N_{>1})$ is a subalgebra of 
$\mathcal {MD}$ as well and 
$\qMZV = Z(\{Q_s^E(t)\}_{s \in \N_{>1}}, \N_{>1})$. Therefore,   
\begin{eqnarray}   \label{BK3-2.4}
\qMZV = Z(\{Q_s^E(t)\}_{s \in \N_{>1}}, \N_{>1}) \subset \qMD \subset \MD
\end{eqnarray} 
are inclusions of $\Q$-algebras.  
For instance, by \cite[Example~2.6]{BK3}, 
\begin{eqnarray}   \label{BK3-2.6}
Z(2) = [2], \quad Z(3) = 2[3], \quad Z(4) = [4] - \frac16 [2].
\end{eqnarray}

The graded ring ${\bf QM}$ of quasi-modular forms (of level $1$  
on the full modular group ${\rm PSL}(2; \Z)$) over $\Q$ is 
the polynomial ring over $\Q$ generated by the Eisenstein series 
$G_2(q), G_4(q)$ and $G_6(q)$:
$$
{\bf QM} = \Q[G_2, G_4, G_6] = {\bf M}[G_2]
$$
where ${\bf M} = \Q[G_4, G_6]$ is the graded ring of modular forms 
(of level $1$) over $\Q$, and 
\begin{eqnarray*}    
G_{2k} = G_{2k}(q) =\frac{1}{(2k-1)!} \cdot 
\left (-\frac{B_{2k}}{4k} + \sum_{n \ge 1} 
\Big ( \sum_{d|n} d^{2k-1} \Big )q^n \right )
\end{eqnarray*}
where $B_i \in \Q, i \ge 2$ are the Bernoulli numbers defined by 
$$
\frac{t}{e^t - 1} = 1 - \frac{t}{2} + \sum_{i =2}^{+\infty} B_i \cdot \frac{t^i}{i!}.
$$
The grading is to assign $G_2, G_4, G_6$ weights $2, 4, 6$ respectively. 
By \cite[p.6]{BK3}, 
\begin{eqnarray*}
G_2 &=& -\frac{1}{24} + Z(2),      \\
G_4 &=& \frac{1}{1440} + Z(2) + \frac{1}{6}Z(4),     \\
G_6 &=& -\frac{1}{60480} + \frac{1}{120} Z(2) + \frac{1}{4} Z(4) + Z(6).
\end{eqnarray*}
It follows that
\begin{eqnarray}  \label{Z246} 
{\bf QM} = \Q[Z(2), Z(4), Z(6)].
\end{eqnarray}
\section{\bf Hilbert schemes of points on surfaces} 
\label{sect_Hilbert}

Let $X$ be a smooth projective complex surface,
and $\Xn$ be the Hilbert scheme of $n$ points in $X$. 
An element in $\Xn$ is represented by a
length-$n$ $0$-dimensional closed subscheme $\xi$ of $X$. For $\xi
\in \Xn$, let $I_{\xi}$ be the corresponding sheaf of ideals. It
is well known that $\Xn$ is smooth. 
Define the universal codimension-$2$ subscheme:
\begin{eqnarray*}
{ \mathcal Z}_n=\{(\xi, x) \subset \Xn\times X \, | \, x\in
 {\rm Supp}{(\xi)}\}\subset \Xn\times X.
\end{eqnarray*}
Denote by $p_1$ and $p_2$ the projections of $\Xn \times X$ to
$\Xn$ and $X$ respectively. A line bundle $L$ on $X$ induces 
the tautological rank-$n$ bundle $\Ln$ over $\Xn$:
\begin{eqnarray}    \label{L[n]}
\Ln = (p_{1}|_{\mathcal Z_n})_*\big (p_2^*L|_{\mathcal Z_n} \big ).
\end{eqnarray}
Let
\begin{eqnarray*}
\fock = \bigoplus_{n=0}^\infty \Hn
\end{eqnarray*}
be the direct sum of total cohomology groups of the Hilbert schemes $\Xn$.
For $m \ge 0$ and $n > 0$, let $Q^{[m,m]} = \emptyset$ and define
$Q^{[m+n,m]}$ to be the closed subset:
$$\{ (\xi, x, \eta) \in X^{[m+n]} \times X \times X^{[m]} \, | \,
\xi \supset \eta \text{ and } \mbox{Supp}(I_\eta/I_\xi) = \{ x \}
\}.$$

We recall Nakajima's definition of the Heisenberg operators \cite{Nak}. 
Let $\alpha \in H^*(X)$. Set $\mathfrak a_0(\alpha) =0$.
For $n > 0$, the operator $\mathfrak
a_{-n}(\alpha) \in \End(\fock)$ is
defined by
$$
\mathfrak a_{-n}(\alpha)(a) = \w{p}_{1*}([Q^{[m+n,m]}] \cdot
\w{\rho}^*\alpha \cdot \w{p}_2^*a)
$$
for $a \in H^*(X^{[m]})$, where $\w{p}_1, \w{\rho},
\w{p}_2$ are the projections of $X^{[m+n]} \times X \times
X^{[m]}$ to $X^{[m+n]}, X, X^{[m]}$ respectively. Define
$\mathfrak a_{n}(\alpha) \in \End(\fock)$ to be $(-1)^n$ times the
operator obtained from the definition of $\mathfrak
a_{-n}(\alpha)$ by switching the roles of $\w{p}_1$ and $\w{p}_2$. 
We often refer to $\mathfrak a_{-n}(\alpha)$ (resp. $\mathfrak a_n(\alpha)$) 
as the {\em creation} (resp. {\em annihilation})~operator. 
The following is from \cite{Nak, Gro}. Our convention of the sign follows \cite{LQW2}.

\begin{theorem} \label{commutator}
The operators $\mathfrak a_n(\alpha)$ satisfy
the commutation relation:
\begin{eqnarray*}
\displaystyle{[\mathfrak a_m(\alpha), \mathfrak a_n(\beta)] = -m
\; \delta_{m,-n} \cdot \langle \alpha, \beta \rangle \cdot {\rm Id}_{\fock}}.
\end{eqnarray*}
The space $\fock$ is an irreducible module over the Heisenberg
algebra generated by the operators $\mathfrak a_n(\alpha)$ with a
highest~weight~vector $\vac=1 \in H^0(X^{[0]}) \cong \C$.
\end{theorem}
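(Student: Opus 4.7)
The plan is to follow the geometric arguments of Nakajima and Grojnowski, analyzing the composition $\mathfrak a_m(\alpha) \circ \mathfrak a_n(\beta)$ as a pullback--pushforward along explicit correspondences built from the incidence varieties $Q^{[k+\ell,k]}$ and extracting the commutator from the difference of the two orderings. The vanishing of $[\mathfrak a_m(\alpha), \mathfrak a_n(\beta)]$ when $m$ and $n$ have the same sign comes first. For two creation operators, the iterated correspondence $Q^{[k+|m|+|n|, k+|m|]} \times_{X^{[k+|m|]}} Q^{[k+|m|, k]}$ parametrizes a pair of successive length-increments, and swapping the two marked points provides a symmetry which intertwines the two orderings, yielding commutativity when paired against $\alpha$ and $\beta$. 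The case of two annihilation operators follows by taking adjoints with respect to the Poincar\'e pairing, using that $\mathfrak a_n(\alpha)$ is, up to the standard sign, the transpose of $\mathfrak a_{-n}(\alpha)$.

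The main computation is the mixed case $[\mathfrak a_m(\alpha), \mathfrak a_{-n}(\beta)]$ with $m, n > 0$. Applied to $a \in H^*(X^{[k]})$, the composite $\mathfrak a_m(\alpha)\mathfrak a_{-n}(\beta) a$ factors through $X^{[k+n]}$, whereas the reverse order factors through $X^{[k-m]}$. Both fiber products embed into $X^{[k+n-m]} \times X \times X \times X^{[k]}$, and away from the partial diagonal $\{x=y\}$ on the middle $X \times X$ factor the two loci are canonically isomorphic through an interchange of the creation and annihilation points, so the difference is supported on this diagonal. A local model, using the description of $Q^{[\ell+1,\ell]}$ as a $\mathbb{P}^1$-bundle over a universal family near generic points, shows that the diagonal contribution is nonzero only when $m=n$, and an excess intersection computation on a two-step nested Hilbert scheme produces exactly the factor $-m\,\delta_{m,-n}\langle \alpha, \beta \rangle \cdot {\rm Id}_{\fock}$, with the sign dictated by Nakajima's convention for $\mathfrak a_n$.

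For the Fock-space statement, the vacuum $\vac \in H^0(X^{[0]}) \cong \C$ is annihilated by every $\mathfrak a_n(\alpha)$ with $n>0$ for trivial dimensional reasons, since the target $H^*(X^{[-n]})$ is zero. I would then show that the monomials $\mathfrak a_{-\lambda_1}(e_{i_1}) \cdots \mathfrak a_{-\lambda_\ell}(e_{i_\ell}) \vac$, indexed by an assignment of a partition to each basis vector $e_i$ of $H^*(X)$, are linearly independent: the Gram matrix in the Poincar\'e pairing is block-diagonal, and its blocks are computed directly from the commutator established above, hence nondegenerate. Comparing the generating function for these monomials with G\"ottsche's formula $\sum q^n \dim H^*(X^{[n]}) = (q;q)_\infty^{-\chi(X)}$ shows that they also span $\fock$. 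Irreducibility of $\fock$ as a module over the Heisenberg algebra then follows from the usual Fock-representation argument, since $\vac$ is a cyclic highest-weight vector.

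The main obstacle is the local analysis of the nested correspondence near the partial diagonal in the mixed case, where one must justify the $-m$ factor including its sign. The cleanest route is to reduce to an equivariant computation on $(\C^2)^{[\ell]}$ with $\alpha = \beta = 1_X$, where the torus-fixed points are indexed by partitions and the excess contributions can be evaluated by localization, and then invoke deformation invariance of the correspondence cycle to transport the result to general $X$ and arbitrary classes $\alpha, \beta$.
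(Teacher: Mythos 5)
The paper does not actually prove this theorem: it is quoted from Nakajima and Grojnowski (``The following is from [Nak, Gro]. Our convention of the sign follows [LQW2].''), so there is no in-paper argument to compare yours against. Your outline is essentially the standard proof from those original sources: same-sign commutators vanish by analyzing the composite correspondences, the mixed commutator is supported on the diagonal and is a universal multiple of $\langle\alpha,\beta\rangle\cdot{\rm Id}$, the constant is determined by a local computation, and irreducibility follows by comparing the character of the cyclic Heisenberg submodule generated by $\vac$ with G\"ottsche's formula. Two places where your sketch is thinner than the actual arguments deserve flagging. First, in the same-sign case the ``swap the two marked points'' symmetry is not by itself enough: the intersections defining the composites $\mathfrak a_{-m}(\alpha)\mathfrak a_{-n}(\beta)$ need not be proper, and the real content is the dimension estimate showing the composite is represented by a well-defined cycle on an irreducible support where the symmetry then applies. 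Second, the determination of the universal constant $-m$ (rather than just ``some nonzero multiple of $\delta_{m,-n}$'') is the genuinely hard step, historically settled by Ellingsrud--Str{\o}mme and by Nakajima's Euler-characteristic argument against G\"ottsche's formula; your proposed route via equivariant localization on $(\C^2)^{[\ell]}$ plus deformation invariance is viable and is how some modern treatments proceed, but it is precisely the part that would need to be carried out in full detail, including the justification that the coefficient of the diagonal is computed \'etale-locally and hence is independent of $X$.
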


The Lie bracket in the above theorem is understood in the super
sense according to the parity of the degrees of the
cohomology classes involved. It follows from
Theorem~\ref{commutator} that the space $\fock$ is linearly
spanned by all the Heisenberg monomials $\mathfrak
a_{n_1}(\alpha_1) \cdots \mathfrak a_{n_k}(\alpha_k) \vac$
where $k \ge 0$ and $n_1, \ldots, n_k < 0$.

\begin{definition} \label{partition}
\begin{enumerate}
\item[{\rm (i)}]
A {\it generalized partition} \index{partition, generalized} of an integer $n$ is of the form
$$
\lambda = (\cdots  (-2)^{m_{-2}}(-1)^{m_{-1}} 1^{m_1}2^{m_2} \cdots)
$$ 
such that part $i\in \Z$ has multiplicity $m_i$, $m_i \ne 0$ for 
only finitely many $i$'s, and $n = \sum_i i m_i$. Define 
$\ell(\la) = \sum_i m_i$, $|\la| = \sum_i im_i$, 
$|\la|_+ = \sum_{i > 0} im_i$, $s(\la) = \sum_i i^2m_i$,
and $\lambda^! = \prod_i m_i!$.

\item[{\rm (ii)}] 
Let $\alpha \in H^*(X)$ and $k \ge 1$. Define $\tau_{k*}: H^*(X) \to H^*(X^k)$ 
to be the linear map induced by the diagonal embedding $\tau_k: X \to X^k$, and
$$
(\mathfrak a_{m_1} \cdots \mathfrak a_{m_k})(\alpha)
= \mathfrak a_{m_1} \cdots \mathfrak a_{m_k}(\tau_{*k}\alpha)
= \sum_j \mathfrak a_{m_1}(\alpha_{j,1}) \cdots \mathfrak a_{m_k}(\alpha_{j,k})
$$ 
when $\tau_{k*}\alpha = \sum_j \alpha_{j,1} \otimes \cdots 
\otimes \alpha_{j, k}$ via the K\"unneth decomposition of $H^*(X^k)$.

\item[{\rm (iii)}]
For a generalized partition $\lambda = 
\big (\cdots (-2)^{m_{-2}}(-1)^{m_{-1}} 1^{m_1}2^{m_2} \cdots \big )$, define
\begin{eqnarray*}
\mathfrak a_{\lambda}(\alpha) = \left ( \prod_i (\mathfrak
a_i)^{m_i} \right ) (\alpha)
\end{eqnarray*}
where the product $\prod_i (\mathfrak
a_i)^{m_i} $ is understood to be
$\cdots \mathfrak a_{-2}^{m_{-2}} \mathfrak a_{-1}^{m_{-1}}
 \mathfrak a_{1}^{m_{1}} \mathfrak a_{2}^{m_{2}}\cdots$.
\end{enumerate}
\end{definition}

By \cite[Lemma~3.12~(i)]{Qin1}, the commutator 
$[\mathfrak a_{n_1} \cdots \mathfrak a_{n_{k}} (\alpha),
\mathfrak a_{m_1} \cdots \mathfrak a_{m_{s}}(\beta)]$ is equal to
\begin{eqnarray}    \label{QinLemma3.12(i)}
-\sum_{t=1}^k \sum_{j=1}^s n_t \delta_{n_t, -m_j} \cdot
\left ( \prod_{\ell=1}^{j-1} \mathfrak a_{m_\ell} \cdot
\prod_{1 \le u \le k, u \ne t} \mathfrak a_{n_u} \cdot
\prod_{\ell=j+1}^{s} \mathfrak a_{m_\ell} \right )(\alpha\beta).
\end{eqnarray}

For $n > 0$ and a homogeneous class $\alpha \in H^*(X)$, let
$|\alpha| = s$ if $\alpha \in H^s(X)$, and let $G_k(\alpha, n)$ be
the homogeneous component in $H^{|\alpha|+2k}(\Xn)$ of
\begin{eqnarray}    \label{DefOfGGammaN}
 G(\alpha, n) = p_{1*}(\ch({\mathcal O}_{{\mathcal Z}_n}) \cdot p_2^*\alpha
\cdot p_2^*{\rm td}(X) ) \in \Hn
\end{eqnarray}
where $\ch({\mathcal O}_{{\mathcal Z}_n})$ denotes the Chern
character of ${\mathcal O}_{{\mathcal Z}_n}$
and ${\rm td}(X)$ denotes the Todd class. We extend the notion $G_k(\alpha,
n)$ linearly to an arbitrary class $\alpha \in H^*(X)$, 
and set $G(\alpha, 0) =0$. 
It was proved in \cite{LQW1} that the cohomology ring of $\Xn$ is
generated by the classes $G_{k}(\alpha, n)$ where $0 \le k < n$
and $\alpha$ runs over a linear basis of $H^*(X)$. 
The {\it Chern character operator} ${\mathfrak G}_k(\alpha) \in
\End({\fock})$ is the operator acting on $H^*(\Xn)$ by the cup product with $G_k(\alpha, n)$. 

The following result is from \cite{LQW2} (see also \cite{Qin1}).

\begin{theorem} \label{char_th}
Let $k \ge 0$ and $\alpha, \beta \in H^*(X)$. 
Then, $\mathfrak G_k(\alpha)$ is equal to
\begin{eqnarray*}
& &- \sum_{\ell(\lambda) = k+2, |\lambda|=0}
   \frac{1}{\lambda^!} \mathfrak a_{\lambda}(\alpha)
   + \sum_{\ell(\lambda) = k, |\lambda|=0}
   \frac{\lambsq - 2}{24\lambda^!}
   \mathfrak a_{\lambda}(e_X \alpha)  \\
&+&\sum_{\ell(\lambda) = k+1, |\lambda|=0} \frac{g_{1, \lambda}}{\la^!}
   \mathfrak a_{\lambda}(K_X \alpha)
   + \sum_{\ell(\lambda) = k, |\lambda|=0} \frac{g_{2, \lambda}}{\la^!}
   \mathfrak a_{\lambda}(K_X^2 \alpha)
\end{eqnarray*}
where all the numbers $g_{1, \lambda}$ and $g_{2, \lambda}$ are 
independent of $X$ and $\alpha$. 
\end{theorem}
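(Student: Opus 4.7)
The theorem describes the Heisenberg expansion of the Chern character operator $\mathfrak{G}_k(\alpha)$, and the plan is to reduce it to a combination of bidegree counting, a universality argument, and induction on $k$, following the strategy of \cite{LQW2}.

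\textbf{Step 1 (Degree and selection constraints).} First I would pin down which Heisenberg monomials $\mathfrak{a}_\lambda(\gamma)$ can possibly occur. Since $\mathfrak{G}_k(\alpha)$ acts by cup product with $G_k(\alpha,n) \in H^{|\alpha|+2k}(\Xn)$, it preserves the $n$-grading on $\fock$, which forces $|\lambda|=0$. A bidegree computation shows that $\mathfrak{a}_\lambda(\gamma)$ built from $\tau_{\ell(\lambda)*}$ of a class of cohomological degree $r_0$ on $X$ shifts cohomological degree by $r_0 + 2\ell(\lambda) - 4$. Matching this to $|\alpha|+2k$, and restricting the input class to the natural span generated by $\alpha$ and the characteristic classes of $X$ (namely $\alpha, e_X\alpha, K_X\alpha, K_X^2\alpha$), the only possible contributions have $\ell(\lambda) \in \{k+2,\, k,\, k+1,\, k\}$, which are exactly the four sums appearing in the theorem statement.

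\textbf{Step 2 (Induction on $k$ via a boundary commutator).} Next I would run induction on $k \ge 0$. The base case $k=0$ is handled by directly expanding $G(\alpha, n)$ using the low-degree part of $\ch(\mathcal O_{\mathcal Z_n})$ together with the Todd expansion $\text{td}(X) = 1 + K_X/2 + (K_X^2 + e_X)/12$. For the inductive step I would compute the commutator of $\mathfrak{G}_k(\alpha)$ with a transfer operator such as $\mathfrak{a}_{-1}(1_X)$. Using the geometric definition of $G_k(\alpha,n)$ and the Heisenberg commutator formula \eqref{QinLemma3.12(i)}, this commutator has, on the geometric side, a natural expression in terms of $\mathfrak{G}_{k-1}$ applied to modified classes plus explicit normal-ordering corrections; on the algebraic side, expanding by the inductive hypothesis yields a linear system that determines the coefficients in $\mathfrak{G}_k(\alpha)$ in terms of those for lower $k$. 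This bootstrap produces the expansion inductively.

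\textbf{Step 3 (Explicit coefficients and universality).} The leading coefficient $-1/\lambda^!$ is extracted from the top-degree contribution of $\ch(\mathcal O_{\mathcal Z_n})$ restricted to the open stratum where the points are distinct, where $\mathcal Z_n$ splits locally into a disjoint union and the multiplicities from the symmetric-group quotient produce the $\lambda^!$ in the denominator. The coefficient $(s(\lambda)-2)/(24\lambda^!)$ for the $e_X\alpha$ term collects two effects: the $c_2(X)/12 = e_X/12$ contribution from $\text{td}(X)$, and a normal-ordering self-commutator correction that introduces the $s(\lambda)$ factor arising from summing squares of the parts. For the two $K_X$-terms I would invoke universality: the construction of $G_k(\alpha,n)$ and of the Heisenberg operators is natural in $X$, so each Heisenberg coefficient is forced to be a universal combinatorial quantity depending only on $\lambda$, not on $X$ or $\alpha$, and these numbers could be pinned down by comparison on test surfaces (though for our application to Qin's conjecture with $K_X$ numerically trivial they will drop out).

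The main obstacle is ruling out any tensor-input class beyond $\alpha, e_X\alpha, K_X\alpha, K_X^2\alpha$. A careful dimension count on the Chern character of $\mathcal O_{\mathcal Z_n}$ combined with GRR is needed to show that no higher characteristic classes of $T_X$, no cross terms like $K_X e_X\alpha$, and no other universal classes can contribute. Once this span is pinned down, the remaining work of extracting the explicit $-1/\lambda^!$ and $(s(\lambda)-2)/(24\lambda^!)$ coefficients from the combinatorics of the K\"unneth decomposition of $\tau_{k*}\alpha$, the signs from super-commutators of Heisenberg operators, and the Todd-class contributions is intricate but essentially mechanical.
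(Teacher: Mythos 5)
The paper does not actually prove Theorem~\ref{char_th}; it imports the statement verbatim from \cite{LQW2} (see also \cite{Qin1}), so there is no in-paper argument to compare yours against. Measured against the proof in \cite{LQW2}, your outline points in the right direction: the bidegree bookkeeping in Step~1 is correct (it reproduces the bi-degree formula $\big(|\mu|, 2(\ell(\mu)-2)+|\beta|\big)$ recorded as \eqref{20161207.1} in this paper, and correctly forces $\ell(\la)\in\{k,k+1,k+2\}$ for the four admissible input classes $\alpha$, $K_X\alpha$, $K_X^2\alpha$, $e_X\alpha$), and an induction driven by commutators with Heisenberg generators is indeed the engine of the argument in \cite{LQW1, LQW2}.

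As a proof, however, your proposal has two genuine gaps. First, knowing the commutators $[\mathfrak G_k(\alpha), \mathfrak a_{-1}(\beta)]$ for all $\beta$ does not by itself ``yield a linear system that determines the coefficients'': an operator on $\fock$ is pinned down by its commutators with the creation operators only after one also evaluates it on the vacuum and invokes the irreducibility of $\fock$ as a Heisenberg module (Theorem~\ref{commutator}); this step is missing from your Step~2 and is essential, since without it two operators with identical commutators could still differ. Second, and more seriously, the assertion that the commutator ``has, on the geometric side, a natural expression in terms of $\mathfrak G_{k-1}$ applied to modified classes plus explicit normal-ordering corrections'' is precisely the hard geometric theorem that must be proved: it requires analyzing $\ch(\mathcal O_{\mathcal Z_n})$ restricted to the incidence varieties $Q^{[m+n,m]}$, and it is where the coefficients $-1/\lambda^!$ and $(\lambsq-2)/(24\lambda^!)$ actually come from. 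Your Step~3 offers heuristics for these coefficients (restriction to the locus of distinct points, the Todd expansion), but the closed strata contribute nontrivially to the subleading terms, so the leading-term heuristic does not yield the $e_X$-coefficient. In short, the skeleton matches the literature, but the load-bearing computations are named rather than performed; for the purposes of this paper the correct move is simply to cite \cite{LQW2}, as the author does.
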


It follows that 
\begin{eqnarray}  \label{fG0L}
\fG_0(L) = -\sum_{m > 0} \fa_{-m}\fa_m(L). 
\end{eqnarray}
Moreover, by \cite[Proposition~4.8]{Qin1}, we have
\begin{eqnarray}  \label{char_thK=1.0}
 \mathfrak G_1(\alpha) 
= - \sum_{\ell(\lambda) = 3, |\lambda|=0}
   \frac{1}{\lambda^!} \mathfrak a_{\lambda}(\alpha)
  + \sum_{n>0} \frac{1-n}2 \mathfrak a_{-n}\mathfrak a_n (K_X \alpha).
\end{eqnarray}

\section{\bf The generating series 
$F^{\alpha_1, \ldots, \alpha_N}_{k_1, \ldots, k_N}(q)$} 
\label{sect_CO}

In this section, we will define the generating series 
$F^{\alpha_1, \ldots, \alpha_N}_{k_1, \ldots, k_N}(q)$ in terms of 
the Chern character operators,
and rewrite it by using the Ext vertex operators constructed 
by Carlsson and Okounkov \cite{CO, Car1, Car2}.

Let $L$ be a line bundle over the smooth projective surface $X$. 
Let $\mathbb E_L$ be the virtual vector bundle on $X^{[k]} \times X^{[\ell]}$
whose fiber at $(I, J) \in X^{[k]} \times X^{[\ell]}$ is given by
$$
\mathbb E_L|_{(I,J)} = \chi(\mathcal O, L) - \chi(J, I \otimes L).
$$
Let $\fL_m$ be the trivial line bundle on $X$ with a scaling action of $\C^*$ of 
character $z^m$, and let $\Delta_n$ be the diagonal in $\Xn \times \Xn$. Then, 
\begin{eqnarray}   \label{ResOfEToD}
\mathbb E_{\fL_m}|_{\Delta_n} = T_{\Xn, m},
\end{eqnarray}
the tangent bundle $T_\Xn$ with a scaling action of $\C^*$ of character $z^m$. 
By abusing notations, we also use $L$ to denote its first Chern class. Put
\begin{eqnarray}   
   \Gamma_{\pm}(z) 
&=&\exp \left ( \sum_{n>0} \frac{z^{\mp n}}{n} \fa_{\pm n} \right ), 
        \label{Gammaz}   \\
   \Gamma_{\pm}(L, z) 
&=&\exp \left ( \sum_{n>0} \frac{z^{\mp n}}{n} \fa_{\pm n}(L) \right ).   
        \label{GammaLz}
\end{eqnarray}

\begin{remark}  \label{SignDiff}
There is a sign difference between the Heisenberg commutation relations used in 
\cite{Car1} (see p.3 there) and in this paper (see Theorem~\ref{commutator}).
So for $n > 0$, our Heisenberg operators $\fa_{-n}(L)$ and $\fa_{n}(-L)$
are equal to the Heisenberg operators $\fa_{-n}(L)$ and $\fa_{n}(L)$ in \cite{Car1}.
Accordingly, our operators $\Gamma_-(L, z)$ and $\Gamma_+(-L, z)$ are equal to
the operators $\Gamma_-(L, z)$ and $\Gamma_+(L, z)$ in \cite{Car1}.
\end{remark}

The following commutation relations can be found in \cite{Car1} (see Remark~\ref{SignDiff}):
\begin{eqnarray}   
&[\Gamma_+(L, x), \Gamma_+(L', y)] = [\Gamma_-(L, x), \Gamma_-(L', y)] = 0,&
                                           \label{CarLemma5.1}          \\
&\Gamma_+(L, x)\Gamma_-(L', y) = (1-yx^{-1})^{\langle L, L' \rangle} \,
    \Gamma_-(L', y) \Gamma_+(L, x).&     \label{CarLemma5.2}
\end{eqnarray}

Let $\Wb(L, z): \fock \to \fock$ be the vertex operator constructed in \cite{CO, Car1} 
where $z$ is a formal variable. By \cite{Car1}, $\Wb(L, z)$ is defined via the pairing
\begin{eqnarray}   \label{def-WLz}
  \langle \Wb(L, z) \eta, \xi \rangle 
= z^{\ell-k} \cdot \int_{X^{[k]} \times X^{[\ell]}}
  (\eta \otimes \xi) \, c_{k+\ell}(\mathbb E_L)
\end{eqnarray}
for $\eta \in H^*(X^{[k]})$ and $\xi \in H^*(X^{[\ell]})$.
The main result in \cite{Car1} is (see Remark~\ref{SignDiff}):
\begin{eqnarray}   \label{WLz}
\Wb(L, z) = \Gamma_-(L-K_X, z) \, \Gamma_+(-L, z).
\end{eqnarray}

\begin{definition}
For $\alpha_1, \ldots, \alpha_N \in H^*(X)$ and 
integers $k_1, \ldots, k_N \ge 0$, define 
\begin{eqnarray}    \label{F-generating}
  F^{\alpha_1, \ldots, \alpha_N}_{k_1, \ldots, k_N}(q) 
= \sum_{n \ge 0} q^n \int_\Xn \left ( \prod_{i=1}^N 
  G_{k_i}(\alpha_i, n) \right ) c\big (T_\Xn \big )
\end{eqnarray}
where $c\big (T_\Xn \big )$ is the total Chern class of 
the tangent bundle $T_\Xn$ of $\Xn$. 
\end{definition}

By G\" ottsche's Theorem in \cite{Got}, we have
\begin{eqnarray}    \label{F-generating.1}
  F(q) 
= \sum_{n \ge 0} q^n \, \int_\Xn c(T_\Xn) 
= \sum_{n \ge 0} \chi(\Xn) q^n
= (q; q)_\infty^{-\chi(X)}. 
\end{eqnarray}
Inspired by \cite{Oko}, we define the reduced series
$$
(q; q)_\infty^{\chi(X)} \cdot F^{\alpha_1, \ldots, \alpha_N}_{k_1, \ldots, k_N}(q)
= \frac{F^{\alpha_1, \ldots, \alpha_N}_{k_1, \ldots, k_N}(q)}{F(q)}.
$$
The series $F^{\alpha_1, \ldots, \alpha_N}_{k_1, \ldots, k_N}(q)$
has been studied in \cite{QY}.
The following is \cite[Lemma~3.2]{QY}.

\begin{lemma}  \label{FtoW}
Let $\fn$ be the number-of-points operator, i.e., $\fn|_{H^*(\Xn)} = n \, {\rm Id}$. Then,
\begin{eqnarray}   \label{FtoW.0}
  F^{\alpha_1, \ldots, \alpha_N}_{k_1, \ldots, k_N}(q) 
= \Tr \, q^\fn \, \Wb(\fL_1, z) \, \prod_{i=1}^N \fG_{k_i}(\alpha_i)
\end{eqnarray}
where $\fL_1$ is the trivial line bundle on $X$ with a scaling action of 
$\C^*$ of character $z$.
\end{lemma}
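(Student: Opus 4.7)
The plan is to unfold the trace on the right-hand side by choosing, for each $n$, a homogeneous basis $\{\eta_a\}$ of $H^*(\Xn)$ together with its Poincar\'e-dual basis $\{\eta^a\}$, and then to apply the defining formula \eqref{def-WLz} of the vertex operator $\Wb(\fL_1,z)$ to convert the trace into an integral over $\Xn \times \Xn$. The diagonal class is then represented K\"unneth-style by $\sum_a \eta_a \otimes \eta^a = [\Delta_n]$, and the relation \eqref{ResOfEToD} converts the restriction of $\mathbb E_{\fL_1}$ to $\Delta_n$ into the $\C^*$-equivariant tangent bundle $T_{\Xn,1}$, whose top equivariant Chern class collapses to the total non-equivariant class $c(T_\Xn)$.

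More explicitly, decomposing $\fock = \bigoplus_n H^*(\Xn)$ reduces the trace block-by-block; each block contributes
$$
\Tr\big|_{H^*(\Xn)}\!\big(\Wb(\fL_1,z)\textstyle\prod_i \fG_{k_i}(\alpha_i)\big) = \sum_a \big\langle \Wb(\fL_1,z)\big(\textstyle\prod_i G_{k_i}(\alpha_i,n) \cup \eta_a\big),\, \eta^a \big\rangle,
$$
where I use that each $\fG_{k_i}(\alpha_i)$ acts on $H^*(\Xn)$ by cup product with $G_{k_i}(\alpha_i,n)$. Setting $k=\ell=n$ in \eqref{def-WLz} (so that the $z^{\ell-k}$ prefactor trivializes) recasts each summand as an integral over $\Xn \times \Xn$ of $\big(\prod_i G_{k_i}(\alpha_i,n) \cup \eta_a\big)\otimes \eta^a$ against $c_{2n}(\mathbb E_{\fL_1})$. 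Summing over the basis absorbs $\sum_a \eta_a \otimes \eta^a$ into the Poincar\'e dual of the diagonal; pushing along $\Delta_n \hookrightarrow \Xn \times \Xn$ then collapses the expression to an integral over a single copy of $\Xn$.

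The identification \eqref{ResOfEToD} supplies $c_{2n}(\mathbb E_{\fL_1})|_{\Delta_n} = c_{2n}(T_{\Xn,1})$. Writing the equivariant Chern roots of $T_{\Xn,1}$ as $x_j + z$, one has $c_{2n}(T_{\Xn,1}) = \sum_k z^{2n-k} c_k(T_\Xn)$; by cohomological degree inside $\Xn$, only the single summand with $k = 2n - \sum_i(k_i + |\alpha_i|/2)$ contributes to the integral against $\prod_i G_{k_i}(\alpha_i,n)$, and the corresponding $z$-exponent is an overall constant independent of $n$. Under the convention identifying the top equivariant Chern class with the total non-equivariant Chern class (equivalently, specializing at $z=1$) this recovers $\int_\Xn \prod_i G_{k_i}(\alpha_i,n)\, c(T_\Xn)$. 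Weighting by $q^n$ and summing over $n$ then yields exactly $F^{\alpha_1,\ldots,\alpha_N}_{k_1,\ldots,k_N}(q)$ as in \eqref{F-generating}.

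The main obstacle will be the careful bookkeeping of signs arising from the super-graded Poincar\'e pairing in the identity $\sum_a \eta_a \otimes \eta^a = [\Delta_n]$, together with the precise handling of the $z$-tracking in \eqref{def-WLz}: one must check that the single $z$-monomial surviving inside each $q^n$-coefficient is absorbed consistently by the trace convention, so that the equivariant top Chern class genuinely reduces to the total non-equivariant Chern class of $T_\Xn$ appearing in the definition of $F$. Once these sign and $z$-specialization choices are pinned down, the remainder of the argument is a routine combination of Poincar\'e duality, formula \eqref{def-WLz}, and the diagonal identification \eqref{ResOfEToD}.
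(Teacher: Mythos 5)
Your argument is correct and is essentially the proof of this statement: the paper itself does not reprove the lemma but quotes it as Lemma~3.2 of \cite{QY}, and the proof there proceeds exactly as you describe --- expand the trace block-by-block over a Poincar\'e-dual basis, apply the defining pairing \eqref{def-WLz} with $k=\ell=n$, absorb $\sum_a \eta_a\otimes\eta^a$ into the diagonal class, and use \eqref{ResOfEToD} together with the specialization $t=1$ (cf.\ Remark~\ref{RMK_FtoW}~(i)) to turn $c_{2n}(\mathbb E_{\fL_1})|_{\Delta_n}$ into $c(T_\Xn)$. The sign bookkeeping for odd cohomology and the $z$-versus-$t$ specialization that you flag are indeed the only delicate points, and they resolve exactly as you anticipate.
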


\begin{remark}   \label{RMK_FtoW}
\begin{enumerate}
\item[{\rm (i)}]
In \eqref{FtoW.0}, we have implicitly set $t = 1$ for 
the generator $t$ of the equivariant cohomology $H^*_{\C^*}({\rm pt})$ 
of a point.

\item[{\rm (ii)}]
Let $1_X, K_X, e_X$ be the fundamental class, canonical divisor, 
Euler class of the surface $X$ respectively. 
By \eqref{FtoW.0} and Theorem~\ref{char_th}, 
$F^{\alpha_1, \ldots, \alpha_N}_{k_1, \ldots, k_N}(q)$ is 
an infinite linear combination of the expressions
\begin{eqnarray}   \label{epsilonialphai}
\Tr \, q^\fn \, {\bf W}(\fL_1, z) \, \prod_{i=1}^N 
     \frac{\fa_{\la^{(i)}}(\epsilon_i \alpha_i)}{\la^{(i)!}}
\end{eqnarray}
where $\epsilon_i \in \{1_X, e_X, K_X, K_X^2\}$ 
and $\la^{(1)}, \ldots, \la^{(N)}$ are generalized partitions satisfying 
$|\la^{(i)}| = 0$ and $\ell(\la^{(i)}) = k+2-|\epsilon_i|/2$.
\end{enumerate}
\end{remark}

Let $\lambda = \big (\cdots (-2)^{s_{-2}}(-1)^{s_{-1}} 
1^{s_1}2^{s_2} \cdots \big )$
and $\mu = \big (\cdots (-2)^{t_{-2}}(-1)^{t_{-1}} 
1^{t_1}2^{t_2} \cdots \big )$ be two generalized partitions. 
If $t_i \le s_i$ for every $i$, we write $\mu \le \la$ and define
$$
\la - \mu 
= \big (\cdots (-2)^{s_{-2} - t_{-2}}(-1)^{s_{-1}-t_{-1}} 1^{s_1-t_1}2^{s_2-t_2} \cdots \big ).
$$
The following is from the proof of \cite[Theorem~5.8]{Qin1} 
and deals with \eqref{epsilonialphai}. 

\begin{lemma}   \label{ProofOfQin5.8}
Let $\la^{(1)}, \ldots, \la^{(N)}$ be generalized partitions. Then, 
\begin{eqnarray}     \label{ThmJJkAlpha.3}
& &\Tr \, q^\fn \, {\bf W}(\fL_1, z) \, \prod_{i=1}^N 
     \frac{\fa_{\la^{(i)}}(\alpha_i)}{\la^{(i)!}}   \nonumber   \\
&=&z^{\sum_{i=1}^N |\la^{(i)}|} \cdot \sum_{\substack{\w \la^{(1)} 
     \le \la^{(1)}, \ldots, \w \la^{(N)} \le \la^{(N)}\\
     \sum_{i=1}^N (|\la^{(i)}| - |\w \la^{(i)}|) = 0}}
     \,\, \prod_{\substack{1 \le i \le N\\n \ge 1}} \left ( 
     \frac{(-1)^{p^{(i)}_{n}}}{p^{(i)}_{n}!} 
     \frac{q^{n p^{(i)}_{n}}}{(1-q^n)^{p^{(i)}_{n}}}  
     \frac{1}{\w p^{(i)}_{n}!} \frac{1}{(1-q^n)^{\w p^{(i)}_{n}}} \right )
     \nonumber   \\
& &\cdot \Tr \, q^\fn \prod_{i=1}^N  \frac{\fa_{\la^{(i)} - \w \la^{(i)}}
     \big ((1_X - K_X)^{\sum_{n \ge 1} p^{(i)}_{n}}\alpha_i \big )}
     {\big (\la^{(i)}-\w \la^{(i)} \big )^!}
\end{eqnarray}
where $\w \la^{(i)} = \big ( \cdots (-n)^{\w p^{(i)}_n} \cdots 
(-1)^{\w p^{(i)}_1} 1^{p^{(i)}_1} \cdots n^{p^{(i)}_n} \cdots \big ), 
1 \le i \le N$ are generalized partitions, together with the convention 
that for an empty generalized partition $\mu$,
\begin{eqnarray}     \label{IntBeta}
\frac{\fa_\mu(\beta)}{\mu^!} = \int_X \beta = \langle \beta, 1_X \rangle 
= \langle \beta \rangle.
\end{eqnarray}
\end{lemma}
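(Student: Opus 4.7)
The plan is to substitute the Carlsson--Okounkov decomposition \eqref{WLz} for $\Wb(\fL_1, z)$ and then commute the resulting exponential vertex operators through the Heisenberg monomial via Wick-type contractions, before evaluating the remaining trace.

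Since the equivariant first Chern class of $\fL_1$ reduces to $1_X$ after setting $t=1$ (Remark~\ref{RMK_FtoW}(i)), the task becomes the evaluation of
$$
\Tr\, q^\fn\, \Gamma_-(1_X - K_X, z)\, \Gamma_+(-1_X, z)\, \prod_{i=1}^N \frac{\fa_{\la^{(i)}}(\alpha_i)}{\la^{(i)!}}.
$$
Because the Heisenberg commutator in Theorem~\ref{commutator} is central, the identity $[e^A, B] = [A, B]\,e^A$ yields
$$
\Gamma_-(1_X - K_X, z)\, \fa_n(\beta) = \big(\fa_n(\beta) + z^n\langle 1_X - K_X, \beta\rangle\big)\,\Gamma_-(1_X - K_X, z)
$$
for $n > 0$, and an analogous shift for commuting $\Gamma_+(-1_X, z)$ past $\fa_{-n}(\beta)$. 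Iterating these relations through the monomial $\prod_i \fa_{\la^{(i)}}(\alpha_i)$ and expanding the exponentials produces a sum over choices of contracted parts: each positive part of $\la^{(i)}$ may be contracted against $\Gamma_-$ (inserting the class $1_X - K_X$ into the relevant slot of the K\"unneth decomposition of $\tau_{k*}\alpha_i$ and contributing $z^n$), and each negative part of $\la^{(i)}$ may be contracted against $\Gamma_+$ (inserting $-1_X$ and contributing $z^{-n}$). The contracted parts collectively form $\w\la^{(i)} \le \la^{(i)}$.

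The pushforward identity $(\id^{\ell-1}\otimes \int\!\gamma\otimes\id^{k-\ell})\tau_{k*}\alpha = \tau_{(k-1)*}(\gamma\alpha)$ then guarantees that the uncontracted slots reassemble into $\fa_{\la^{(i)} - \w\la^{(i)}}((1_X - K_X)^{\sum_n p^{(i)}_n}\alpha_i)/(\la^{(i)} - \w\la^{(i)})^!$, with the $(-1_X)$-insertions collecting (after suitable cyclic reordering in the trace) into the sign $(-1)^{p^{(i)}_n}$. After all contractions are performed, both vertex operators disappear by cyclicity together with $\Gamma_+(\cdot, z)\vac = \vac$, leaving the residual trace. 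The total $z$-exponent $\sum_i |\la^{(i)}|$ follows from the grading-preservation constraint $\sum_i(|\la^{(i)}| - |\w\la^{(i)}|) = 0$ imposed by nonvanishing of the residual trace. The factorials $1/(p^{(i)}_n!\,\w p^{(i)}_n!)$ arise from the $1/k!$ in the exponential expansions of $\Gamma_\pm$ combined with multinomial counting of the selected parts among the multiplicities of $\la^{(i)}$, while the geometric factors $1/(1-q^n)^{p^{(i)}_n + \w p^{(i)}_n}$ and $q^{n p^{(i)}_n}$ emerge from the cyclic summation over trace positions using $q^\fn\fa_{\pm n}(\alpha)q^{-\fn} = q^{\mp n}\fa_{\pm n}(\alpha)$, yielding the resummation $\sum_{k\ge 0} q^{nk} = 1/(1-q^n)$.

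The main obstacle is the careful combinatorial bookkeeping: the multinomial coefficients from choosing contracted parts must cancel correctly against the $1/k!$ factorials in $\Gamma_\pm$ to leave exactly $1/(p^{(i)}_n!\,\w p^{(i)}_n!)$, and the sign $(-1)^{p^{(i)}_n}$ must be tracked correctly through the $-1_X$ in $\Gamma_+$, the super-commutator conventions for odd-degree components of $\alpha_i$ passing through $\tau_{k*}$, and the cyclic rearrangement in the trace. A secondary challenge is to justify the cyclic resummation as a rigorous identity of formal power series in $q$, rather than as a divergent expression.
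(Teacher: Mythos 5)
The paper does not actually prove this lemma: it is imported from the proof of Theorem~5.8 in [Qin1], so there is no in-paper argument to compare against. Your strategy --- substitute $\Wb(\fL_1,z)=\Gamma_-(1_X-K_X,z)\,\Gamma_+(-1_X,z)$ from \eqref{WLz}, contract the vertex operators against the Heisenberg monomial, and resum the cyclic $q^\fn$-conjugations into geometric series --- is exactly the argument of that source, and your structural claims (which parts contract with which operator, where the class insertions go, how the constraint $\sum_i(|\la^{(i)}|-|\w\la^{(i)}|)=0$ and the factors $1/(1-q^n)$ arise) are all correct.

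One concrete item in your sketch is wrong, and it is precisely the point you defer to ``careful bookkeeping'': the sign $(-1)^{p^{(i)}_{n}}$ does not come from the $-1_X$ in $\Gamma_+(-1_X,z)$. It is indexed by $p^{(i)}_{n}$, the multiplicity of the \emph{positive} contracted parts, and those contract with $\Gamma_-(1_X-K_X,z)$, not with $\Gamma_+$. The mechanism is: cycling $\Gamma_-(M,z)$ around the trace turns it into $\Gamma_-(M,qz)$ sitting to the right of the monomial, and pulling it back leftward past an annihilation operator gives $\fa_n(\beta)\,\Gamma_-(M,qz)=\Gamma_-(M,qz)\big(\fa_n(\beta)-(qz)^n\langle M,\beta\rangle\big)$; the minus sign here, iterated over the cycles, produces $-z^nq^n/(1-q^n)$ per positive part, hence $(-1)^{p^{(i)}_{n}}q^{np^{(i)}_{n}}/(1-q^n)^{p^{(i)}_{n}}$ together with the insertion of $(1_X-K_X)^{p^{(i)}_n}$. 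For the negative parts, the analogous minus sign in $\Gamma_+(L',z)\fa_{-n}(\beta)=\big(\fa_{-n}(\beta)-z^{-n}\langle L',\beta\rangle\big)\Gamma_+(L',z)$ is cancelled by $L'=-1_X$, which is exactly why no factor $(-1)^{\w p^{(i)}_{n}}$ appears; had the sign come from $\Gamma_+$ as you assert, it would be attached to $\w p^{(i)}_{n}$ instead. (Consistency check: the $N=1$ case, Lemma~\ref{Qin5.9}, carries $(-1)^{m_n}$ on the positive multiplicities only.) Two smaller points: in a trace the vertex operators do not disappear via $\Gamma_+(\cdot,z)\vac=\vac$ --- there is no vacuum to act on --- but because $\Gamma_\pm(\cdot,q^{\mp k}z)\to\id$ under the iterated conjugation; and that iteration silently uses \eqref{CarLemma5.2} to slide $\Gamma_+$ past $\Gamma_-$ with trivial scalar, which holds here since $\langle -1_X,\,1_X-K_X\rangle=0$ on a surface for degree reasons.
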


The leading term in the series 
$F^{\alpha_1, \ldots, \alpha_N}_{k_1, \ldots, 
k_N}(q)$ follows from Remark~\ref{RMK_FtoW}~(ii) and the leading term 
on the the right-hand-side of \eqref{ThmJJkAlpha.3}. 
We refer to the Theorem~5.8 and Theorem~5.12 in \cite{Qin1} for details.
When $N = 1$, the right-hand-side of \eqref{ThmJJkAlpha.3} 
has been worked out explicitly in \cite[Remark~5.9]{Qin1} as follows. 

\begin{lemma}   \label{Qin5.9}
Let $\la = (\cdots (-n)^{\w m_n} \cdots 
(-1)^{\w m_1} 1^{m_1}\cdots n^{m_n} \cdots)$ be a generalized partition. 
For $n_1 \ge 1$ with $m_{n_1} \cdot \w m_{n_1} \ge 1$, define
$m_{n_1}(n_1) = m_{n_1}-1$, $\w m_{n_1}(n_1) = \w m_{n_1}-1$,
and $m_n(n_1) = m_n$, $\w m_n(n_1) = \w m_n$ if $n \ne n_1$. 
Then, 
$
\displaystyle{\Tr \, q^\fn \, {\bf W}(\fL_1, z) \, 
\frac{\fa_\la(\alpha)}{\la^!}}
$ 
equals
$$
{z^{|\la|}} {(q; q)_\infty^{-\chi(X)}} \cdot
   \langle (1_X - K_X)^{\sum_{n \ge 1} m_{n}}, \alpha \rangle   \cdot \prod_{n \ge 1} \left ( \frac{(-1)^{m_{n}}}{m_{n}!} \frac{q^{n m_n}}{(1-q^n)^{m_n}} 
   \frac{1}{\w m_{n}!} \frac{1}{(1-q^n)^{\w m_n}} \right )
$$
\begin{eqnarray*}    
&+ &{z^{|\la|}} {(q; q)_\infty^{-\chi(X)}} \cdot \langle e_X, \alpha \rangle 
   \cdot \sum_{n_1 \ge 1 \, \text{\rm with } m_{n_1} \cdot \w m_{n_1} \ge 1} 
   \frac{(-n_1) q^{n_1}}{1-q^{n_1}} \cdot                  \nonumber   \\
& &\cdot \prod_{n \ge 1} \left ( \frac{(-1)^{m_{n}(n_1)}}{m_{n}(n_1)!} 
   \frac{q^{n m_n(n_1)}}{(1-q^n)^{m_n(n_1)}} 
   \frac{1}{\w m_{n}(n_1)!} \frac{1}{(1-q^n)^{\w m_n(n_1)}} \right ).              
\end{eqnarray*}
\end{lemma}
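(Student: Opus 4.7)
The plan is to specialize Lemma~\ref{ProofOfQin5.8} to $N=1$ and identify in the sum~\eqref{ThmJJkAlpha.3} the few generalized partitions $\w\la \le \la$ that contribute nontrivially. Writing $\w\la = \big(\cdots (-n)^{\w p_n}\cdots 1^{p_1}\cdots n^{p_n}\cdots\big)$, the first step is to show that $\Tr q^\fn\, \fa_{\la-\w\la}\big((1_X-K_X)^{\sum_{n \ge 1}p_n}\alpha\big)$ vanishes unless, for every $n \ge 1$, the multiplicity of $-n$ in $\la-\w\la$ equals that of $n$, i.e.\ $m_n - p_n = \w m_n - \w p_n$. The cleanest way is an iterated cyclic-trace argument: the identity $\fa_k\, q^\fn = q^k\, q^\fn\, \fa_k$ together with the commutation relation $[\fa_k,\fa_\ell]=0$ for $\ell \ne -k$ (Theorem~\ref{commutator}) reduces the computation, after $\min(r_n,s_n)$ normal-ordering steps for each $k$, to $\Tr q^\fn \fa_k^{r-s}$ with $r-s>0$, whose failure to preserve $\fn$ forces it to be zero. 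The surviving $\w\la$ are therefore parametrized by tuples $(r_n)_{n\ge 1}$ with $0 \le r_n \le \min(m_n,\w m_n)$ via $p_n=m_n - r_n$, $\w p_n=\w m_n - r_n$.

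The second step is to evaluate the cases $\sum_n r_n = 0$ and $\sum_n r_n = 1$. When $r_n \equiv 0$, i.e.\ $\w\la = \la$, the convention~\eqref{IntBeta} reduces the residual operator to the scalar $\langle (1_X - K_X)^{\sum_n m_n}, \alpha\rangle$, and by G\"ottsche's formula~\eqref{F-generating.1}, $\Tr q^\fn = (q;q)_\infty^{-\chi(X)}$; multiplying by the Lemma~\ref{ProofOfQin5.8} prefactor with $p_n=m_n,\,\w p_n=\w m_n$ yields the first summand of the lemma. When $r_{n_1}=1$ and $r_n=0$ for $n \ne n_1$, necessarily with $m_{n_1}\w m_{n_1}\ge 1$, the residual is $\fa_{-n_1}\fa_{n_1}\big((1_X - K_X)^{\sum_n m_n-1}\alpha\big)$, and a direct cyclic-trace computation using Theorem~\ref{commutator} produces
$$
   \Tr q^\fn\, \fa_{-n_1}(\gamma_1)\,\fa_{n_1}(\gamma_2)
 = -\frac{n_1\, q^{n_1}}{1-q^{n_1}}\,\langle \gamma_1,\gamma_2\rangle\,
   (q;q)_\infty^{-\chi(X)}.
$$
Summing over the K\"unneth splitting of $\tau_{2*}\big((1_X-K_X)^{\sum_n m_n-1}\alpha\big)$ and applying the self-intersection identity $\tau_2^*\tau_{2*}\gamma = e_X\cdot \gamma$ collapses the pairing to $\int_X e_X\,(1_X - K_X)^{\sum_n m_n-1}\alpha$. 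Since $e_X\in H^4(X)$ and $K_X \in H^2(X)$ force $e_X\cdot K_X^k = 0$ for $k\ge 1$ on the surface $X$, this integral simplifies to $\langle e_X,\alpha\rangle$, and assembling with the prefactor after the substitutions $p_{n_1}=m_{n_1}(n_1)$, $\w p_{n_1}=\w m_{n_1}(n_1)$ reproduces the second summand exactly.

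The main obstacle, and the reason only these two terms survive, is the vanishing of every contribution with $\sum_n r_n \ge 2$. Each additional pair $(-n,n)$ introduces another Wick contraction against the diagonal class, so the self-intersection identity above forces an extra factor of $e_X$ in the residual integral on $X$. Because $X$ is a smooth projective surface, $e_X \in H^4(X)$, and therefore $e_X^r \in H^{4r}(X) = 0$ whenever $r\ge 2$. This single degree count is the essential input that closes the argument; the remaining bookkeeping of factorials and of which part of $\la$ is reduced is routine.
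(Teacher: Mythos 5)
Your derivation is correct and follows exactly the route the paper indicates: the paper presents this lemma as the $N=1$ specialization of Lemma~\ref{ProofOfQin5.8} worked out explicitly (citing \cite[Remark~5.9]{Qin1}), and your argument carries out that specialization, using the trace-vanishing constraints (equivalently, the bidegree count of Lemma~\ref{TrqnfamuiNot0} or your $e_X^r=0$ observation for $r\ge 2$) to isolate the two surviving families $\w\la=\la$ and $\la-\w\la=(-n_1,n_1)$, and then evaluating them via $\tau_2^*\tau_{2*}\gamma=e_X\gamma$ and $e_X K_X=0$. No gaps; the details agree with the stated formula.
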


The next lemma deals with the trace on line \eqref{ThmJJkAlpha.3}.

\begin{lemma}  \label{TrqnfamuiNot0}
For $1 \le i \le N$, let $\ell(\mu^{(i)})$ be non-empty generalized 
partitions and $\beta_i$ be homogeneous cohomology class. If 
\begin{eqnarray}     \label{TrNot0Begin}
\Tr \, q^\fn \prod_{i=1}^N \frac{\fa_{\mu^{(i)}}(\beta_i)}{\mu^{(i)!}} \ne 0,
\end{eqnarray}
then all of the following must be satisfied:
\begin{enumerate}
\item[(i)] $\sum_{i=1}^N \big (|\mu^{(i)}|, 2(\ell(\mu^{(i)}) - 2) + 
|\beta_i| \big ) = 0$.

\item[(ii)] Let $n \in \Z$. Then the multiplicities of $n$ and $-n$ 
as parts of the generalized partition $\sum_{i=1}^N \mu^{(i)}$ 
are the same.
\end{enumerate}
\end{lemma}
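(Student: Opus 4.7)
The plan is to derive (i) from a bidegree count on $\fock$ and to derive (ii) from the mode decomposition of the Heisenberg action. Throughout I use that $\fock$ is bigraded by $(\fn,\deg_{\mathrm{coh}})$, that $q^\fn$ preserves this bigrading, and that the trace of a bigraded operator on each $H^*(\Xn)$ vanishes unless the operator is bihomogeneous of bidegree $(0,0)$. Thus \eqref{TrNot0Begin} forces the total bidegree of $\prod_{i=1}^N\fa_{\mu^{(i)}}(\beta_i)/\mu^{(i)!}$ to vanish.

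For (i), starting from the definition of $\fa_{\pm n}(\alpha)$ via the incidence variety $Q^{[m+n,m]}$ together with the identity $[\fn,\fa_m]=-m\fa_m$, one reads off that $\fa_m(\gamma)$ has bidegree $(-m,\,|\gamma|-2m-2)$ on $\fock$. Coupling this with the fact that the diagonal push-forward $\tau_{k*}:H^*(X)\to H^*(X^k)$ raises cohomological degree by $4(k-1)$ shows that $\fa_{\mu^{(i)}}(\beta_i)$ is bihomogeneous of bidegree $\big(-|\mu^{(i)}|,\,|\beta_i|+2(\ell(\mu^{(i)})-2)-2|\mu^{(i)}|\big)$. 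Summing over $i$ and setting the total bidegree to $(0,0)$ produces $\sum_i|\mu^{(i)}|=0$ and, using this equation, $\sum_i\big(2(\ell(\mu^{(i)})-2)+|\beta_i|\big)=0$, which is exactly (i).

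For (ii), the key algebraic fact, immediate from Theorem~\ref{commutator}, is that $[\fa_m(\alpha),\fa_n(\beta)]=0$ whenever $m+n\ne 0$; in particular the operators $\fa_{\pm k}(\cdot)$ and $\fa_{\pm k'}(\cdot)$ super-commute for all positive integers $k\ne k'$. Consequently the Fock space factors as $\fock\cong\bigotimes_{k>0}\fock^{(k)}$, where $\fock^{(k)}$ is the polynomial algebra generated by the mode-$k$ creation operators $\fa_{-k}(\alpha)$, and $\fn=\sum_{k>0}k\,N_k$ with $N_k$ the polynomial degree on $\fock^{(k)}$. Expanding the Heisenberg products via the K\"unneth decompositions of the $\tau_{k*}\beta_i$ writes $\prod_i\fa_{\mu^{(i)}}(\beta_i)/\mu^{(i)!}$ as a finite sum of products of individual Heisenberg operators; super-commutativity across modes lets me regroup the factors of each such term by mode, up to an overall sign. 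The trace then factors as $\Tr\big(q^\fn\prod_k A_k\big)=\prod_k\Tr_{\fock^{(k)}}\!\big(q^{kN_k}A_k\big)$, and the $k$-th factor is the trace on a polynomial algebra of an operator that shifts $N_k$ by the difference between the numbers of mode-$k$ creation and annihilation operators appearing in $A_k$. Such a trace vanishes unless this difference is zero, and the mode-$k$ creation and annihilation counts are intrinsic to the combined generalized partition $\sum_i\mu^{(i)}$; this yields (ii).

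The main obstacle is organizational rather than conceptual: one must track the K\"unneth components and sign conventions carefully through the regrouping by mode. However, the signs that appear are scalars, so they cannot turn a trace that vanishes by the mode-balance criterion into a nonzero one, and thus do not affect the conclusion.
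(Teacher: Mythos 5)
Your proof is correct, and for part (ii) it takes a genuinely different route from the paper's. Part (i) is essentially the paper's argument: the paper simply quotes the bidegree formula for $\fa_\mu(\beta)$ from \cite[(5.18)]{Qin1} and notes that a nonzero trace forces total bidegree $(0,0)$, whereas you re-derive that formula from the incidence-variety definition of $\fa_{\pm n}(\alpha)$ and the degree shift of $\tau_{k*}$; your bidegree $\bigl(-|\mu^{(i)}|,\,|\beta_i|+2(\ell(\mu^{(i)})-2)-2|\mu^{(i)}|\bigr)$ reduces, after imposing $\sum_i|\mu^{(i)}|=0$, to exactly the paper's conclusion. For part (ii) the paper instead argues by induction on the multiplicities $k_1,k_2$ of $n$ and $-n$: using cyclicity of the trace and $q^\fn\fa_n=q^{-n}\fa_n q^\fn$ it solves for $\Tr\,q^\fn\fa_n(\alpha)\prod_j\fa_{n_j}(\alpha_j)$ as an explicit factor times a sum of traces in which one $n$ and one $-n$ have been stripped off (formulas \eqref{TrqnfamuiNot0.2} and \eqref{TrqnfamuiNot0.3}), so an imbalance eventually yields an empty sum. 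You instead use the mode factorization $\fock\cong\bigotimes_{k>0}\fock^{(k)}$ of the irreducible Fock module, write $\fn=\sum_k kN_k$, factor the trace over modes, and kill each term by the elementary observation that the trace of an operator homogeneous of nonzero $N_k$-degree vanishes; the super-commutation signs incurred in regrouping by mode are scalars and harmless, as you note. Your route is cleaner and more conceptual, avoiding the recursion entirely; the paper's route has the incidental benefit that its intermediate identities \eqref{TrqnfamuiNot0.2} and \eqref{TrqnfamuiNot0.3} are reused verbatim later (e.g.\ to prove \eqref{Trij1Xij1X.01} and \eqref{Trij1Xij1X.02} in Lemma~\ref{Tracei1Xj1X}), so it does double duty. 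Both arguments are complete.
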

\begin{proof}
By \cite[(5.18)]{Qin1}, the bi-degree of $\fa_\mu(\beta)$ is
\begin{eqnarray}     \label{20161207.1}
\Big (|\mu|, 2(\ell(\mu) - 2) + |\beta| \Big ).
\end{eqnarray}
So the bi-degree of $\prod_{i=1}^N \fa_{\mu^{(i)}}(\beta_i)$ is 
equal to 
$$
\sum_{i=1}^N \big (|\mu^{(i)}|, 2(\ell(\mu^{(i)}) - 2) + 
|\beta_i| \big ),
$$
and (i) follows. To prove (ii), let $n$ be a part of 
$\sum_{i=1}^N \mu^{(i)}$. Let the multiplicities of $n$ and $-n$ 
as parts of $\sum_{i=1}^N \mu^{(i)}$ be $k_1$ and $k_2$ respectively. 
Denote the left-hand-side of \eqref{TrNot0Begin} by ${\rm Tr}$. 
Then, ${\rm Tr}$ is a finite sum of expressions of the form
$$
\Tr \, q^\fn \fa_{\pm n}(\alpha) \prod_{j=1}^m \fa_{n_j}(\alpha_j)
$$
where $1+m = \sum_{i=1}^N \ell(\mu^{(i)})$, 
$(\pm n) + \sum_{j=1}^m n_j = 0$, and $n$ and $-n$ appear $k_1$ and $k_2$ times respectively in the list $\pm n, n_1, \ldots n_m$. 
Assume $k_1 \ne k_2$.
To get a contradiction to \eqref{TrNot0Begin}, it sufficies to show that 
\begin{eqnarray}     \label{TrqnfamuiNot0.1}
\Tr \, q^\fn \fa_n(\alpha) \prod_{j=1}^m \fa_{n_j}(\alpha_j) = 0.
\end{eqnarray}

First of all, assume that $n < 0$. Then, 
\begin{eqnarray*}     
& &\Tr \, q^\fn \fa_n(\alpha) \prod_{j=1}^m \fa_{n_j}(\alpha_j)     \\
&=&q^{-n} \Tr \, \fa_n(\alpha) q^\fn \prod_{j=1}^m \fa_{n_j}(\alpha_j) \\
&=&q^{-n} \Tr \, q^\fn \prod_{j=1}^m \fa_{n_j}(\alpha_j) 
     \cdot \fa_n(\alpha) \\
&=&q^{-n} \Tr \, q^\fn \fa_n(\alpha) \prod_{j=1}^m \fa_{n_j}(\alpha_j)
   + n q^{-n} \sum_{n_{j} = -n} \langle \alpha_j, \alpha \rangle 
   \cdot \Tr \, q^\fn 
   \prod_{1 \le j_1 \le m, j_1 \ne j} \fa_{n_{j_1}}(\alpha_{j_1}). 
\end{eqnarray*}
It follows that the left-hand-side of \eqref{TrqnfamuiNot0.1} is equal to 
\begin{eqnarray}     \label{TrqnfamuiNot0.2}
\frac{n q^{-n}}{1 - q^{-n}} \sum_{n_{j} = -n} \langle \alpha_j, \alpha \rangle \cdot \Tr \, q^\fn 
   \prod_{1 \le j_1 \le m, j_1 \ne j} \fa_{n_{j_1}}(\alpha_{j_1}).
\end{eqnarray}

Next, assume that $n > 0$. Then, 
$$
\Tr \, q^\fn \fa_n(\alpha) \prod_{j=1}^m \fa_{n_j}(\alpha_j)     
= \Tr \, \prod_{j=1}^m \fa_{n_j}(\alpha_j) 
     \cdot q^\fn \fa_n(\alpha) 
= q^{-n} \Tr \, q^\fn \prod_{j=1}^m \fa_{n_j}(\alpha_j) 
     \cdot \fa_n(\alpha)
$$
since $\sum_{j=1}^m n_j = -n < 0$. Therefore, 
\begin{eqnarray*}     
& &\Tr \, q^\fn \fa_n(\alpha) \prod_{j=1}^m \fa_{n_j}(\alpha_j)  \\   
&=&q^{-n} \Tr \, q^\fn \fa_n(\alpha) \prod_{j=1}^m \fa_{n_j}(\alpha_j)
   + n q^{-n} \sum_{n_{j} = -n} \langle \alpha_j, \alpha \rangle 
   \cdot \Tr \, q^\fn 
   \prod_{1 \le j_1 \le m, j_1 \ne j} \fa_{n_{j_1}}(\alpha_{j_1})
\end{eqnarray*}
It follows that the left-hand-side of \eqref{TrqnfamuiNot0.1} is equal to 
\begin{eqnarray}     \label{TrqnfamuiNot0.3}
\frac{-n}{1 - q^{n}} \sum_{n_{j} = -n} \langle \alpha_j, \alpha \rangle 
   \cdot \Tr \, q^\fn 
   \prod_{1 \le j_1 \le m, j_1 \ne j} \fa_{n_{j_1}}(\alpha_{j_1}).
\end{eqnarray}

Note that in both \eqref{TrqnfamuiNot0.2} and \eqref{TrqnfamuiNot0.3},
$n$ and $-n$ appear $k_1-1$ and $k_2-1$ times respectively in
every list $n_{j_1}$ with $1 \le j_1 \le m$ and $j_1 \ne j$. 
Therefore, \eqref{TrqnfamuiNot0.1} follows from an induction on 
$k_1$ and $k_2$. 
\end{proof}

Assume that $\sum_{i=1}^N |\mu^{(i)}| = 0$ and $|\mu^{(i_0)}| < 0$ 
for some $1 \le i_0 \le N$. Put $n_0 = -|\mu^{(i_0)}|$. Then, $n_0 > 0$. 
By \cite[(5.21)]{Qin1}, 
$\displaystyle{\Tr \, q^\fn \prod_{i=1}^N \frac{\fa_{\mu^{(i)}}(\beta_i)}
  {\mu^{(i)!}}}$ is equal to 
\begin{eqnarray}  
& &\frac{q^{n_0}}{1 - q^{n_0}} \, \sum_{r = i_0+1}^N \Tr \, q^\fn 
     \prod_{1 \le i \le r-1, i \ne i_0} \frac{\fa_{\mu^{(i)}}(\beta_i)}{\mu^{(i)!}}   \nonumber  \\
& &\quad \cdot \left [\frac{\fa_{\mu^{(r)}}(\beta_r)}{\mu^{(r)!}},
      \frac{\fa_{\mu^{(i_0)}}(\beta_{i_0})}{\mu^{({i_0})!}} \right ] \cdot
      \prod_{i = r+1}^N \frac{\fa_{\mu^{(i)}}(\beta_i)}{\mu^{(i)!}}   \nonumber  \\
&+&\frac{1}{1 - q^{n_0}} \, 
    \sum_{r = 1}^{i_0 -1} \Tr \, q^\fn \prod_{i=1}^{r-1} \frac{\fa_{\mu^{(i)}}(\beta_i)}{\mu^{(i)!}}
    \cdot \left [\frac{\fa_{\mu^{(r)}}(\beta_r)}{\mu^{(r)!}},
      \frac{\fa_{\mu^{(i_0)}}(\beta_{i_0})}{\mu^{({i_0})!}} \right ]     \nonumber \\
& &\quad \cdot \prod_{r+1 \le i \le N, i \ne i_0} \frac{\fa_{\mu^{(i)}}(\beta_i)}{\mu^{(i)!}}.     \label{Trqdjpx.3} 
\end{eqnarray}

\section{\bf $\langle \ch_1 \ch_1 \rangle'$ 
in the equivariant setting and quasi-modular forms} 
\label{sect_CompEquiv}

In order to compute the reduced generating series 
$\langle \ch_1^{L_1} \ch_1^{L_2} \rangle'$ where $L_1$ and $L_2$ are 
line bundles on a smooth projective surface $X$, we have to understand 
certain quasi-modular forms which appear in a new reduced generating series 
$\langle \ch_1 \ch_1 \rangle'$. The series $\langle \ch_1 \ch_1 \rangle'$ 
is the counterpart of $\langle \ch_1^{L_1} \ch_1^{L_2} \rangle'$ 
in the equivariant setting.
In this section, we will recall the main results of \cite{Car1, Car2, SQ} 
which deal with the equivariant setting. 
We will follow the setups in \cite{SQ} and compute 
$\langle \ch_1 \ch_1 \rangle'$. In view of \eqref{Z246}, we will express 
the quasi-modular forms appearing in $\langle \ch_1 \ch_1 \rangle'$ 
by using $Z(2), Z(4)$ and $Z(6)$. 

Throughout this section, let $X = \C^2$. 
Let $u, v$ be the standard coordinate functions on $\mathbb C^2$.
Define the action of $\T = \C^*$ on $X$ by
\begin{eqnarray}  \label{action_plane}
s\cdot (u, v)=(su, s^{-1}v), \quad s\in \T.
\end{eqnarray}
The origin of $X$ is the only fixed point. This action of $\T$ on $X$ induces 
natural actions of $\T$ on the Hilbert schemes $\Xn$. Define a bilinear form 
$$
\langle \cdot, \cdot \rangle: \,\, \big (H^*_{\mathbb T}(\Xn) \otimes_{\C[t]} \C(t) \big )
\times \big (H^*_{\mathbb T}(\Xn) \otimes_{\C[t]} \C(t) \big ) \to \mathbb C(t)
$$ 
by putting
\begin{eqnarray} \label{bilinear1}
\langle A, B \rangle =(-1)^n p_!\iota_!^{-1}(A\cup B)
\end{eqnarray}
where $p$ is the projection $(\Xn)^{\mathbb T}\to \text{pt}$, 
and $\iota: (\Xn)^{\mathbb T}\to \Xn$ is the inclusion map. 

A Heisenberg algebra action on the middle cohomology groups
\begin{eqnarray*}   
\fock = \bigoplus_{n \ge 0} H_\T^{2n}(\Xn)
\end{eqnarray*}
was constructed in \cite{Vas} (see also \cite{Gro, LQW3, Nak}). 
Equivalently, the space
\begin{eqnarray}  \label{def-fockprime}
\fockprime 
= \bigoplus_{n \ge 0} H_\T^*(\Xn) \otimes_{\C[t]} \C(t)
\end{eqnarray}
is an irreducible module over the Heisenberg algebra generated by 
the Nakajima operators $\fa_m(\alpha), m \in \Z$ and $\alpha \in 
H_\T^*(X) \otimes_{\C[t]} \C(t) = \C(t) \cdot 1_X$ with 
\begin{eqnarray}  \label{comm_rel}
[\fa_m(\alpha), \fa_n(\beta)]
= m \cdot \delta_{m, -n} \cdot \langle \alpha, \beta \rangle
\end{eqnarray}
where $\langle \alpha, \beta \rangle$ denotes the equivariant pairing \eqref{bilinear1}.
For every integer $m$, put
$$
\fa_m = \fa_m(1_X),
$$
and $\vac = 1 \in H_\T^*(X^{[0]}) \otimes_{\C[t]} \C(t) = \C(t)$.
Since $H_\T^*(X) \otimes_{\C[t]} \C(t) = \C(t) \cdot 1_X$, 
\begin{eqnarray}  \label{fockprime}
\fockprime = \C(t)[\fa_{-1}, \fa_{-2}, \ldots ] \cdot \vac
\end{eqnarray}
together with the Heisenberg commutation relation
\begin{eqnarray}  \label{comm_rel_1X}
[\fa_m, \fa_n] = m \cdot \delta_{m, -n} \cdot t^{-2}.
\end{eqnarray}

In the rest of the paper, we set $t = 1$. We see from \eqref{fockprime}
and \eqref{comm_rel_1X} that 
\begin{eqnarray}  \label{fockprimeT=1}
\fockprime = \C[\fa_{-1}, \fa_{-2}, \ldots ] \cdot \vac
\end{eqnarray}
together with the following Heisenberg commutation relation
\begin{eqnarray}  \label{comm_rel_1XT=1}
[\fa_m, \fa_n] = m \cdot \delta_{m, -n}.
\end{eqnarray}

\begin{remark}   \label{RmkSigns}
In our setup here, we have implicitly defined 
\begin{eqnarray}  \label{RmkSigns.1}
\fa_n = (-1)^n \cdot \fa_{-n}^*
\end{eqnarray}
for $n > 0$, which is consistent with the setup in \cite{LQW2}.
The commutation relation \eqref{comm_rel_1XT=1} is 
the same as the commutation relation in \cite[(27)]{Car2}. 
\end{remark}

Recall from \eqref{L[n]} the tautological rank-$n$ vector bundle 
$\On$ over $\Xn$, which is $\T$-equivariant. 
Let $\ch_{k, \T} \big ( \On \big )$ 
be its $k$-th $\T$-equivariant Chern character.

\begin{definition}   \label{def-Gk}
For $k \ge 0$, define $\fG_k$ to be {\it the $k$-th equivariant Chern character operator} 
which acts on $\fockprime$ by cup product with 
$\bigoplus_n \ch_{k, \T} \big ( \On \big )$.
\end{definition}

For a generalized partition $\la = \big (\cdots (-2)^{m_{-2}}(-1)^{m_{-1}} 
1^{m_1}2^{m_2} \cdots \big )$, put
\begin{eqnarray*}
\fa_{\lambda} = \prod_i \fa_i^{m_i}
= \big (\cdots \fa_{-2}^{m_{-2}} \fa_{-1}^{m_{-1}} \fa_{1}^{m_{1}} \fa_{2}^{m_{2}}\cdots \big ).
\end{eqnarray*}
The following is from the Proposition~3.3 and Corollary~3.4 of \cite{SQ}.

\begin{proposition}   \label{str-Gk} 
The equivariant Chern character operator $\fG_k$ is equal to
$$
\sum_{\ell(\lambda) \le k+2, |\lambda|=0} \frac{\fa_{\lambda}}{\lambda^!} \cdot 
\Coe_{z^k} \frac{1}{(x-1)(1-x^{-1})} 
\prod_{n > 0} \left ( \frac{x^n-1}{n} \right )^{m_{-n}}
\cdot \prod_{n > 0} \left ( \frac{1-x^{-n}}{n} \right )^{m_n}
$$
where $|x^{-1}| < 1$, $x = e^z$, and $\la = \big (\cdots (-2)^{m_{-2}}
(-1)^{m_{-1}} 1^{m_1}2^{m_2} \cdots \big )$. In particular,
\beq   \label{str-Gk.0}
\fG_1 = \sum_{\ell(\la) = 3, |\la|=0} \frac{\fa_{\la}}{\la^!}.
\eeq
\end{proposition}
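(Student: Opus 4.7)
The plan is to compute the generating operator $\mathcal G(z) := \sum_{k \ge 0} z^k \fG_k$, which acts on $\fockprime$ by cup product with the full equivariant Chern character $\sum_{k \ge 0} z^k \ch_{k,\T}(\On)$, and then extract the coefficient of $z^k$ at the end.

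First I would apply equivariant localization on $\Xn$. Under the action \eqref{action_plane}, the $\T$-fixed points of $\Xn$ are the monomial ideals $I_\mu$ indexed by ordinary partitions $\mu$ of $n$; at $I_\mu$ the fiber of $\On$ is $\C[u,v]/I_\mu = \bigoplus_{(i,j) \in \mu} \C \cdot u^i v^j$, with $s \in \T$ acting by weight $s^{i-j}$. Setting $t=1$ as in \eqref{fockprimeT=1}, the operator $\mathcal G(z)$ is therefore diagonal in the fixed-point basis with eigenvalue $\chi_\mu(x) := \sum_{(i,j) \in \mu} x^{i-j}$ on the class $[I_\mu]$, where $x = e^z$.

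Next I would convert this diagonal action into a formula in the Heisenberg basis $\fa_\lambda \vac$ of $\fockprime$. Since $\fG_k$ preserves the grading, only generalized partitions with $|\lambda| = 0$ can appear. The key combinatorial input is that the Heisenberg operators $\fa_{\pm n}$, acting on a fixed-point class $[I_\mu]$, produce an explicit change $\chi_\mu(x) \to \chi_{\mu'}(x)$ of the content generating function, controlled by the factors $\frac{x^n-1}{n}$ (for $\fa_{-n}$) and $\frac{1-x^{-n}}{n}$ (for $\fa_n$). Combined with the Carlsson--Okounkov framework \eqref{WLz}, this yields the operator identity
\begin{equation*}
\mathcal G(z) = \sum_{|\lambda|=0} \frac{\fa_\lambda}{\lambda^!} \cdot \frac{1}{(x-1)(1-x^{-1})} \prod_{n>0}\left(\frac{x^n-1}{n}\right)^{m_{-n}}\left(\frac{1-x^{-n}}{n}\right)^{m_n},
\end{equation*}
from which extracting $\Coe_{z^k}$ delivers the claimed formula for $\fG_k$. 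The length bound $\ell(\lambda) \le k+2$ is immediate from the pole structure at $z=0$: the prefactor has a double pole, while each $\frac{x^n-1}{n}$ and $\frac{1-x^{-n}}{n}$ has a simple zero, so a term indexed by $\lambda$ starts at order $z^{\ell(\lambda)-2}$. For $k=1$ this forces $\ell(\lambda) = 3$ (the $\ell(\lambda) \le 2$ terms are pinned to $z^0$ and reproduce $\fG_0$), and the length-$3$ coefficient of $z^1$ is exactly $1$, yielding \eqref{str-Gk.0}.

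The main obstacle is the step above of establishing the precise dictionary between the fixed-point and Heisenberg bases, so that the scalar content generating function $\chi_\mu(x)$ translates into the stated operator-valued expression. This requires careful bookkeeping of the Heisenberg action on $\T$-fixed classes (after Vasserot and Nakajima) together with the sign and normalization conventions noted in Remarks~\ref{SignDiff} and \ref{RmkSigns}, and is in essence the computation carried out in \cite{SQ}.
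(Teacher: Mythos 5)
The paper does not actually prove this proposition: it is quoted from Proposition~3.3 and Corollary~3.4 of \cite{SQ}, so the only ``proof'' in the paper is that citation. Measured against what a self-contained proof would require, your proposal correctly executes the easy, peripheral steps. The cup-product operator preserves each summand of $\fockprime$, so only $|\la|=0$ contributes; since $(x-1)(1-x^{-1})=z^2+O(z^4)$ while each factor $\tfrac{x^n-1}{n}$ and $\tfrac{1-x^{-n}}{n}$ is $z+O(z^2)$, the term indexed by $\la$ begins at order $z^{\ell(\la)-2}$, which gives the bound $\ell(\la)\le k+2$; and for $k=1$ the terms with $\ell(\la)\le 2$ are even functions of $z$ (so contribute nothing to $\Coe_{z^1}$), while for $\ell(\la)=3$ the condition $|\la|=0$ kills the order-$z$ correction in the product of the three factors, so the coefficient is exactly $1$. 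That part of your argument is a complete and correct derivation of \eqref{str-Gk.0} \emph{from} the displayed formula.

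The gap is that the displayed formula is itself the entire content of the proposition, and you do not prove it. You correctly observe that $\bigoplus_n \ch_{k,\T}(\On)$ is diagonal on the fixed-point classes $[I_\mu]$ with eigenvalue the content generating function $\sum_{(i,j)\in\mu}x^{\pm(i-j)}$, but the passage from this diagonal description to the normally ordered Heisenberg expression $\sum_{|\la|=0}\tfrac{\fa_\la}{\la^!}\prod_n\big(\tfrac{x^n-1}{n}\big)^{m_{-n}}\big(\tfrac{1-x^{-n}}{n}\big)^{m_n}$ divided by $(x-1)(1-x^{-1})$ is exactly the hard step, and your proposal asserts it (``controlled by the factors\ldots'') and then explicitly outsources it to \cite{SQ}. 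Establishing that dictionary requires identifying the fixed-point basis with Schur (or Jack) functions under the boson--fermion/symmetric-function correspondence and computing the zero mode of the relevant vertex operator on that basis, none of which is supplied. The appeal to the Carlsson--Okounkov formula \eqref{WLz} does not help: $\Wb(L,z)$ is the Ext vertex operator, a different object from the Chern character operator, and no formal manipulation of \eqref{WLz} yields $\fG_k$. Finally, your generating-function identity for $\mathcal G(z)=\sum_k z^k\fG_k$ cannot hold verbatim as an identity of Laurent series at $z=0$, because the empty-partition term contributes $\tfrac{1}{(x-1)(1-x^{-1})}=z^{-2}-\tfrac1{12}+\cdots$ times the identity; the hypothesis $|x^{-1}|<1$ in the statement is precisely the signal that the empty term must be expanded as $\sum_{i\ge1}ix^{-i}$ and regularized, a normalization issue your sketch glosses over (it is harmless for $k=1$, where the empty term is an even function of $z$, but it would matter for a proof of the general formula).
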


Fix $m \in \Z$. Let 
\beq   \label{FtoG}
\langle \ch_{k_1} \cdots \ch_{k_N} \rangle 
= \Tr \, q^\fn \, \Gamma_-(z)^m \Gamma_+(z)^{-m} \, \prod_{i=1}^N \fG_{k_i}
\eeq
where $\fn$ is the number-of-points operator (i.e., 
$\fn|_{H_\T^*(\Xn)} = n \, {\rm Id}$) and 
$\Gamma_\pm(z)$ is from \eqref{Gammaz}.
By the result in Subsection~4.1 of \cite{Car1}, 
\beq   \label{Fmq}
\langle \rangle = (q;q)_\infty^{m^2-1}.
\eeq
Following \cite{Oko}, we define the reduced series 
\beq   \label{ReducedFk1N}
  \langle \ch_{k_1} \cdots \ch_{k_N} \rangle'
= \frac{\langle \ch_{k_1} \cdots \ch_{k_N} \rangle}{\langle \rangle}
= (q;q)_\infty^{-m^2+1} \cdot \langle \ch_{k_1} \cdots \ch_{k_N} \rangle.
\eeq
Parts (i) and (ii) of the following lemma are proved in 
\cite{Car1, Car2} and \cite{SQ} respectively.

\begin{lemma}   \label{kOdd}
Let $k_1, \ldots, k_N \ge 0$, and $m \in \Z$. 
\begin{enumerate}
\item[{\rm (i)}] As a function of $q$, the reduced series 
$\langle \ch_{k_1} \cdots \ch_{k_N} \rangle'$ 
is a quasi-modular form of weight at most $\sum_{i=1}^N (k_i + 2)$. 
As a function of $m$, it is a polynomial in $m^2$ of degree at most
$\sum_{i=1}^N \big (\lfloor \frac{k_i}{2} \rfloor + 1 \big )$.

\item[{\rm (ii)}] If $\sum_{i=1}^N k_i$ is odd, 
then $\langle \ch_{k_1} \cdots \ch_{k_N} \rangle' = 0$.
\end{enumerate}
\end{lemma}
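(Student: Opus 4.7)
The plan is to reduce the trace in \eqref{FtoG} to explicit Bachmann--K\"uhn-type $q$-sums. First, I would expand each operator $\fG_{k_i}$ via Proposition~\ref{str-Gk}, rewriting $\langle \ch_{k_1} \cdots \ch_{k_N} \rangle$ as a finite combination, with coefficients obtained by extracting $z^{k_i}$-coefficients of rational generating functions, of traces
$$
\Tr\, q^\fn\, \Gamma_-(z)^m\, \Gamma_+(z)^{-m}\, \prod_{i=1}^N \fa_{\lambda^{(i)}}
$$
where each $\lambda^{(i)}$ is a generalized partition with $|\lambda^{(i)}|=0$ and $\ell(\lambda^{(i)}) \le k_i+2$. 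Next, I would apply the commutation identities $[\fa_k, \Gamma_-(z)^m] = m z^{-k}\Gamma_-(z)^m$ and $[\fa_{-k}, \Gamma_+(z)^{-m}] = m z^{-k}\Gamma_+(z)^{-m}$ for $k>0$ (derived from \eqref{comm_rel_1XT=1}), together with the cyclic property of the trace, to commute all Heisenberg operators across the vertex operators. Each step either contracts a pair $\fa_n, \fa_{-n}$ yielding a $q$-series factor of the form $q^{nd}/(1-q^n)$, or crosses a vertex operator producing a factor $m z^{\pm k}$. The reduction bottoms out at the bare trace $(q;q)_\infty^{m^2-1}$ from \eqref{Fmq}. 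Dividing by $\langle\rangle$ then identifies $\langle\ch_{k_1}\cdots\ch_{k_N}\rangle'$ as a combination, with coefficients in $\Q[m^2]$, of nested $q$-sums of the form $\sum_{n_1 > \cdots > n_\ell \ge 1} \prod_j q^{n_j d_j}/(1-q^{n_j})^{s_j}$, i.e.\ Bachmann--K\"uhn brackets.

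The weight bound $\sum(k_i+2)$ would follow because each Heisenberg part contributes one unit to the bracket weight and Proposition~\ref{str-Gk} caps parts by $\ell(\lambda^{(i)}) \le k_i+2$; the $m^2$-degree bound $\sum_i(\lfloor k_i/2\rfloor+1)$ follows since only paired cross-vertex contractions produce factors of $m$, and the constraint $|\lambda^{(i)}|=0$ together with $\ell(\lambda^{(i)})\le k_i+2$ limits the number of such pairings per factor. To upgrade ``element of $\qMZV$'' to ``quasi-modular form'', I would observe that the prefactor $1/[(x-1)(1-x^{-1})]$ in Proposition~\ref{str-Gk} is an even function of $z$ whose Laurent expansion begins at $z^{-2}$ with no $z^{-1}$ term; tracking parities through the reduction excludes any appearance of the bracket $[1]$, so the result lies in $\Q[Z(2), Z(4), Z(6)] = {\bf QM}$ by \eqref{Z246}.

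For part (ii), I would exploit the parity involution $\fa_n\mapsto -\fa_n$ combined with $z\mapsto -z$. This preserves both \eqref{comm_rel_1XT=1} and the vertex operators $\Gamma_\pm(z)^{\pm m}$ (each summand $z^{-n}\fa_{\pm n}/n$ in their exponents is invariant), while a Heisenberg monomial $\fa_{\lambda^{(i)}}$ transforms by $(-1)^{\ell(\lambda^{(i)})}$ and the $z^{k_i}$-coefficient extraction picks up $(-1)^{k_i}$. Inspection of the even/odd structure of the rational factors in Proposition~\ref{str-Gk} forces $\ell(\lambda^{(i)}) \equiv k_i \pmod 2$ on the surviving terms, so the net sign on the reduced series is $(-1)^{\sum_i k_i}$. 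Hence the series equals its own negative when $\sum_i k_i$ is odd, and so must vanish.

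\emph{The main obstacle} is the simultaneous control of the weight and $m^2$-degree bounds in part (i), where the combinatorics of $z^{k_i}$-residue extraction and vertex-operator contractions must be reconciled; in particular, ruling out any $[1]$-contribution (required for quasi-modularity, not merely for membership in $\qMZV$) demands a careful parity check on the expansion in Proposition~\ref{str-Gk}.
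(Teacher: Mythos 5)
The paper does not actually prove this lemma: it is quoted with a one-line attribution, part (i) to \cite{Car1, Car2} and part (ii) to \cite{SQ}, so your proposal must be judged as a self-contained argument. Its skeleton --- expand $\fG_{k_i}$ by Proposition~\ref{str-Gk}, contract the Heisenberg operators through $\Gamma_-(z)^m\Gamma_+(z)^{-m}$, and read off brackets with coefficients in $\Q[m]$ --- is the right reduction (it is essentially \cite[(4.13)]{SQ}, quoted as \eqref{TrLa1Lak.2}), and your parity observation that $\ell(\lambda)\equiv k \pmod 2$ on the support of $\fG_k$ is correct: for $|\lambda|=0$ the generating function in Proposition~\ref{str-Gk} equals $\prod_n (2\sinh(nz/2))^{m_n+m_{-n}}/\big(n^{m_n+m_{-n}}(2\sinh(z/2))^2\big)$, which has parity $(-1)^{\ell(\lambda)}$ in $z$. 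But the step that is supposed to deliver quasi-modularity is a non sequitur. Excluding the bracket $[1]$ only places the answer in $\qMD$ (or in $\qMZV$), and $\qMD$ is far larger than ${\bf QM}=\Q[Z(2),Z(4),Z(6)]$: for instance $Z(3)=2[3]$ involves no $[1]$ and is not quasi-modular, and such odd-weight and non-modular brackets do arise from exactly the kind of contractions your reduction produces (they visibly appear in the non-equivariant analogue, Lemma~\ref{F101XL}). The actual proof of part (i) in \cite{Car2} does not proceed by inspecting which brackets occur; it identifies the trace with a Bloch--Okounkov $q$-bracket of a shifted symmetric function and invokes the Bloch--Okounkov quasi-modularity theorem \cite{BO}. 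That input (or an equivalent) is missing from your argument, and it is precisely the point the whole paper leans on: Proposition~\ref{h11024} determines the forms from finitely many $q$-coefficients only because quasi-modularity is already known abstractly. The $m$-degree bound also needs more than you give: the naive count allows $m^{k_i+2}$ from each factor, and the improvement to $m^{2(\lfloor k_i/2\rfloor+1)}$ for odd $k_i$ rests on the cancellation of the fully-crossed terms ($\w\la^{(i)}=\la^{(i)}$), which is \cite[Lemma~4.7]{SQ} and is not a consequence of $|\lambda^{(i)}|=0$ alone.

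Part (ii) has a concrete error. Under $\fa_{\pm n}\mapsto -\fa_{\pm n}$ and $z\mapsto -z$, the exponent summand $\frac{z^{\mp n}}{n}\fa_{\pm n}$ of $\Gamma_\pm(z)$ is multiplied by $(-1)^{n+1}$, so it is \emph{not} invariant for even $n$; your claimed invariance of the vertex operators fails. What the sign involution $\fa_n\mapsto -\fa_n$ genuinely gives (using $\sigma(\fG_k)=(-1)^k\fG_k$, which does follow from your parity observation) is $\Gamma_\pm(z)^{\pm m}\mapsto\Gamma_\pm(z)^{\mp m}$ and hence
\begin{equation*}
\langle \ch_{k_1}\cdots\ch_{k_N}\rangle'(m) \;=\; (-1)^{\sum_i k_i}\,\langle \ch_{k_1}\cdots\ch_{k_N}\rangle'(-m),
\end{equation*}
which says the series is an odd function of $m$ when $\sum_i k_i$ is odd --- not that it vanishes. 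To close the argument you need the separate symmetry under $m\mapsto -m$, obtained for example from the transpose anti-involution $\fa_n\mapsto\fa_{-n}$ (under which each $\fG_k$ is fixed and $\Gamma_-(z)^m\Gamma_+(z)^{-m}$ becomes the $-m$ operator at $z^{-1}$, harmless since the trace is independent of $z$), or from the geometric duality behind \eqref{ResOfEToD}. That second symmetry is the actual content of the proof in \cite{SQ}, and your proposal does not supply it.
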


In particular, as a function of $q$, $\langle \ch_1 \ch_1 \rangle'$ 
is a quasi-modular form of weight at most $6$. As a function of $m$, 
it is a polynomial in $m^2$ of degree at most $2$. 
In the rest of this section, we will compute 
$\langle \ch_1 \ch_1 \rangle'$ explicitly 
via the approaches established in \cite{SQ}. 
We begin with a lemma computing various traces in equivariant setting.

\begin{lemma}   \label{Trala}
Let $i$ and $j$ be positive integers. Then,
\begin{eqnarray}   
   \Tr \, q^\fn \fa_{-i}\fa_i 
&=&(q;q)_\infty^{-1} \cdot \frac{iq^i}{1-q^i},   \label{Trala.01}   \\
   \Tr \, q^\fn \fa_i \fa_{-i}
&=&(q;q)_\infty^{-1} \cdot \frac{i}{1-q^i},   \label{Trala.02}    \\
   \Tr \, q^\fn \fa_i\fa_j \fa_{-i}\fa_{-j} 
&=&(q;q)_\infty^{-1} \cdot 
     \frac{(1+\delta_{i,j})ij}{(1 - q^{i})(1 - q^{j})}, \label{Trala.05}\\
   \Tr \, q^\fn \fa_{-i}\fa_{-j}\fa_i\fa_j 
&=&(q;q)_\infty^{-1} \cdot   
     \frac{(1+\delta_{i,j})ijq^{i+j}}{(1 - q^{i})(1 - q^{j})},  
     \label{Trala.06}    \\
   \Tr \, q^\fn \fa_{-i} \fa_{j} \fa_{-j}\fa_{i} 
&=&(q;q)_\infty^{-1} \cdot \frac{ijq^i(1 + \delta_{i,j} q^j)}
    {(1-q^i)(1-q^j)},  \label{Trala.07}   \\
   \Tr \, q^\fn \fa_{-i} \fa_{-j} \fa_{i+j} \fa_{-i-j}\fa_{i} \fa_{j}
&=&(q;q)_\infty^{-1} \cdot 
   \frac{(1+\delta_{i,j})ij(i+j)q^{i+j}}{(1-q^i)(1-q^j)(1-q^{i+j})},  
     \label{Trala.08}   \\
   \Tr \, q^\fn \fa_{-i-j} \fa_{i} \fa_{j} \fa_{-i}\fa_{-j} \fa_{i+j}
&=&(q;q)_\infty^{-1} \cdot \frac{}{}
  \frac{(1+\delta_{i,j})ij(i+j)q^{i+j}}{(1-q^i)(1 - q^j)(1-q^{i+j})}.
     \label{Trala.09} 
\end{eqnarray}
\end{lemma}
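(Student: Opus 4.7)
The plan is to reduce every trace to the scalar $\Tr \, q^\fn = (q;q)_\infty^{-1}$ by iterating two elementary tools: the Heisenberg commutation relation $[\fa_m,\fa_n]=m\delta_{m,-n}$ of \eqref{comm_rel_1XT=1}, and the cyclic shift identity
\[
\Tr \, q^\fn \, \fa_k A = q^{-k} \Tr \, q^\fn \, A \fa_k,
\]
which follows from cyclicity of the trace combined with $q^\fn \fa_k = q^{-k}\fa_k q^\fn$ (since $\fa_k$ shifts the $\fn$-eigenvalue by $-k$). Each time an operator is cycled around a closed loop, the resulting linear equation contributes one factor $(1-q^\ast)$ to the denominator of the answer.

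For \eqref{Trala.01}, a single cyclic shift combined with $[\fa_i,\fa_{-i}]=i$ gives
\[
\Tr \, q^\fn \, \fa_{-i}\fa_i = q^i\bigl(\Tr \, q^\fn \, \fa_{-i}\fa_i + i\Tr \, q^\fn\bigr),
\]
which solves to the stated formula, and \eqref{Trala.02} is then immediate from \eqref{Trala.01}. Identity \eqref{Trala.05} follows from \eqref{Trala.06} by two cyclic shifts of the destructions (picking up $q^{-(i+j)}$). For the key four-point trace \eqref{Trala.06}, one cyclic shift of the rightmost $\fa_j$ combined with the expansions $\fa_j\fa_{-i}=\fa_{-i}\fa_j+j\delta_{i,j}$ and $\fa_j\fa_{-j}=\fa_{-j}\fa_j+j$ reduces the trace to a linear equation in itself plus the known two-point trace \eqref{Trala.01}; the coincidence commutator $[\fa_j,\fa_{-i}]=j\delta_{i,j}$ contributes the extra diagonal term that combines with the principal contribution into the factor $(1+\delta_{i,j})$. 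Identity \eqref{Trala.07} follows from \eqref{Trala.06} and \eqref{Trala.01} by normal-ordering the central pair $\fa_j\fa_{-j}=\fa_{-j}\fa_j+j$ and commuting $\fa_j$ past $\fa_i$.

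For the six-point trace \eqref{Trala.08}, I would first expand the central pair via $\fa_{i+j}\fa_{-i-j}=\fa_{-i-j}\fa_{i+j}+(i+j)$, which produces a correction equal to $(i+j)$ times \eqref{Trala.06} plus a fully normal-ordered term $T_6^N$. A single cyclic shift of $\fa_{-i-j}$ across $T_6^N$ together with the same central-pair commutator yields $(1-q^{i+j})T_6^N$ as another multiple of \eqref{Trala.06}; solving and recombining gives the desired denominator $(1-q^i)(1-q^j)(1-q^{i+j})$. Identity \eqref{Trala.09} is obtained more directly by a single cyclic shift of $\fa_{i+j}$ applied to the original six-operator expression, combined with the same central-pair commutator, producing the equation $(1-q^{i+j})\cdot(\text{six-point trace})=(i+j)q^{i+j}\cdot$ \eqref{Trala.05}. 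The main obstacle is purely bookkeeping: tracking all Wick-type contractions (in particular the coincidence contractions responsible for the $(1+\delta_{i,j})$ factors in \eqref{Trala.05}, \eqref{Trala.06}, \eqref{Trala.08}, \eqref{Trala.09}), the accumulated powers of $q$ from cyclic shifts, and verifying that each intermediate linear equation has nonzero leading coefficient so it can be uniquely solved. No conceptual novelty beyond the two basic tools above is required.
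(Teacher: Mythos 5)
Your proposal is correct and uses essentially the same method as the paper: the cyclic-shift identity $\Tr \, q^\fn \, \fa_k A = q^{-k}\,\Tr \, q^\fn \, A\fa_k$ combined with the Heisenberg relation \eqref{comm_rel_1XT=1} to produce solvable linear equations in each trace, with the coincidence commutators correctly producing the $(1+\delta_{i,j})$ factors. The only differences are cosmetic reorderings --- you derive \eqref{Trala.06} before \eqref{Trala.05} and reduce \eqref{Trala.08} to \eqref{Trala.06} via the central pair rather than peeling the leftmost creation operator down to the four-point trace \eqref{Trala.07} as the paper does --- and each of your intermediate equations checks out.
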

\begin{proof}
Recall the following commutation relation \eqref{comm_rel_1XT=1}. 
Since $\Tr \, q^\fn = (q;q)_\infty^{-1}$, 
$$
   \Tr \, q^\fn \fa_{-i}\fa_i 
= q^i \Tr \, \fa_{-i} q^\fn \fa_i = q^i \Tr \, q^\fn \fa_i \fa_{-i} 
= q^i \Tr \, q^\fn \fa_{-i} \fa_i + iq^i \cdot (q;q)_\infty^{-1}.
$$
So \eqref{Trala.01} follows. Moreover, 
$\Tr \, q^\fn \fa_i \fa_{-i}
= \Tr \, q^\fn \fa_{-i} \fa_i + i \cdot (q;q)_\infty^{-1}$.
Thus \eqref{Trala.02} follows from \eqref{Trala.01}.

Similarly, $\Tr \, q^\fn \fa_i\fa_j \fa_{-i}\fa_{-j}$ is equal to
\begin{eqnarray*}
& &\Tr \, q^\fn \fa_{-i} \fa_{-j} \fa_i\fa_j 
     + (1+\delta_{i,j}) i \, \Tr \, q^\fn \fa_j \fa_{-j} 
     + (1+\delta_{i,j}) j \, \Tr \, q^\fn \fa_{-i} \fa_i   \\
&=&q^{i+j} \Tr \, \fa_{-i} \fa_{-j} q^\fn \fa_i\fa_j 
     + (1+\delta_{i,j}) i \, \frac{(q;q)_\infty^{-1} \cdot j}{1-q^j} 
     + (1+\delta_{i,j}) j \, \frac{(q;q)_\infty^{-1} \cdot iq^i}{1-q^i} \\
&=&q^{i+j} \Tr \, q^\fn \fa_i\fa_j \fa_{-i} \fa_{-j} 
     + (q;q)_\infty^{-1} \cdot 
     \frac{(1+\delta_{i,j}) ij(1-q^{i+j})}{(1-q^j)(1-q^j)}.
\end{eqnarray*}
So \eqref{Trala.05} follows. Moreover, from the above argument, 
$\Tr \, q^\fn \fa_{-i} \fa_{-j} \fa_i\fa_j$ is equal to
$$
\Tr \, q^\fn \fa_i\fa_j \fa_{-i}\fa_{-j}
- (1+\delta_{i,j}) i \, \Tr \, q^\fn \fa_j \fa_{-j} 
     - (1+\delta_{i,j}) j \, \Tr \, q^\fn \fa_{-i} \fa_i.
$$
Therefore, \eqref{Trala.06} follows from \eqref{Trala.01}, 
\eqref{Trala.02} and \eqref{Trala.05}. Since 
$$
  \Tr \, q^\fn \fa_{-i} \fa_j \fa_{-j}\fa_{i}
= \Tr \, q^\fn \fa_{-i} \fa_{-j} \fa_j \fa_{i} 
  + j \Tr \, q^\fn \fa_{-i} \fa_{i},
$$
\eqref{Trala.06} follows from \eqref{Trala.01}, 
\eqref{Trala.02} and \eqref{Trala.05}.

Next, $\Tr \, q^\fn \fa_{-i} \fa_{-j} \fa_{i+j} \fa_{-i-j}\fa_{i} \fa_{j}$ 
is equal to 
\begin{eqnarray*}
& &q^i \Tr \, \fa_{-i} q^\fn \fa_{-j}\fa_{i+j}\fa_{-i-j}\fa_{i}\fa_{j} \\
&=&q^i \Tr \, q^\fn \fa_{-j} \fa_{i+j}\fa_{-i-j}\fa_{i}\fa_{j}\fa_{-i} \\
&=&q^i \Tr \, q^\fn \fa_{-i}\fa_{-j}\fa_{i+j}\fa_{-i-j}\fa_{i}\fa_{j} 
  + (1+\delta_{i,j})iq^i \Tr \, q^\fn \fa_{-j}\fa_{i+j}\fa_{-i-j}\fa_{j}.
\end{eqnarray*}
It follows that 
$\Tr \, q^\fn \fa_{-i} \fa_{-j} \fa_{i+j} \fa_{-i-j}\fa_{i} \fa_{j}$ 
is equal to 
\begin{eqnarray*}
  \frac{(1+\delta_{i,j})iq^i}{1-q^i} 
   \Tr \, q^\fn \fa_{-j}\fa_{i+j}\fa_{-i-j}\fa_{j}
&=&\frac{(1+\delta_{i,j})iq^i}{1-q^i} \cdot (q;q)_\infty^{-1} \cdot  
   \frac{j(i+j)q^j}{(1-q^j)(1-q^{i+j})}  \\
&=&(q;q)_\infty^{-1} \cdot 
   \frac{(1+\delta_{i,j})ij(i+j)q^{i+j}}{(1-q^i)(1-q^j)(1-q^{i+j})}.
\end{eqnarray*}
This verifies \eqref{Trala.08}. Finally, 
$\Tr \, q^\fn \fa_{-i-j} \fa_{i} \fa_{j} \fa_{-i}\fa_{-j} \fa_{i+j}$ 
is equal to
\begin{eqnarray*}
& &\Tr \, q^\fn \fa_{-i-j} \fa_{i} \fa_{j} \fa_{-i}\fa_{-j} \fa_{i+j}  \\
&=&q^{i+j} \Tr \, \fa_{-i-j} q^\fn 
   \fa_{i} \fa_{j} \fa_{-i}\fa_{-j} \fa_{i+j}  \\
&=&q^{i+j} \Tr \, q^\fn 
   \fa_{i} \fa_{j} \fa_{-i}\fa_{-j} \fa_{i+j} \fa_{-i-j}  \\
&=&q^{i+j} \Tr \, q^\fn \fa_{-i-j}
   \fa_{i} \fa_{j} \fa_{-i}\fa_{-j} \fa_{i+j} 
   + (i+j) q^{i+j} \Tr \, q^\fn \fa_{i} \fa_{j} \fa_{-i}\fa_{-j}.
\end{eqnarray*}
Hence $\Tr \, q^\fn \fa_{-i-j} \fa_{i} \fa_{j} \fa_{-i}\fa_{-j} \fa_{i+j}$ 
is equal to
$$
  \frac{(i+j) q^{i+j}}{1-q^{i+j}} 
  \Tr \, q^\fn \fa_{i} \fa_{j} \fa_{-i}\fa_{-j}
= (q;q)_\infty^{-1} \cdot \frac{}{}
  \frac{(1+\delta_{i,j})ij(i+j)q^{i+j}}{(1-q^i)(1 - q^j)(1-q^{i+j})}
$$
where we have used \eqref{Trala.05} in the last step.
This confirms \eqref{Trala.09}.
\end{proof}

\begin{definition}   \label{h11}
We define the following three functions of $q$:
\begin{eqnarray}
   h_{1,1}^{(0)}(q) 
&=&\sum_{i, j > 0} \frac{ij(i+j)q^{i+j}}{(1-q^i)(1 - q^j)(1-q^{i+j})},
             \label{h11.01}    \\ 
   h_{1,1}^{(2)}(q)
&=&-\sum_{i,j > 0} \frac{j (i+j)(q^{i+j}+q^{2i+j})}
     {(1-q^i)^2(1-q^j)(1-q^{i+j})}   \nonumber    \\
& &  \quad - \frac12 \sum_{i, j > 0} \frac{ij(q^{i+j}+q^{2i+2j})}
     {(1 - q^{i})(1 - q^{j})(1-q^{i+j})^2},   \label{h11.02}   \\ 
   h_{1,1}^{(4)}(q)
&=&\frac14 \sum_{i,j,k, \ell >0, i+j=k+\ell} \frac{(i+j)q^{i+j}
     (1+q^{i+j})}{(1-q^i)(1-q^j)(1-q^k)(1-q^{\ell})(1-q^{i+j})}  \nonumber \\
& &  \quad - \sum_{i,j,k >0} \frac{(i+j)q^{i+j+k}(1+q^{i+j})}
     {(1-q^i)(1-q^j)(1-q^k)(1-q^{i+j})(1-q^{i+j+k})}  \nonumber \\
& &  \quad + \sum_{i,j,k >0} \frac{kq^{i+j+k}(1+q^{k})}
     {(1-q^i)(1-q^j)(1-q^k)(1-q^{i+k})(1-q^{j+k})}.   \label{h11.03}
\end{eqnarray}
\end{definition}

\begin{lemma}   \label{EquivCh1Ch1Prop}
Let $h_{1,1}^{(0)}(q), h_{1,1}^{(2)}(q), h_{1,1}^{(4)}(q)$
be from Definition~\ref{h11}. Then, 
\beq   \label{EquivCh1Ch1Prop.0}
  \langle \ch_1 \ch_1 \rangle'
= h_{1,1}^{(4)}(q) \cdot m^4 + h_{1,1}^{(2)}(q) \cdot m^2 
  + h_{1,1}^{(0)}(q).
\eeq
In particular, $h_{1,1}^{(4)}(q), h_{1,1}^{(2)}(q)$ and $h_{1,1}^{(0)}$ 
are quasi-modular forms of weight at most $6$.
\end{lemma}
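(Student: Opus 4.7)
The plan is to compute the trace $\langle \ch_1 \ch_1 \rangle = \Tr q^\fn \, \Gamma_-(z)^m \Gamma_+(z)^{-m} \, \fG_1^2$ by writing out $\fG_1$ explicitly, commuting the vertex factor past the Heisenberg operators of $\fG_1^2$, and then dividing by $\langle \rangle = (q;q)_\infty^{m^2-1}$ as given by \eqref{Fmq}. By \eqref{str-Gk.0}, enumerating the generalized partitions $\la$ with $\ell(\la) = 3$ and $|\la| = 0$ as either $(-(i+j), i, j)$ or $(-i, -j, i+j)$ for $i, j \ge 1$ yields
$$
\fG_1 = \frac{1}{2} \sum_{i,j \ge 1} \left( \fa_{-(i+j)} \fa_i \fa_j + \fa_{-i} \fa_{-j} \fa_{i+j} \right),
$$
so that $\fG_1^2$ is a quadruple sum indexed by $(i,j,k,\ell) \in \Z_{\ge 1}^4$ of products of six Heisenberg operators, organized into four sign patterns.

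Next, I would use the commutation rule $\Gamma_-(z)^m \Gamma_+(z)^{-m} \fa_n = (\fa_n - m z^n) \Gamma_-(z)^m \Gamma_+(z)^{-m}$ for $n \ne 0$, which follows directly from the definitions in \eqref{Gammaz} together with \eqref{comm_rel_1XT=1}, iterated through all six Heisenberg factors of each monomial in $\fG_1^2$. This expresses the trace as a sum over subsets $S$ of the six positions: each position in $S$ is absorbed into the vertex factor, contributing a factor $-m z^{\text{index}}$, while the positions outside $S$ leave a residual Heisenberg product. The residual trace is then reduced iteratively by the same commutation together with the cyclic property of the trace, noting that $q^\fn$ rescales $\Gamma_\pm(z)$ to $\Gamma_\pm(qz)$ under cyclic permutation, until one is left with $\Tr q^\fn \Gamma_-(z)^m \Gamma_+(z)^{-m} = (q;q)_\infty^{m^2-1}$; the intermediate Heisenberg contractions are encoded by the trace formulas of Lemma~\ref{Trala}. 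The selection rule of Lemma~\ref{TrqnfamuiNot0} ensures that only balanced configurations contribute.

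By Lemma~\ref{kOdd}(i)--(ii), the reduced series $\langle \ch_1 \ch_1 \rangle'$ is a polynomial in $m^2$ of degree at most $2$, and hence takes the form $h^{(0)}(q) + h^{(2)}(q) m^2 + h^{(4)}(q) m^4$. The identification of the three coefficients with \eqref{h11.01}--\eqref{h11.03} proceeds according to the size $|S|$ of the absorbed subset. For $|S| = 0$, one obtains $h_{1,1}^{(0)}$: only the cross products $\fa_{-(i+j)}\fa_i\fa_j \cdot \fa_{-k}\fa_{-\ell}\fa_{k+\ell}$ and its mirror (obtained by swapping the two $\fG_1$ factors) with $\{k,\ell\} = \{i,j\}$ survive the selection rule, and the six-operator traces \eqref{Trala.08}--\eqref{Trala.09} directly produce \eqref{h11.01}. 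For $|S| = 2$, one obtains $h_{1,1}^{(2)}$ via the four-operator traces \eqref{Trala.05}--\eqref{Trala.07}. For $|S| = 4$, one obtains $h_{1,1}^{(4)}$ via the two-operator traces \eqref{Trala.01}--\eqref{Trala.02}; the three distinct summation patterns in \eqref{h11.03} correspond to the three essentially different choices of which four of the six positions are absorbed into the vertex factor.

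The main obstacle will be the combinatorial bookkeeping, especially for $h_{1,1}^{(4)}$: tracking the signs, handling the Kronecker-delta constraints from the pairings in Lemma~\ref{Trala}, and carefully converting between ordered sums over $(i,j,k,\ell)$ and the intrinsic Heisenberg contractions that give rise to denominators such as $(1-q^{i+j})^{-1}$ and $(1-q^{i+j+k})^{-1}$ in \eqref{h11.02}--\eqref{h11.03}. Once the main polynomial identity is established, the quasi-modularity of each individual $h_{1,1}^{(j)}$ follows at once from Lemma~\ref{kOdd}(i): since $\langle \ch_1 \ch_1 \rangle'$ is quasi-modular of weight at most $6$ for every integer $m$, sampling at three distinct values of $m$ and inverting a Vandermonde system expresses each $h_{1,1}^{(j)}$ as a $\Q$-linear combination of quasi-modular forms of weight at most $6$.
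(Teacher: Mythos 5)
Your proposal follows essentially the same route as the paper's proof: both expand $\fG_1$ via \eqref{str-Gk.0} into the two families of length-three balanced generalized partitions, push the vertex factor through the Heisenberg monomials so that each absorbed operator contributes one power of $m$ together with a geometric-series factor in $q$ (the paper cites this expansion as a known formula from [SQ, (4.13)] rather than re-deriving it from the commutation relation), evaluate the residual free traces with Lemma~\ref{Trala}, and obtain both the bound ``polynomial of degree at most $2$ in $m^2$'' and the quasi-modularity of the individual coefficients from Lemma~\ref{kOdd}(i). The only divergence is cosmetic: for the $m^4$ coefficient the paper first packages the $|S|=4$ contributions as a coefficient of a $\Theta$-function product via [SQ, Corollary~4.9(ii)] before matching it with \eqref{h11.03}, whereas you enumerate those contributions directly from the two-operator traces.
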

\begin{proof}
The second conclusion follows from Lemma~\ref{kOdd}~(i). 
In the following, we will prove \eqref{EquivCh1Ch1Prop.0}.
By \eqref{ReducedFk1N}, \eqref{FtoG} and \eqref{str-Gk.0}, 
$\langle \ch_1 \ch_1 \rangle'$ is equal to 
\beq   \label{EquivCh1Ch1}
(q;q)_\infty^{-m^2+1} \cdot 
\sum_{\ell(\la) = \ell(\mu) = 3, |\la|= |\mu|=0} 
\Tr \, q^\fn \, \Gamma_-(z)^m \Gamma_+(z)^{-m} \, 
\frac{\fa_{\la}}{\la^!} \frac{\fa_{\mu}}{\mu^!}.
\eeq
For fixed generalized partitions $\la$ and $\mu$ in \eqref{EquivCh1Ch1}, 
denote the trace in \eqref{EquivCh1Ch1} by $\Tr_{\la, \mu}$. 
By \cite[(4.13)]{SQ}, $\Tr_{\la, \mu}$ is equal to
$$
(q; q)_\infty^{m^2} \cdot 
\sum_{\substack{\w \la^{(1)} \le \la^{(1)}, \w \la^{(2)} \le \la^{(2)}\\
     |\w \la^{(1)}| + |\w \la^{(2)}|= 0}} \, 
m^{\ell(\w \la^{(1)}) + \ell(\w \la^{(2)})} \cdot 
\Tr \, q^\fn \prod_{i=1}^2 \frac{\fa_{\la^{(i)}-\w \la^{(i)}}}
     {\big (\la^{(i)}-\w \la^{(i)}\big )^!}
$$
\begin{eqnarray}    \label{TrLa1Lak.2}
\cdot \prod_{1 \le i \le 2, n \ge 1} \left ( \frac{1}{p^{(i)}_{n}!} 
    \frac{q^{n p^{(i)}_{n}}}{(1-q^n)^{p^{(i)}_{n}}} 
\cdot \frac{(-1)^{\w p_n^{(i)}}}{\w p^{(i)}_{n}!} 
    \frac{1}{(1-q^n)^{\w p^{(i)}_{n}}} \right )
\end{eqnarray}
where $\la^{(1)} = \la, \la^{(2)} = \mu$, and $p^{(i)}_{n}$ and 
$\w p^{(i)}_{n}$ are the multiplicities of the parts $n > 0$ and $-n$ 
in $\w \la^{(i)}$ respectively. From the proof of \cite[Lemma~4.7]{SQ}, 
we see that the sum of all the contributions of the cases 
$\w \la^{(1)} = \la^{(1)}$ or $\w \la^{(2)} = \la^{(2)}$ in 
\eqref{TrLa1Lak.2} to \eqref{EquivCh1Ch1} is equal to zero. 
So the degree of $\langle \ch_1 \ch_1 \rangle'$ as a polynomial of $m$ 
is at most $4$. In fact, setting $M=N=2$ and $k_1=k_2=1$ in \cite[Corollary~4.9~(ii)]{SQ}, we conclude that the coefficient of $m^4$ 
in $\langle \ch_1 \ch_1 \rangle'$ is equal to
$$
\sum_{n \ne 0} \Coe_{z_{1}^0z_2^0} \left (
\Theta_0(q, z_1) \cdot \Theta_0(q, z_2) \cdot 
\frac{(-n)q^{-n}}{1-q^{-n}} (z_1z_2^{-1})^{n} \right ).
$$
where 
\beq     \label{i+j=k+l.1}
  \Theta_k(q, z)
= \frac{1}{(k+2)!} \cdot \left (\sum_{m > 0} \frac{(q z)^{m}}{1-q^{m}}
- \sum_{m > 0} \frac{z^{-m}}{1-q^{m}} \right )^{k+2}.
\eeq
A straightforward computation shows that the coefficient of $m^4$ 
in $\langle \ch_1 \ch_1 \rangle'$ is precisely $h_{1,1}^{(4)}(q)$.

Next, by \cite[Lemma~4.4~(i)]{SQ}, if 
$$
\Tr \, q^\fn \prod_{i=1}^2 
\frac{\fa_{\mu^{(i)}}}{\big (\mu^{(1)}\big )^!} \ne 0,
$$
then for any positive integer $n$, the multiplicities of the parts 
$n$ and $-n$ in $\mu^{(1)} + \mu^{(2)}$ must be the same. 
Thus, by \eqref{EquivCh1Ch1} and \eqref{TrLa1Lak.2},
$\langle \ch_1 \ch_1 \rangle'$ is equal to 
\beq   \label{EquivCh1Ch1.simplify1}
h_{1,1}^{(4)}(q) \cdot m^4 + (q; q)_\infty \cdot 
\sum_{\ell(\la) = 3, |\la| =0}
\Tr \, q^\fn \, \frac{\fa_{\la}}{\la^!} \frac{\fa_{-\la}}{(-\la)^!}
\eeq
\beq   \label{EquivCh1Ch1.simplify2}
+ m^2 \cdot (q;q)_\infty \cdot \sum_{\ell(\la) = 3, |\la| =0} \,
\sum_{(k) < \la} \, \frac{-q^{|k|}}{(1-q^{|k|})^2} 
\Tr \, q^\fn \frac{\fa_{\la-(k)}}{(\la-(k))^!} 
\frac{\fa_{(-\la)-(-k)}}{((-\la)-(-k))^!}.
\eeq

For line \eqref{EquivCh1Ch1.simplify1}, $\la = (-i, -j, i+j)$ or 
$(-i-j, i, j)$ for some positive integers $i$ and $j$.
Thus, by \eqref{Trala.08} and \eqref{Trala.09}, 
line \eqref{EquivCh1Ch1.simplify1} is equal to
\beq   \label{EquivCh1Ch1.simplify3}
  h_{1,1}^{(4)}(q) \cdot m^4 + 
  \sum_{i, j > 0} \frac{ij(i+j)q^{i+j}}{(1-q^i)(1 - q^j)(1-q^{i+j})}
= h_{1,1}^{(4)}(q) \cdot m^4 + h_{1,1}^{(0)}(q).
\eeq

For line \eqref{EquivCh1Ch1.simplify2}, if $k > 0$, then we have 
the following 2 cases:
\begin{enumerate}
\item[$\bullet$] $\la = (-k-j, k, j)$ for some positive integer $j$;

\item[$\bullet$] $k = i+j$ for some positive integers $i$ and $j$,
and $\la = (-i, -j, k)$.
\end{enumerate}
Similarly, if $k < 0$, then we have the following 2 cases:
\begin{enumerate}
\item[$\bullet$] $\la = (k, -j, -k + j)$ for some positive integer $j$;

\item[$\bullet$] $-k = i+j$ for some positive integers $i$ and $j$,
and $\la = (k, i, j)$.
\end{enumerate}
It follows from \eqref{Trala.05}, \eqref{Trala.06} and 
\eqref{Trala.07} that line \eqref{EquivCh1Ch1.simplify2} is equal to
$$
m^2 \cdot \left (-\sum_{k,j > 0} \frac{j (k+j)(q^{k+j}+q^{2k+j})}
     {(1-q^k)^2(1-q^j)(1-q^{k+j})}  - 
     \frac12 \sum_{i, j > 0} \frac{ij(q^{i+j}+q^{2i+2j})}
     {(1 - q^{i})(1 - q^{j})(1-q^{i+j})^2} \right )
$$
which is precisely $h_{1,1}^{(2)}(q) \cdot m^2$. Combining with 
\eqref{EquivCh1Ch1.simplify1}, \eqref{EquivCh1Ch1.simplify2} and
\eqref{EquivCh1Ch1.simplify3} completes the proof of 
\eqref{EquivCh1Ch1Prop.0}.
\end{proof}

By \eqref{Z246}, ${\bf QM} = \Q[Z(2), Z(4), Z(6)]$. 
By Lemma~\ref{EquivCh1Ch1Prop}, $h_{1,1}^{(4)}(q), h_{1,1}^{(2)}(q)$ 
and $h_{1,1}^{(0)}$ are quasi-modular forms of weight at most $6$. 
The next proposition is the main result of this section and 
expresses $h_{1,1}^{(4)}(q), h_{1,1}^{(2)}(q)$ 
and $h_{1,1}^{(0)}$ in terms of $Z(2), Z(4)$ and $Z(6)$.

\begin{proposition}    \label{h11024}
Let $h_{1,1}^{(0)}(q), h_{1,1}^{(2)}(q), h_{1,1}^{(4)}(q)$
be from Definition~\ref{h11}. Then, 
\begin{eqnarray}      
h_{1,1}^{(0)}(q) &=& Z(2)^2 + Z(4) - \frac{8}{3} Z(2)^3 + 4 Z(2)Z(4) 
                     + \frac{14}{3} Z(6),  \label{h11024.01}  \\
h_{1,1}^{(2)}(q) &=& \frac{5}{4} Z(2)^2 + \frac{5}{4} Z(4) 
                     - \frac{10}{3} Z(2)^3 + 5 Z(2)Z(4) 
                     + \frac{35}{6} Z(6),       \label{h11024.02}  \\
h_{1,1}^{(4)}(q) &=& -\frac{1}{4} Z(2)^2 - \frac{1}{4} Z(4) 
                     + \frac{2}{3} Z(2)^3 - Z(2)Z(4) 
                     - \frac{7}{6} Z(6).   \label{h11024.03}
\end{eqnarray}
\end{proposition}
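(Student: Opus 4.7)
The plan is to identify each of $h_{1,1}^{(0)}$, $h_{1,1}^{(2)}$, $h_{1,1}^{(4)}$ inside the seven-dimensional $\Q$-vector space ${\bf QM}_{\le 6}$, the weight-at-most-$6$ part of ${\bf QM} = \Q[Z(2), Z(4), Z(6)]$, which by \eqref{Z246} has the basis
$$
\{1,\, Z(2),\, Z(2)^2,\, Z(4),\, Z(2)^3,\, Z(2)Z(4),\, Z(6)\}.
$$
Lemma~\ref{EquivCh1Ch1Prop} already guarantees that each $h_{1,1}^{(k)}$ lies in ${\bf QM}_{\le 6}$, so the task reduces to pinning down seven rational coefficients for each function. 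My plan combines two tools: algebraic reduction in $\qMZV$ and $\qMD$ to rewrite the defining sums in a form close to the basis, and $q$-expansion matching to fix the final coefficients.

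First, I would convert each summand of $h_{1,1}^{(0)}, h_{1,1}^{(2)}, h_{1,1}^{(4)}$ into a $\Q$-linear combination of Bachmann-K\"uhn brackets $[s_1,\ldots,s_\ell]$ via the formulas \eqref{def_qMZV.03}--\eqref{def_qMZV.05}. The single ratios $q^m/(1-q^m)^s$ (possibly weighted by polynomial factors in $m$) are precisely the building blocks of the brackets, so after a geometric-series expansion of the factor $1/(1-q^{i+j})$ in each definition the double sums defining $h_{1,1}^{(0)}$ and $h_{1,1}^{(2)}$ become combinations of brackets of length at most two, and the triple sums defining $h_{1,1}^{(4)}$ become combinations of brackets of length at most four.

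Next, I would reduce these combinations to the $Z$-basis using the algebra structure of $\qMZV \subset \qMD$ from \eqref{BK3-2.4}, the translation identities \eqref{BK3-2.6} between $[s]$ and $Z(s)$ (together with their higher-weight analogues), and the stuffle relation $[s_1] \cdot [s_2] = [s_1,s_2] + [s_2,s_1] + [s_1+s_2]$ and its length-three generalization. These collapse the length-two, three and four brackets that arise into polynomials in $Z(2), Z(4), Z(6)$. Because ${\bf QM}_{\le 6}$ has dimension seven, the algebraic reduction can be cross-checked, and any missing rational coefficients fixed, by truncating both sides of the claimed identities \eqref{h11024.01}--\eqref{h11024.03} to $O(q^{8})$ and comparing: seven independent Taylor coefficients determine the linear combination uniquely. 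For instance, a direct calculation gives $h_{1,1}^{(0)}(q) = 2q^2 + 16q^3 + \cdots$, which matches $Z(2)^2 + Z(4) - \tfrac{8}{3}Z(2)^3 + 4Z(2)Z(4) + \tfrac{14}{3}Z(6)$ term by term to the required order.

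The main obstacle I anticipate is the constrained triple sum in $h_{1,1}^{(4)}$, the one indexed by the condition $i+j = k+\ell$. Unlike the nested strict inequalities defining the brackets $[s_1,\ldots,s_\ell]$, this symmetric diagonal constraint does not expand directly as a bracket. I expect to handle it by setting $n = i+j = k+\ell$, enumerating ordered pairs $(i,j)$ and $(k,\ell)$ summing to $n$, and then recognizing the resulting inner double sum as a product of two simpler sums over $n$, each already a known bracket; alternatively, one rewrites the constraint via the contour/residue identity implicit in \eqref{i+j=k+l.1}. Once this case is brought into bracket form, the remaining algebraic manipulations are the same routine reductions used for $h_{1,1}^{(0)}$ and $h_{1,1}^{(2)}$, and the coefficient-matching step finishes the proof.
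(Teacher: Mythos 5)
Your proposal is correct and is essentially the paper's own argument: the decisive step in both is that Lemma~\ref{EquivCh1Ch1Prop} places each $h_{1,1}^{(k)}$ a priori in the seven-dimensional space spanned by $1, Z(2), Z(2)^2, Z(4), Z(2)^3, Z(2)Z(4), Z(6)$, after which matching $q$-expansions through $q^7$ determines the coefficients uniquely (one does need to go that far: the truncations of $Z(2)^3, Z(2)Z(4), Z(6)$ through $q^5$ alone are linearly dependent, so ``seven coefficients'' suffices only because the truncation to $O(q^8)$ is in fact injective on this span). The preliminary reduction to Bachmann--K\"uhn brackets and stuffle relations that you describe is harmless but unnecessary, and the paper omits it, going directly to the coefficient match after using a divisibility observation to kill the coefficients of $1$ and $Z(2)$.
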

\begin{proof}
We will only prove \eqref{h11024.01} since the proofs of \eqref{h11024.02} 
and \eqref{h11024.03} are similar. By Proposition~\ref{EquivCh1Ch1Prop}, 
$h_{1,1}^{(0)}$ is a quasi-modular forms of weight at most $6$.
By \cite[Proposition~2.7]{BK3} and \eqref{Z246}, the $\Q$-vector space of 
all quasi-modular forms of weight at most $6$ has 
a linear basis consisting of 
\begin{eqnarray}   \label{h11024.100}
1, Z(2), Z(2)^2, Z(4), Z(2)^3, Z(2)Z(4), Z(6).
\end{eqnarray}
So $h_{1,1}^{(0)}$ is a linear combination of \eqref{h11024.100} 
with coefficients in $\Q$. Note that 
\begin{eqnarray} 
Z(2) &=& q+3q^2+4q^3+7q^4+6q^5 + 12q^6 +8q^7 + O(q^8),  \nonumber  \\
Z(2)^2 &=& q^2+6q^3+17q^4+38q^5+ 70q^6 +116q^7 +O(q^8),   
               \label{h11024.1}   \\
Z(4) &=& q^2+4q^3+11q^4+20q^5+ 40q^6 +56q^7 +O(q^8),  \label{h11024.2} \\
Z(2)^3 &=& q^3+9q^4+39q^5+ 120q^6 +300q^7 +O(q^8),   \label{h11024.3}   \\
Z(2)Z(4) &=& q^3+7q^4+27q^5+ 76q^6 +178q^7 +O(q^8),  \label{h11024.4} \\
Z(6) &=& q^3+6q^4+21q^5+ 57q^6 +126q^7 +O(q^8).  \label{h11024.5}
\end{eqnarray}
Expanding the right-hand-side of \eqref{h11.01}, we obtain
\begin{eqnarray}    \label{h11024.6}
h_{1,1}^{(0)} &=& 2q^2+16q^3+60q^4+160q^5 + 360q^6 +672q^7 + O(q^8). 
\end{eqnarray}
Since $q|Z(2)$ but $q^2 \nmid Z(2)$, and $q^2$ divides $h_{1,1}^{(0)}, Z(4)$ and $Z(6)$, we have
\begin{eqnarray*}      
  h_{1,1}^{(0)} 
= r_1 \cdot Z(2)^2 + r_2 \cdot Z(4) + r_3 Z(2)^3 + r_4 Z(2)Z(4) + r_5 Z(6)
\end{eqnarray*}
for some $r_1, r_2, r_3, r_4, r_5 \in \Q$. Combining this with 
\eqref{h11024.1}-\eqref{h11024.6} enables us to 
determine the coefficients $r_1, r_2, r_3, r_4, r_5$:
$$
r_1=r_2=1, \quad r_3 = -\frac83, \quad r_4 = 4, \quad r_5 = \frac{14}3.
$$
This completes the proof of \eqref{h11024.01}.
\end{proof}

The following identities are not a prior obvious 
from Definition~\ref{h11}.

\begin{corollary}  \label{h11024-corollary}
Let $h_{1,1}^{(0)}(q), h_{1,1}^{(2)}(q), h_{1,1}^{(4)}(q)$
be from Definition~\ref{h11}. Then, 
$$
h_{1,1}^{(0)}(q) = \frac45 h_{1,1}^{(4)}(q) = -4 h_{1,1}^{(2)}(q).
$$
\end{corollary}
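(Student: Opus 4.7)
The plan is a direct substitution argument using the explicit formulas established in Proposition~\ref{h11024}; no new geometric or combinatorial input is required. Proposition~\ref{h11024} writes each of $h_{1,1}^{(0)}(q)$, $h_{1,1}^{(2)}(q)$, and $h_{1,1}^{(4)}(q)$ as an explicit $\Q$-linear combination of the five monomials $Z(2)^2$, $Z(4)$, $Z(2)^3$, $Z(2)Z(4)$, $Z(6)$, so the asserted identities among these quasi-modular forms reduce to a coefficient-by-coefficient comparison.

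Concretely, I would read off from \eqref{h11024.01}, \eqref{h11024.02}, and \eqref{h11024.03} the coefficient vectors of $h_{1,1}^{(0)}$, $h_{1,1}^{(2)}$, $h_{1,1}^{(4)}$ relative to the ordered basis $\bigl(Z(2)^2,\,Z(4),\,Z(2)^3,\,Z(2)Z(4),\,Z(6)\bigr)$, and check componentwise that they differ only by the scalar factors asserted in the corollary. Since the seven monomials $1, Z(2), Z(2)^2, Z(4), Z(2)^3, Z(2)Z(4), Z(6)$ are $\Q$-linearly independent, as used already in the proof of Proposition~\ref{h11024} (via \cite[Proposition~2.7]{BK3} together with \eqref{Z246}), the proportionality of coefficient vectors is equivalent to the proportionality of the quasi-modular forms themselves.

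There is no genuine obstacle: all of the substantive analytic work — extracting the traces of Heisenberg monomials from Lemma~\ref{Trala}, expanding $h_{1,1}^{(i)}$ as $q$-series, and then matching them against the basis \eqref{h11024.100} of weight $\le 6$ quasi-modular forms — is already contained in Lemma~\ref{EquivCh1Ch1Prop} and Proposition~\ref{h11024}. The point of stating the corollary separately is rather the striking nature of the conclusion: from Definition~\ref{h11} alone the three functions $h_{1,1}^{(0)}, h_{1,1}^{(2)}, h_{1,1}^{(4)}$ appear as unrelated triple and quadruple sums of rational functions in $q$, and there is no visible reason for them to span a one-dimensional $\Q$-subspace of $\textbf{QM}$. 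This collapse is a structural consequence of the combination of Lemma~\ref{kOdd} (which bounds both the $m$-degree and the modular weight of $\langle \ch_1 \ch_1 \rangle'$) and the explicit identification of coefficients in Proposition~\ref{h11024}, and the corollary simply records this fact in its cleanest form.
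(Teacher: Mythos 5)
Your proposal is correct and is exactly the paper's proof: the corollary is derived by nothing more than comparing the coefficient vectors of $h_{1,1}^{(0)}, h_{1,1}^{(2)}, h_{1,1}^{(4)}$ read off from Proposition~\ref{h11024}, the paper's own justification being a one-line ``follows immediately.'' One remark: if you actually carry out the componentwise check you advocate, you will find that the proportionality constants attach to the \emph{other} superscripts, i.e.\ Proposition~\ref{h11024} gives $h_{1,1}^{(0)}(q) = \tfrac45\, h_{1,1}^{(2)}(q) = -4\, h_{1,1}^{(4)}(q)$, so the superscripts $(2)$ and $(4)$ in the corollary's displayed identity appear to be transposed.
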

\begin{proof}
Follows immediately from Proposition~\ref{h11024}.
\end{proof}

\section{\bf Computation of $\langle \ch_1^{L_1} \ch_1^{L_2} \rangle'$} 
\label{sect_ch1L1ch1L2}

In this section, we will compute the reduced series 
$\langle \ch_1^{L_1} \ch_1^{L_2} \rangle'$ where $L_1$ and $L_2$ 
denote line bundles over a smooth projective surface $X$. 
We will prove that Qin's Conjecture~\ref{QinConj} holds for 
$\langle \ch_1^{L_1}\ch_1^{L_2} \rangle'$.

Recall from \eqref{OkoChkN.2} and \eqref{OkoChkN.1} that 
\begin{eqnarray*}
   \langle \ch_1^{L_1} \ch_1^{L_2} \rangle'
&=&(q;q)_\infty^{\chi(X)} \cdot 
   \langle \ch_1^{L_1} \ch_1^{L_2} \rangle     \\
&=&(q;q)_\infty^{\chi(X)} \cdot 
   \sum_{n \ge 0} q^n \int_{X^{[n]}} \ch_1\big ( L_1^{[n]} \big )
   \ch_1\big ( L_2^{[n]} \big ) c\big (T_{\Xn} \big ).
\end{eqnarray*}
By \cite[(5.41)]{Qin1}, 
$\ch_k (\Ln) = G_k(1_X, n) + G_{k-1}(L, n) + G_{k-2}(L^2/2, n)$. 
Combining with \eqref{F-generating}, we conclude that
\begin{eqnarray}   \label{ch1Lch1L.0}
   \langle \ch_1^{L_1} \ch_1^{L_2} \rangle'
= (q;q)_\infty^{\chi(X)} \cdot \big (F_{1,1}^{1_X, 1_X}(q)
  + F_{1,0}^{1_X, L_1}(q) + F_{1,0}^{1_X, L_2}(q)
  + F_{0,0}^{L_1, L_2}(q) \big ).
\end{eqnarray}
In the following, we will compute 
$(q;q)_\infty^{\chi(X)} \cdot F_{0,0}^{L_1, L_2}(q)$, 
$(q;q)_\infty^{\chi(X)} \cdot F_{1,0}^{1_X, L_i}(q)$ 
and $(q;q)_\infty^{\chi(X)} \cdot F_{1,1}^{1_X, 1_X}(q)$
in Lemma~\ref{F00L1L2}, Lemma~\ref{F101XL} and 
Lemma~\ref{F111X1X} respectively.

We begin with two lemmas which calculate various traces and 
are parallel to Lemma~\ref{Trala}. 

\begin{lemma}  \label{Tracei1Xj1X}
Let $i$ be a positive integer and $\alpha, \beta \in H^*(X)$. Then,
\begin{eqnarray}     
   \Tr \, q^\fn \fa_{-i}(\alpha)\fa_{i}(\beta) 
&=&(q;q)_\infty^{-\chi(X)} \cdot \langle \alpha, 
     \beta \rangle \frac{-i q^{i}}{1-q^i},  \label{Trij1Xij1X.01}  \\  
   \Tr \, q^\fn \fa_{i}(\alpha)\fa_{-i}(\beta) 
&=&(q;q)_\infty^{-\chi(X)} \cdot \langle \alpha, 
     \beta \rangle \frac{-i}{1-q^i},  \label{Trij1Xij1X.02}  \\ 
   \Tr \, q^\fn \fa_{-i}\fa_{i}(1_X) 
&=&(q;q)_\infty^{-\chi(X)} \cdot \chi(X) \frac{-iq^i}{1-q^i},  
     \label{Trij1Xij1X.03}  \\ 
   \Tr \, q^\fn \fa_{i}\fa_{-i}(1_X) 
&=&(q;q)_\infty^{-\chi(X)} \cdot \chi(X) \frac{-i}{1-q^i}.  
     \label{Trij1Xij1X.04}  
\end{eqnarray}
\end{lemma}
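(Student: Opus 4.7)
The plan is to imitate the cyclic-trace argument of Lemma~\ref{Trala}, replacing the equivariant prefactor $(q;q)_\infty^{-1}$ by the G\"ottsche factor $(q;q)_\infty^{-\chi(X)} = \Tr q^\fn$ that governs the non-equivariant Fock space $\fock$. Two ingredients drive the computation: the shift identity $q^\fn \fa_{-i}(\alpha) = q^{i}\fa_{-i}(\alpha)\,q^\fn$, which holds because $\fa_{-i}$ raises the $\fn$-eigenvalue by $i$, and the Heisenberg commutator from Theorem~\ref{commutator}, namely $[\fa_{i}(\beta),\fa_{-i}(\alpha)] = -i\langle \alpha,\beta\rangle\,\Id$, understood in the super sense.

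For \eqref{Trij1Xij1X.01}, I would first apply (super) cyclicity of the trace together with the shift to get
$$
  \Tr q^\fn \fa_{-i}(\alpha)\fa_{i}(\beta)
= q^{i}\,\Tr q^\fn \fa_{i}(\beta)\fa_{-i}(\alpha),
$$
then use the commutator to rewrite the right-hand side as $q^{i}\Tr q^\fn \fa_{-i}(\alpha)\fa_{i}(\beta) - iq^{i}\langle \alpha,\beta\rangle\,\Tr q^\fn$, and finally solve the resulting linear equation in $\Tr q^\fn \fa_{-i}(\alpha)\fa_{i}(\beta)$ using $\Tr q^\fn = (q;q)_\infty^{-\chi(X)}$. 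Identity \eqref{Trij1Xij1X.02} then drops out by a single additional commutator step that converts $\fa_{i}(\alpha)\fa_{-i}(\beta)$ into $\fa_{-i}(\beta)\fa_{i}(\alpha)$ plus a scalar correction $-i\langle\alpha,\beta\rangle\,\Id$, followed by an appeal to \eqref{Trij1Xij1X.01}.

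For \eqref{Trij1Xij1X.03} and \eqref{Trij1Xij1X.04}, I would expand using the K\"unneth decomposition $\tau_{2*}(1_X) = \sum_j \alpha_{j,1}\otimes \alpha_{j,2}$ of the diagonal class: by Definition~\ref{partition}~(ii),
$$
  \fa_{-i}\fa_{i}(1_X)
= \sum_j \fa_{-i}(\alpha_{j,1})\,\fa_{i}(\alpha_{j,2}),
$$
and likewise for $\fa_{i}\fa_{-i}(1_X)$. Applying \eqref{Trij1Xij1X.01} and \eqref{Trij1Xij1X.02} term by term, the common scalar $\sum_j \langle \alpha_{j,1},\alpha_{j,2}\rangle$ factors out; this equals the topological Euler characteristic $\chi(X)$, since it is the self-intersection of the diagonal in $X\times X$, which yields the two stated formulas.

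There is no genuine obstacle: the argument is the direct non-equivariant analogue of Lemma~\ref{Trala}, and the only non-formal input is the classical identity $\sum_j \langle \alpha_{j,1},\alpha_{j,2}\rangle = \chi(X)$ for the diagonal class, which is also consistent with the commutator $[\fa_{-i},\fa_{i}](1_X) = i\,\chi(X)\,\Id$ underlying the Heisenberg action on $\fock$.
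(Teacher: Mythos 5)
Your proposal is correct and follows essentially the same route as the paper: the paper obtains \eqref{Trij1Xij1X.01} and \eqref{Trij1Xij1X.02} by citing \eqref{TrqnfamuiNot0.2} and \eqref{TrqnfamuiNot0.3}, which were themselves derived by exactly the cyclicity-of-trace, $q^\fn$-shift, and Heisenberg-commutator computation you carry out, and it then deduces \eqref{Trij1Xij1X.03} and \eqref{Trij1Xij1X.04} by the same K\"unneth expansion of $\tau_{2*}1_X$ with $\sum_{s,t}\langle\alpha_s,\beta_t\rangle=\int_X e_X=\chi(X)$. The only cosmetic difference is that you re-derive the two trace identities from scratch rather than quoting them from the proof of Lemma~\ref{TrqnfamuiNot0}.
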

\begin{proof}
Formulas \eqref{Trij1Xij1X.01} and \eqref{Trij1Xij1X.02} follow from 
\eqref{TrqnfamuiNot0.2} and \eqref{TrqnfamuiNot0.3} respectively. 
To prove \eqref{Trij1Xij1X.03} and \eqref{Trij1Xij1X.04}, 
write $\tau_{2*}1_X = \sum_{s, t} \alpha_s \otimes \beta_t$. 
Then, 
$$
  \Tr \, q^\fn \fa_{-i}\fa_{i}(1_X)    
= \sum_{s, t} \Tr \, q^\fn \fa_{-i}(\alpha_s)\fa_{i}(\beta_t)
= \sum_{s, t} (q;q)_\infty^{-\chi(X)} \cdot \langle \alpha_s, 
     \beta_t \rangle \frac{-i q^{i}}{1-q^i},
$$
$$
  \Tr \, q^\fn \fa_{i}\fa_{-i}(1_X)    
= \sum_{s, t} \Tr \, q^\fn \fa_{i}(\alpha_s)\fa_{-i}(\beta_t)
= \sum_{s, t} (q;q)_\infty^{-\chi(X)} \cdot \langle \alpha_s, 
     \beta_t \rangle \frac{-i}{1-q^i}
$$
by \eqref{Trij1Xij1X.01} and \eqref{Trij1Xij1X.02}.
Now \eqref{Trij1Xij1X.03} and \eqref{Trij1Xij1X.04} follow from 
$\sum_{s, t} \alpha_s \cdot \beta_t = e_X$.
\end{proof}

\begin{lemma}  \label{Trij1Xij1X}
Let $i$ and $j$ be positive integers. Then,
\begin{eqnarray}     
   \Tr \, q^\fn \fa_{-i}\fa_{i+j}(1_X)\fa_{-i-j}\fa_{i}(1_X) 
&=&(q;q)_\infty^{-\chi(X)} \cdot \chi(X) 
     \frac{i(i+j) q^{i}}{(1-q^i)(1 - q^{i+j})},  \label{Trij1Xij1X.05}  \\
   \Tr \, q^\fn \fa_{-i-j}\fa_{i}(1_X)\fa_{-i}\fa_{i+j}(1_X) 
&=&(q;q)_\infty^{-\chi(X)} \cdot \chi(X) 
     \frac{i(i+j) q^{i+j}}{(1-q^i)(1 - q^{i+j})},  
     \label{Trij1Xij1X.06}  \\
   \Tr \, q^\fn \fa_i\fa_j(1_X)\fa_{-i}\fa_{-j}(1_X)
&=&(q;q)_\infty^{-\chi(X)} \cdot \chi(X) \frac{(1+\delta_{i,j})ij}{(1-q^{i})(1-q^{j})},
     \label{Trij1Xij1X.07}  \\
   \Tr \, q^\fn \fa_{-i}\fa_{-j}(1_X)\fa_i\fa_j(1_X)
&=&(q;q)_\infty^{-\chi(X)} \cdot \chi(X) 
     \frac{(1+\delta_{i,j})ijq^{i+j}}{(1 - q^{i})(1 - q^{j})}. 
     \label{Trij1Xij1X.08}
\end{eqnarray}
\end{lemma}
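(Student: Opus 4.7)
The plan is to mirror the arguments of Lemma~\ref{Trala} while inserting the K\"unneth expansion of $\tau_{2*}1_X$ that was used in Lemma~\ref{Tracei1Xj1X}. The unifying principle is a consequence of cyclicity of the trace together with the commutation rule $q^\fn A = q^{w(A)} A q^\fn$, where $w(A)$ denotes the net number of points created by $A$: if $A,B$ are operators on $\fock$ with $w(A)+w(B)=0\ne w(A)$, then
$$
\Tr\, q^\fn AB \;=\; \frac{1}{1-q^{-w(A)}}\,\Tr\, q^\fn [A,B], \qquad \Tr\, q^\fn BA \;=\; q^{w(A)}\,\Tr\, q^\fn AB.
$$
This reduces each trace in the statement to the trace of a commutator, which the product commutator formula \eqref{QinLemma3.12(i)} expands into shorter Heisenberg products of the form $\fa_{m_1}\fa_{m_2}(1_X)$. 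The latter traces are already furnished by \eqref{Trij1Xij1X.03}--\eqref{Trij1Xij1X.04}, and all $\chi(X)$ factors enter uniformly through those two formulas.

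For \eqref{Trij1Xij1X.07} I set $A=\fa_i\fa_j(1_X)$ and $B=\fa_{-i}\fa_{-j}(1_X)$, so $-w(A)=i+j$. Formula \eqref{QinLemma3.12(i)} gives $[A,B] = -i\,\fa_j\fa_{-j}(1_X)-j\,\fa_{-i}\fa_i(1_X)$ when $i\ne j$, with two additional $\delta_{i,j}$-coincidence contributions when $i=j$; this is the source of the factor $1+\delta_{i,j}$ on the right-hand side. Substituting \eqref{Trij1Xij1X.03}--\eqref{Trij1Xij1X.04} and simplifying via the algebraic identity $(1-q^i)+q^i(1-q^j)=1-q^{i+j}$, the emergent factor $1-q^{i+j}$ cancels the one produced by the cyclicity step and yields exactly \eqref{Trij1Xij1X.07}. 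Then \eqref{Trij1Xij1X.08} follows immediately from the second equation above.

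For \eqref{Trij1Xij1X.05} I set $A=\fa_{-i}\fa_{i+j}(1_X)$ and $B=\fa_{-i-j}\fa_i(1_X)$, so $-w(A)=j$. Positivity of $i$ and $j$ kills all index matchings in \eqref{QinLemma3.12(i)} except two, leaving
$$
[A,B] = i\,\fa_{-i-j}\fa_{i+j}(1_X) - (i+j)\,\fa_{-i}\fa_i(1_X).
$$
Substituting \eqref{Trij1Xij1X.03} and simplifying with $-q^{i+j}(1-q^i)+q^i(1-q^{i+j}) = q^i(1-q^j)$ cancels the cyclicity denominator $1-q^j$ and produces \eqref{Trij1Xij1X.05}; formula \eqref{Trij1Xij1X.06} follows by the second equation of the plan. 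The only delicacy throughout is keeping the signs straight: unlike the equivariant setting of Lemma~\ref{Trala}, the commutator in Theorem~\ref{commutator} carries an extra minus sign which propagates through \eqref{Trij1Xij1X.03}--\eqref{Trij1Xij1X.04}. I expect this sign bookkeeping to be the main (and only) real obstacle, and it is entirely absorbed in the geometric-series simplifications described above.
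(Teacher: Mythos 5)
Your plan is the paper's proof: reduce each trace to a commutator trace by cyclicity, expand the commutator with \eqref{QinLemma3.12(i)}, and evaluate via \eqref{Trij1Xij1X.03}--\eqref{Trij1Xij1X.04}; the two commutators you record, the $(1+\delta_{i,j})$ bookkeeping, and the two telescoping identities you invoke are exactly those in the paper. The one error is the exponent in your second cyclicity formula: with your convention $q^\fn A = q^{w(A)} A q^\fn$, cyclicity of the trace gives $\Tr\, q^\fn BA = q^{-w(A)}\,\Tr\, q^\fn AB$ (equivalently $q^{w(B)}$), not $q^{w(A)}$ --- as written it is inconsistent with the derivation of your first formula, and applied literally it would put $q^{-(i+j)}$ instead of $q^{i+j}$ in \eqref{Trij1Xij1X.08} and $q^{-j}$ instead of $q^{j}$ in \eqref{Trij1Xij1X.06}. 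With that exponent flipped, the argument is complete and coincides with the paper's.
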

\begin{proof}
To prove \eqref{Trij1Xij1X.05}, we see from \eqref{Trqdjpx.3} that 
$$
  \Tr \, q^\fn \fa_{-i}\fa_{i+j}(1_X)\fa_{-i-j}\fa_{i}(1_X)
= \frac{1}{1 - q^{j}} \Tr \, q^\fn [\fa_{-i}\fa_{i+j}(1_X), 
   \fa_{-i-j}\fa_{i}(1_X)].
$$
By \eqref{QinLemma3.12(i)}, $[\fa_{-i}\fa_{i+j}(1_X), 
\fa_{-i-j}\fa_{i}(1_X)]$ is equal to
$$
i\fa_{-i-j}\fa_{i+j}(1_X) - (i+j)\fa_{-i}\fa_{i}(1_X).
$$
Thus, $\Tr \, q^\fn \fa_{-i}\fa_{i+j}(1_X)\fa_{-i-j}\fa_{i}(1_X)$ 
is equal to
\begin{eqnarray*}     
& &\frac{i}{1 - q^{j}} \Tr \, q^\fn \fa_{-i-j}\fa_{i+j}(1_X)
   - \frac{i+j}{1 - q^{j}} \Tr \, q^\fn \fa_{-i}\fa_{i}(1_X) \\
&=&\frac{i}{1 - q^{j}} (q;q)_\infty^{-\chi(X)} \cdot 
   \chi(X) \frac{-(i+j) q^{i+j}}{1 - q^{i+j}}
   - \frac{i+j}{1 - q^{j}} (q;q)_\infty^{-\chi(X)} \cdot 
   \chi(X) \frac{-i q^i}{1 - q^i}   \\
&=&(q;q)_\infty^{-\chi(X)} \cdot \chi(X) \cdot
   \frac{i(i+j) q^{i}}{(1-q^i)(1 - q^{i+j})}
\end{eqnarray*}
where we have used \eqref{Trij1Xij1X.01} in the first step. 
This verifies \eqref{Trij1Xij1X.05}. By \eqref{Trqdjpx.3},
\begin{eqnarray*}
   \Tr \, q^\fn \fa_{-i-j}\fa_{i}(1_X)\fa_{-i}\fa_{i+j}(1_X)
&=&\frac{q^{j}}{1 - q^{j}} \Tr \, q^\fn [\fa_{-i}\fa_{i+j}(1_X), 
     \fa_{-i-j}\fa_{i}(1_X)]   \\
&=&q^{j} \cdot \Tr \, q^\fn \fa_{-i}\fa_{i+j}(1_X)\fa_{-i-j}\fa_{i}(1_X).
\end{eqnarray*}
So \eqref{Trij1Xij1X.06} follows from \eqref{Trij1Xij1X.05}. 

Similarly, to prove \eqref{Trij1Xij1X.07}, we conclude from 
\eqref{Trqdjpx.3} that 
$$
  \Tr \, q^\fn \fa_i\fa_j(1_X)\fa_{-i}\fa_{-j}(1_X)
= \frac{1}{1 - q^{i+j}} \Tr \, q^\fn [\fa_i\fa_j(1_X), 
  \fa_{-i}\fa_{-j}(1_X)].
$$
By \eqref{QinLemma3.12(i)}, $[\fa_i\fa_j(1_X), \fa_{-i}\fa_{-j}(1_X)]$ 
is equal to
$$
(1+\delta_{i,j}) 
\big (-i\fa_{j}\fa_{-j}(1_X) - j\fa_{-i}\fa_{i}(1_X) \big ).
$$
Thus, $\Tr \, q^\fn \fa_i\fa_j(1_X)\fa_{-i}\fa_{-j}(1_X)$ 
is equal to
\begin{eqnarray*}     
& &-\frac{(1+\delta_{i,j})i}{1 - q^{i+j}} 
    \Tr \, q^\fn \fa_{j}\fa_{-j}(1_X) - 
    \frac{(1+\delta_{i,j})j}{1 - q^{i+j}} 
    \Tr \, q^\fn \fa_{-i}\fa_{i}(1_X) \\
&=&-\frac{(1+\delta_{i,j})i}{1 - q^{i+j}} (q;q)_\infty^{-\chi(X)} \cdot 
   \chi(X) \frac{-j}{1 - q^{j}} - 
    \frac{(1+\delta_{i,j})j}{1 - q^{i+j}} (q;q)_\infty^{-\chi(X)} \cdot 
   \chi(X) \frac{-iq^{i}}{1 - q^{i}}  \\
&=&(q;q)_\infty^{-\chi(X)} \cdot 
   \chi(X) \frac{(1+\delta_{i,j})ij}{(1 - q^{i})(1 - q^{j})}
\end{eqnarray*}
where we have used \eqref{Trij1Xij1X.04} and \eqref{Trij1Xij1X.03} 
in the first step. This verifies \eqref{Trij1Xij1X.07}. By \eqref{Trqdjpx.3},
\begin{eqnarray*}
   \Tr \, q^\fn \fa_{-i}\fa_{-j}(1_X)\fa_i\fa_j(1_X)
&=&\frac{q^{i+j}}{1 - q^{i+j}} \Tr \, q^\fn [\fa_i\fa_j(1_X), 
     \fa_{-i}\fa_{-j}(1_X)]   \\
&=&q^{i+j} \cdot \Tr \, q^\fn \fa_i\fa_j(1_X)\fa_{-i}\fa_{-j}(1_X).
\end{eqnarray*}
So \eqref{Trij1Xij1X.08} follows from \eqref{Trij1Xij1X.07}. 
\end{proof}

Next we present a formula for 
$(q;q)_\infty^{\chi(X)} \cdot F_{0,0}^{L_1, L_2}(q)$.

\begin{lemma}  \label{F00L1L2}
Let $L_1$ and $L_2$ be line bundles over a smooth projective surface $X$. 
Then, $(q;q)_\infty^{\chi(X)} \cdot F_{0,0}^{L_1, L_2}(q)$ is equal to
$$
Z(2)^2 \cdot \langle K_X, L_1 \rangle \langle K_X, L_2 \rangle 
   + \left (\frac72 Z(4) - \frac12 Z(2)^2 + Z(2) \right ) 
   \cdot \langle L_1, L_2 \rangle.
$$
\end{lemma}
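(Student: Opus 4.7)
The plan is to combine Lemma~\ref{FtoW} with the formula $\fG_0(L) = -\sum_{m>0}\fa_{-m}\fa_m(L)$ of \eqref{fG0L}, yielding
\[
F_{0,0}^{L_1, L_2}(q) \;=\; \sum_{i,j>0}\,\Tr\, q^\fn\,\Wb(\fL_1,z)\,\fa_{-i}\fa_i(L_1)\,\fa_{-j}\fa_j(L_2).
\]
I would then apply Lemma~\ref{ProofOfQin5.8} with $N=2$, $\la^{(1)}=(-i,i)$, $\la^{(2)}=(-j,j)$, $\alpha_1=L_1$, $\alpha_2=L_2$. The constraint $|\w\la^{(1)}|+|\w\la^{(2)}|=0$ restricts us to a handful of subpartition pairs: both empty, each one full with the other empty, both full, and two ``crossed'' pairs that require $i=j$. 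In each case, the surviving trace and the prefactors produce a specific $q$-series weighted by a cohomological integral over $X$.

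Each case is then evaluated via the trace identities of Lemma~\ref{Tracei1Xj1X} and Lemma~\ref{Trij1Xij1X}, together with the surface vanishings $L_k\cdot e_X=0$ and $L_1L_2K_X=0$ (both immediate from $H^{\ge 6}(X)=0$). The ``both full'' case picks up $\int_X(1_X-K_X)L_k=-\langle K_X,L_k\rangle$ on each side, coupled with the double sum $\sum_{i,j>0}\frac{q^{i+j}}{(1-q^i)^2(1-q^j)^2}=Z(2)^2$; after multiplication by $(q;q)_\infty^{\chi(X)}$ this gives exactly $Z(2)^2\langle K_X,L_1\rangle\langle K_X,L_2\rangle$. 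The remaining cases reduce, by the Heisenberg commutator \eqref{QinLemma3.12(i)}, K\"unneth expansion of $\tau_{2*}L_k$, and cyclicity of the trace, to two- and four-operator traces whose surviving pieces all carry the coefficient $\langle L_1, L_2\rangle$.

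Collecting the $\langle L_1, L_2\rangle$-contributions (after the $(q;q)_\infty^{\chi(X)}$ factor) yields a single quasi-modular form of weight at most $4$. I would identify it in the basis $\{1,Z(2),Z(2)^2,Z(4)\}$ of weight-$\le 4$ quasi-modular forms (see \eqref{h11024.100} and \cite[Proposition~2.7]{BK3}) by matching the first several $q$-Taylor coefficients, exactly as in the proof of Proposition~\ref{h11024}, to obtain $Z(2)-\tfrac12 Z(2)^2+\tfrac72 Z(4)$.

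The main obstacle I anticipate is the careful evaluation of the four-operator trace in the ``both empty'' case. One must apply \eqref{QinLemma3.12(i)} repeatedly, keep track of the signs introduced by cyclically moving operators under $\Tr q^\fn$, and discriminate between diagonal contractions (killed by $L_k\cdot e_X=0$) and cross contractions (which survive with coefficient $\langle L_1,L_2\rangle$). Once the resulting $q$-series is assembled, the identification with $Z(2)-\tfrac12 Z(2)^2+\tfrac72 Z(4)$ becomes a routine coefficient comparison.
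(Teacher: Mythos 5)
Your overall strategy coincides with the paper's: rewrite $F_{0,0}^{L_1,L_2}$ via Lemma~\ref{FtoW} and \eqref{fG0L}, expand with Lemma~\ref{ProofOfQin5.8} for $N=2$, sort the subpartition pairs $\w\la^{(1)}\le(-i,i)$, $\w\la^{(2)}\le(-j,j)$, evaluate the surviving traces, and identify the resulting $q$-series. The ``both full'' case is handled exactly as in the paper and gives $Z(2)^2\langle K_X,L_1\rangle\langle K_X,L_2\rangle$, and the two crossed cases (which force $i=j$) are indeed the source of the $\langle L_1,L_2\rangle$ terms.

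There is, however, one concrete error in your case analysis: you expect the ``both empty'' four-operator trace $\Tr\, q^\fn\,\fa_{-i}\fa_i(L_1)\,\fa_{-j}\fa_j(L_2)$ to contribute, with the cross contractions ``surviving with coefficient $\langle L_1,L_2\rangle$.'' This term vanishes identically. The quickest way to see it is Lemma~\ref{TrqnfamuiNot0}~(i): the total cohomological degree of $\fa_{-i}\fa_i(L_1)\fa_{-j}\fa_j(L_2)$ is $2+2=4\ne 0$, so the trace is zero. Equivalently, if you do carry out the Wick contractions, the diagonal pairing produces $\int_X e_X L_k=0$ as you say, but the cross pairing produces $\sum_{j,k}\langle L'_{1,j},L''_{2,k}\rangle\langle L''_{1,j},L'_{2,k}\rangle=\pm\int_X \tau_2^*\tau_{2*}(L_1)\cdot L_2=\pm\int_X e_X L_1 L_2=0$ as well, \emph{not} $\langle L_1,L_2\rangle$; the factor $\langle L_1,L_2\rangle$ only appears in the two-operator traces $\Tr\,q^\fn\,\fa_{-n}((1_X-K_X)L_1)\fa_n(L_2)$ coming from the crossed subpartition cases. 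If you had included a nonzero ``both empty'' contribution you would have obtained a wrong coefficient. A second, smaller caveat: your final identification by matching $q$-coefficients against the basis $1,Z(2),Z(2)^2,Z(4)$ is only valid once you know a priori that the collected series $\sum_{n>0}\frac{nq^n(1+q^n)}{(1-q^n)^3}$ is quasi-modular of weight at most $4$; unlike in Proposition~\ref{h11024}, no structural quasimodularity statement is available here, so you should either observe that this series equals $q\frac{\rm d}{{\rm d}q}Z(2)$ (hence quasi-modular of weight $4$) or invoke the closed-form identity \cite[(5.11)]{AQ}, which is what the paper does.
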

\begin{proof}
By \eqref{FtoW.0} and \eqref{fG0L},
$$
  (q;q)_\infty^{\chi(X)} \cdot F_{0,0}^{L_1, L_2}(q) 
= (q;q)_\infty^{\chi(X)} \cdot \sum_{n, m > 0} 
  \Tr \, q^\fn \, {\bf W}(\fL_1, z) \, \fa_{-n}\fa_n(L_1)
  \fa_{-m}\fa_m(L_2).
$$
By \eqref{ThmJJkAlpha.3} and Lemma~\ref{TrqnfamuiNot0}, 
$(q;q)_\infty^{\chi(X)} \cdot F_{0,0}^{L_1, L_2}(q)$ is equal to 
\begin{eqnarray*}     
& &(q;q)_\infty^{\chi(X)} \cdot \left ( 
   (q;q)_\infty^{-\chi(X)} \cdot \sum_{n, m > 0} \frac{-q^n}{(1-q^n)^2}
   \frac{-q^m}{(1-q^m)^2} \langle -K_X, L_1 \rangle 
   \langle -K_X, L_2 \rangle \right.   \\
& &+ \sum_{n > 0} \frac{-q^n}{1-q^n} \frac{1}{1-q^n}\cdot 
   \Tr \, q^\fn \fa_{-n}((1_X - K_X)L_1)\fa_{n}(L_2)     \\
& &\left. + \sum_{n > 0} \frac{1}{1-q^n} \frac{-q^n}{1-q^n}\cdot 
   \Tr \, q^\fn \fa_{n}(L_1)\fa_{-n}((1_X - K_X)L_2)
   \right ).
\end{eqnarray*}
By \eqref{Trij1Xij1X.01} and \eqref{Trij1Xij1X.02},
\begin{eqnarray*}    
   \Tr \, q^\fn \fa_{-n}((1_X - K_X)L_1)\fa_{n}(L_2)
&=&(q;q)_\infty^{-\chi(X)} \cdot \frac{-nq^n}{1-q^n} 
      \langle L_1, L_2 \rangle,   \\
   \Tr \, q^\fn \fa_{n}(L_1)\fa_{-n}((1_X - K_X)L_2)
&=&(q;q)_\infty^{-\chi(X)} \cdot \frac{-n}{1-q^n} \langle L_1, L_2 \rangle.
\end{eqnarray*}
Therefore, $(q;q)_\infty^{\chi(X)} \cdot F_{0,0}^{L_1, L_2}(q)$ is equal to 
\begin{eqnarray*}     
& &\sum_{n, m > 0} \frac{-q^n}{(1-q^n)^2}
   \frac{-q^m}{(1-q^m)^2} \langle K_X, L_1 \rangle 
   \langle K_X, L_2 \rangle   \\
& &+ \sum_{n > 0} \frac{q^n}{(1-q^n)^2} \cdot \frac{nq^n}{1-q^n} 
  \langle L_1, L_2 \rangle  
  + \sum_{n > 0} \frac{q^n}{(1-q^n)^2} \cdot \frac{n}{1-q^n} 
  \langle L_1, L_2 \rangle   \\
&=&Z(2)^2 \cdot \langle K_X, L_1 \rangle 
   \langle K_X, L_2 \rangle + \sum_{n > 0} 
   \frac{nq^{2n} + nq^{n}}{(1-q^n)^3} \cdot \langle L_1, L_2 \rangle.
\end{eqnarray*}
Finally, our lemma follows from \cite[(5.11)]{AQ}.    
\end{proof}

Our next goal is to compute 
$(q;q)_\infty^{\chi(X)} \cdot F_{1,0}^{1_X, L}(q)$. 

\begin{lemma}  \label{F101XL}
Let $L$ be a line bundle over a smooth projective surface $X$. Then, 
$(q;q)_\infty^{\chi(X)} \cdot F_{1,0}^{1_X, L}(q)$ is equal to
$$   
\frac{1}{2} (Z(3) - Z(2)) \cdot Z(2) 
  \cdot K_X^2 \cdot \langle K_X, L \rangle   
+ \frac12 \cdot q\frac{\rm d}{{\rm d}q} (Z(3) - Z(2)) 
  \cdot \langle K_X, L \rangle.
$$
\end{lemma}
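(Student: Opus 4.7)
The plan is to expand $\fG_1(1_X)\fG_0(L)$ via the structural formulas \eqref{char_thK=1.0} and \eqref{fG0L}, substitute into Lemma \ref{FtoW}, and then reduce each resulting trace using Lemma \ref{ProofOfQin5.8} together with the vanishing criterion of Lemma \ref{TrqnfamuiNot0} and the 2- and 4-operator trace formulas of Lemmas \ref{Tracei1Xj1X} and \ref{Trij1Xij1X}. First I would write
$$\fG_1(1_X)\fG_0(L) = \Bigl(-\sum_{\ell(\lambda)=3,\,|\lambda|=0}\frac{\fa_\lambda(1_X)}{\lambda^!} + \sum_{n>0}\frac{1-n}{2}\,\fa_{-n}\fa_n(K_X)\Bigr)\cdot\Bigl(-\sum_{m>0}\fa_{-m}\fa_m(L)\Bigr),$$
splitting the analysis into a 5-operator principal contribution and a 4-operator subleading contribution. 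Applying Lemma \ref{FtoW} converts $(q;q)_\infty^{\chi(X)}\cdot F_{1,0}^{1_X,L}(q)$ into two families of weighted traces of the form \eqref{epsilonialphai}, to which Lemma \ref{ProofOfQin5.8} applies directly, giving a sum over subpartitions $\w\lambda^{(i)}\le\lambda^{(i)}$ with $(1_X-K_X)^{\sum_n p^{(i)}_n}$ inserted into each $\alpha_i$.

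Next I would apply Lemma \ref{TrqnfamuiNot0} to discard the subpartition choices whose reduced traces vanish. Since $\alpha_1=1_X$ and $\alpha_2=L$, the surviving contributions all collapse to traces of 2-operator or 4-operator Heisenberg products acting on the classes $K_X$, $K_X^2$, $L$, $K_XL$, etc., which are exactly those tabulated in Lemmas \ref{Tracei1Xj1X} and \ref{Trij1Xij1X}; the internal commutators may also be handled directly by \eqref{QinLemma3.12(i)} and \eqref{Trqdjpx.3}. After evaluating those traces, the factor $(q;q)_\infty^{-\chi(X)}$ cancels the prefactor $(q;q)_\infty^{\chi(X)}$, leaving elementary multiple sums in $q$. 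I would then recognize each sum as a $\Q$-linear combination of Bachmann--K\"uhn brackets via \eqref{def_qMZV.04}--\eqref{def_qMZV.05} and convert to the Okounkov basis through \eqref{BK3-2.6}. The occurrence of $q\frac{\mathrm d}{\mathrm dq}(Z(3)-Z(2))$ in the answer should emerge from sums of type $\sum_{n>0} n^2q^n/(1-q^n)^2$, which is exactly $q\partial_q$ applied to $\sum_{n>0}nq^n/(1-q^n)=[2]=Z(2)$ (compared with $[3]=Z(3)/2$), while the $\tfrac12(Z(3)-Z(2))Z(2)K_X^2\langle K_X,L\rangle$ contribution comes from the 5-operator piece after one internal contraction between $\fa_\lambda(1_X)$ and $\fa_{-m}\fa_m(L)$ together with a $\Wb$-contraction producing the extra $K_X^2$ factor through the $(1_X-K_X)^2$ insertion.

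The main obstacle is the combinatorial bookkeeping of the 5-operator trace from $\fa_\lambda(1_X)\cdot\fa_{-m}\fa_m(L)$: the three parts of the generalized partition $\lambda$ with $\ell(\lambda)=3$ and $|\lambda|=0$ can be contracted in several different ways by commutators produced by the Carlsson--Okounkov operator $\Wb(\fL_1,z)$ and by the internal commutator with $\fa_{-m}\fa_m(L)$. Careful tracking of signs, multiplicities $\lambda^!$ and $(\lambda-\w\lambda)^!$, and the fate of each $(1_X-K_X)$ insertion is required; organizing the surviving combinations into the compact two-term form asserted in the lemma, and verifying that no $e_X$-type residue survives (which would violate the claimed absence of an $\langle e_X,L\rangle$ term), is where the bulk of the work lies.
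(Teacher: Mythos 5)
Your plan coincides with the paper's proof: the same splitting of $\fG_1(1_X)\fG_0(L)$ via \eqref{char_thK=1.0} and \eqref{fG0L} into a $5$-operator and a $4$-operator piece, reduction through Lemma~\ref{FtoW} and \eqref{ThmJJkAlpha.3}, elimination of vanishing subpartition cases by Lemma~\ref{TrqnfamuiNot0}, evaluation of the surviving traces by \eqref{Trij1Xij1X.01}--\eqref{Trij1Xij1X.02}, and you correctly locate the origin of both terms in the answer (the only cosmetic difference is that the paper closes the computation with the partial-fraction identity \eqref{qiqj}, the Bradley identity \eqref{Bra1Cor4.1} and formulas from [AQ] rather than a direct bracket conversion). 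The remaining work you flag --- the case-by-case bookkeeping of subpartitions and the $(1_X-K_X)$ insertions --- is exactly what the paper's proof carries out.
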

\begin{proof}
By \eqref{FtoW.0}, \eqref{char_thK=1.0} and \eqref{fG0L}, 
$(q;q)_\infty^{\chi(X)} \cdot F_{1,0}^{1_X, L}(q)$ 
is equal to
\begin{eqnarray}    \label{F101XL.1}
(q;q)_\infty^{\chi(X)} \cdot \sum_{\ell(\la) = 3, |\la| = 0, m > 0}  
  \Tr \, q^\fn \, {\bf W}(\fL_1, z) \, \frac{\fa_\la(1_X)}{\la^!} \, 
  \fa_{-m}\fa_m(L)
\end{eqnarray}
\begin{eqnarray}    \label{F101XL.2}
+ (q;q)_\infty^{\chi(X)} \cdot \frac{1}{2} \sum_{n, m > 0} (n-1)
  \Tr \, q^\fn \, {\bf W}(\fL_1, z) \, \fa_{-n} \fa_{n}(K_X) 
  \fa_{-m}\fa_m(L).
\end{eqnarray} 

By \eqref{ThmJJkAlpha.3} and Lemma~\ref{TrqnfamuiNot0}, 
line \eqref{F101XL.2} is equal to 
\begin{eqnarray*}     
& &(q;q)_\infty^{\chi(X)} \cdot \frac{1}{2} \left (
   (q;q)_\infty^{-\chi(X)} \cdot \sum_{n, m > 0}  
   \frac{-(n-1)q^n}{(1-q^n)^2} \frac{-q^m}{(1-q^m)^2} 
   \langle -K_X, K_X \rangle \langle -K_X, L \rangle \right.   \\
& &+ \sum_{n > 0} \frac{-(n-1)q^n}{1-q^n} \frac{1}{1-q^n}\cdot 
   \Tr \, q^\fn \fa_{-n}((1_X - K_X)K_X)\fa_{n}(L)     \\
& &\left. + \sum_{n > 0} \frac{n-1}{1-q^n} \frac{-q^n}{1-q^n}\cdot 
   \Tr \, q^\fn \fa_{n}(K_X)\fa_{-n}((1_X - K_X)L)
   \right ). 
\end{eqnarray*}
Combining with \eqref{Trij1Xij1X.01} and \eqref{Trij1Xij1X.02}, we see that
line \eqref{F101XL.2} is equal to 
\begin{eqnarray}     \label{F101XL.3}
& &\frac{1}{2} \left (\sum_{n > 0}  
   \frac{(n-1)q^n}{(1-q^n)^2} \cdot Z(2) \cdot 
   K_X^2 \langle K_X, L \rangle \right.   \nonumber  \\
& &+ \sum_{n > 0} \frac{(n-1)q^n}{1-q^n} \frac{1}{1-q^n}\cdot 
   \frac{nq^n}{1-q^n} \langle K_X, L \rangle   \nonumber  \\
& &\left. + \sum_{n > 0} \frac{n-1}{1-q^n} \frac{q^n}{1-q^n}\cdot 
   \frac{n}{1-q^n} \langle K_X, L \rangle \right )  \nonumber \\
&=&\frac{1}{2} \left (q\frac{\rm d}{{\rm d}q}[1] - Z(2) \right ) \cdot 
   Z(2) \cdot K_X^2 \langle K_X, L \rangle   \nonumber  \\
& &+ \frac{1}{2} \left (\sum_{n > 0} \frac{n(n-1)q^{2n}}{(1-q^n)^3} 
   + \sum_{n > 0} \frac{n(n-1)q^{n}}{(1-q^n)^3} \right ) 
   \langle K_X, L \rangle
\end{eqnarray}

Next, by \eqref{ThmJJkAlpha.3}, line \eqref{F101XL.1} is equal to 
$$
(q;q)_\infty^{\chi(X)} \cdot 
\sum_{\substack{\ell(\la) = 3\\|\la| = 0, m > 0}}
     \sum_{\substack{\w \la^{(1)} 
     \le \la^{(1)}, \w \la^{(2)} \le \la^{(2)}\\
     |\w \la^{(1)}| + |\w \la^{(2)}|= 0}}
     \,\, \prod_{\substack{1 \le i \le 2\\n \ge 1}} \left ( 
     \frac{(-1)^{p^{(i)}_{n}}}{p^{(i)}_{n}!} 
     \frac{q^{n p^{(i)}_{n}}}{(1-q^n)^{p^{(i)}_{n}}}  
     \frac{1}{\w p^{(i)}_{n}!} \frac{1}{(1-q^n)^{\w p^{(i)}_{n}}} \right )
$$
\begin{eqnarray}   \label{F101XL.4} 
\cdot \Tr \, q^\fn \prod_{i=1}^2  \frac{\fa_{\la^{(i)} - \w \la^{(i)}}
     \big ((1_X - K_X)^{\sum_{n \ge 1} p^{(i)}_{n}}\alpha_i \big )}
     {\big (\la^{(i)}-\w \la^{(i)} \big )^!}
\end{eqnarray}
where $\la^{(1)} = \la, \la^{(2)} = (-m, m), \alpha_1 = 1_X$ and 
$\alpha_2 = L$. By Lemma~\ref{TrqnfamuiNot0} and 
the convention \eqref{IntBeta}, the only nonzero contribution of 
the case $\w \la^{(2)} = \la^{(2)} = (-m, m)$ to \eqref{F101XL.4} is 
when $\w \la^{(1)} = \la^{(1)} = \la = (-i-j, i, j)$ for some positive 
integers $i$ and $j$. So the contribution of the case 
$\w \la^{(2)} = \la^{(2)} = (-m, m)$ to \eqref{F101XL.4} is equal to
\begin{eqnarray}   \label{F101XL.5}
& &\frac{1}{2} \sum_{i, j, m > 0} \frac{q^{i+j}}{(1-q^i)(1-q^j)(1-q^{i+j})}
   K_X^2 \, \frac{q^m}{(1-q^m)^2} \langle K_X, L \rangle   \nonumber \\
&=&\frac{1}{2} \left (Z(3) - q\frac{\rm d}{{\rm d}q}[1] \right ) \cdot 
   Z(2) \cdot K_X^2 \langle K_X, L \rangle
\end{eqnarray}
where we have used \cite[(5.13)]{AQ}. Similarly, 
the only nonzero contributions of the case $\w \la^{(2)} = (m) 
\le \la^{(2)} = (-m, m)$ to \eqref{F101XL.4} 
come from the following 2 situations:
\begin{enumerate}
\item[$\bullet$] $m = i+j$ for some positive integers $i$ and $j$, 
$\w \la^{(1)} = (-i, -j) \le \la^{(1)} = \la = (-i, -j, i + j)$;

\item[$\bullet$] $\w \la^{(1)} = (-i-m, i) \le \la^{(1)} = 
\la = (-i-m, i, m)$ 
for some positive integer $i$.
%
%
\end{enumerate}
So the contribution of the case $\w \la^{(2)} = (m) 
\le \la^{(2)} = (-m, m)$ to \eqref{F101XL.4} is equal to
\begin{eqnarray}   \label{F101XL.6}
& &(q;q)_\infty^{\chi(X)} \cdot 
     \left (\frac{1}{2} \sum_{i, j > 0} \frac{1}{1-q^i} \frac{1}{1-q^j} 
     \frac{-q^{i+j}}{1-q^{i+j}} \cdot 
     \Tr \, q^\fn \fa_{i+j}(1_X) \fa_{-i-j}((1_X - K_X)L) 
     \right.    \nonumber \\
& &+ \sum_{i \ne m, i, m > 0} \frac{-q^i}{1-q^i} \frac{1}{1-q^{i+m}} 
     \frac{-q^{m}}{1-q^{m}} \cdot 
     \Tr \, q^\fn \fa_{m}(1_X - K_X) \fa_{-m}((1_X - K_X)L) 
     \nonumber \\
& &+ \left. \sum_{i > 0} \frac{-q^i}{1-q^i} \frac{1}{1-q^{2i}} 
     \frac{-q^{i}}{1-q^{i}} \cdot 
     \Tr \, q^\fn \fa_{i}(1_X - K_X) \fa_{-i}((1_X - K_X)L) 
     \right )    \nonumber \\
&=&-\frac{1}{2} \sum_{i, j > 0} \frac{1}{1-q^i} \frac{1}{1-q^j} 
     \frac{q^{i+j}}{1-q^{i+j}} \cdot 
     \frac{i+j}{1-q^{i+j}} \langle K_X, L \rangle \nonumber \\
& &+ \sum_{i, m > 0} \frac{q^i}{1-q^i} \frac{1}{1-q^{i+m}} 
     \frac{q^{m}}{1-q^{m}} \cdot \frac{m}{1-q^m} \cdot
     2 \langle K_X, L \rangle
\end{eqnarray}
where we have used \eqref{Trij1Xij1X.02}. 
Likewise, the only nonzero contributions of the case $\w \la^{(2)} = (-m) 
\le \la^{(2)} = (-m, m)$ to \eqref{F101XL.4} 
come from the following 2 situations:
\begin{enumerate}
\item[$\bullet$] $m = i+j$ for some positive integers $i$ and $j$, 
$\w \la^{(1)} = (i, j) \le \la^{(1)} = \la = (-i -j, i, j)$;

\item[$\bullet$] $\w \la^{(1)} = (-i, i+m) \le \la^{(1)} = 
\la = (-i,-m, i+ m)$ 
for some positive integer $i$.
\end{enumerate}
Thus, the contribution of the case $\w \la^{(2)} = (-m) 
\le \la^{(2)} = (-m, m)$ to \eqref{F101XL.4} is 
\begin{eqnarray}   \label{F101XL.7}
& &\frac{1}{2} \sum_{i, j > 0} \frac{q^i}{1-q^i} \frac{q^j}{1-q^j} 
     \frac{1}{1-q^{i+j}} \cdot \frac{(i+j)q^{i+j}}{1-q^{i+j}} \cdot 
     2 \langle K_X, L \rangle \nonumber \\
& &- \sum_{i, m > 0} \frac{1}{1-q^i} \frac{q^{i+m}}{1-q^{i+m}} 
     \frac{1}{1-q^{m}} \cdot \frac{mq^m}{1-q^m} \cdot
     \langle K_X, L \rangle. 
\end{eqnarray}
Finally, the contribution of 
the case $\w \la^{(2)} = \emptyset$ to \eqref{F101XL.4} is $0$.
Combining with \eqref{F101XL.5}, \eqref{F101XL.6} and \eqref{F101XL.7}, 
we see that line \eqref{F101XL.1} is equal to
\begin{eqnarray}   \label{F101XL.8}
\frac{1}{2} \left (Z(3) - q\frac{\rm d}{{\rm d}q}[1] \right ) \cdot 
Z(2) \cdot K_X^2 \langle K_X, L \rangle   
+ A \cdot \langle K_X, L \rangle
\end{eqnarray}
where $A$ denotes 
\begin{eqnarray*}   
& &-\frac{1}{2} \sum_{i, j > 0} 
     \frac{(i+j)q^{i+j}}{(1-q^i)(1-q^j)(1-q^{i+j})^2}    
     + 2\sum_{i, j > 0} \frac{jq^{i+j}}{(1-q^i)(1-q^{j})^2(1-q^{i+j})} \\
& &+ \sum_{i, j > 0} 
     \frac{(i+j)q^{2(i+j)}}{(1-q^i)(1-q^j)(1-q^{i+j})^2}  
     - \sum_{i, j > 0} \frac{jq^{i+2j}}{(1-q^i)(1-q^j)^2(1-q^{i+j})}  \\
&=&-\frac{1}{2} \sum_{i, j > 0} 
     \frac{(i+j)q^{i+j}}{(1-q^i)(1-q^j)(1-q^{i+j})}    
     + \sum_{i, j > 0} \frac{jq^{i+j}}{(1-q^i)(1-q^{j})^2(1-q^{i+j})} \\
& &+ \frac{1}{2} \sum_{i, j > 0} 
     \frac{(i+j)q^{2(i+j)}}{(1-q^i)(1-q^j)(1-q^{i+j})^2}  
     + \sum_{i, j > 0} \frac{jq^{i+j}}{(1-q^i)(1-q^j)(1-q^{i+j})} \\
&=&\sum_{i, j > 0} \frac{jq^{i+j}}{(1-q^i)(1-q^{j})^2(1-q^{i+j})}
   + \frac{1}{2} \sum_{i, j > 0} 
     \frac{(i+j)q^{2(i+j)}}{(1-q^i)(1-q^j)(1-q^{i+j})^2}.
\end{eqnarray*}

Note that
\begin{eqnarray}   \label{qiqj}
  \frac{1}{(1-q^i)(1-q^j)}
= \left (\frac{1}{1-q^i} + \frac{q^j}{1-q^j} \right ) \frac{1}{1-q^{i+j}} 
\end{eqnarray}
Applying \eqref{qiqj} repeatedly, we conclude that $A$ is equal to
\begin{eqnarray*}   
& &\sum_{i, j > 0} \frac{jq^{i+2j}}{(1-q^{j})^2(1-q^{i+j})^2} 
   + \sum_{i, j > 0} \frac{jq^{i+j}}{(1-q^i)(1-q^{j})(1-q^{i+j})^2}  \\
& &+ \frac{1}{2} \sum_{i, j > 0} 
     \frac{(i+j)q^{2(i+j)}}{(1-q^i)(1-q^j)(1-q^{i+j})^2}   \\
&=&\sum_{i, j > 0} \frac{jq^{i+2j}}{(1-q^{j})^2(1-q^{i+j})^2} \\
& &+ \sum_{i, j > 0} \frac{jq^{i+j}}{(1-q^i)(1-q^{i+j})^3}  
   + \sum_{i, j > 0} \frac{jq^{i+2j}}{(1-q^j)(1-q^{i+j})^3}\\
& &+ \frac{1}{2} \sum_{i, j > 0} 
     \frac{(i+j)q^{2(i+j)}}{(1-q^i)(1-q^{i+j})^3}
     + \frac{1}{2} \sum_{i, j > 0} 
     \frac{(i+j)q^{2i+3j}}{(1-q^j)(1-q^{i+j})^3}.
\end{eqnarray*}
After some changes of variables, $A$ is equal to
\begin{eqnarray*}   
& &\sum_{n > m > 0} \frac{q^{n}}{(1-q^n)^2} \frac{mq^{m}}{(1-q^m)^2}\\
& &+ \sum_{n > m > 0} \frac{(n-m)q^n}{(1-q^n)^3} \frac{1}{1-q^m}  
   + \sum_{n > m > 0} \frac{q^{n}}{(1-q^n)^3} \frac{mq^{m}}{1-q^m}\\
& &+ \frac{1}{2} \sum_{n > m > 0} 
     \frac{nq^{2n}}{(1-q^n)^3} \frac{1}{1-q^m}
     + \frac{1}{2} \sum_{n > m > 0} 
     \frac{nq^{2n}}{(1-q^n)^3} \frac{q^{m}}{1-q^m}\\
&=&\sum_{n > m > 0} \frac{q^{n}}{(1-q^n)^2} \frac{mq^{m}}{(1-q^m)^2}
   + \sum_{n > m > 0} \frac{(n-m)q^n}{(1-q^n)^3} \frac{1}{1-q^m}\\
& &+ \sum_{n > m > 0} \frac{q^n}{(1-q^n)^3} \frac{m}{1-q^m}  
   - \frac{1}{2} \sum_{n > 0} \frac{n(n-1)q^{n}}{(1-q^n)^3}  \\
& &+ \sum_{n > m > 0} \frac{nq^{2n}}{(1-q^n)^3} \frac{1}{1-q^m}
   - \frac{1}{2} \sum_{n > 0} \frac{n(n-1)q^{2n}}{(1-q^n)^3}.
\end{eqnarray*}
Combining with \eqref{F101XL.1}, \eqref{F101XL.2}, \eqref{F101XL.3} 
and \eqref{F101XL.8}, we conclude that 
\begin{eqnarray}    \label{F101XL.9} 
  (q;q)_\infty^{\chi(X)} \cdot F_{1,0}^{1_X, L}(q)
= \frac{1}{2} (Z(3) - Z(2)) \cdot 
   Z(2) \cdot K_X^2 \cdot \langle K_X, L \rangle   
   + \W A \cdot \langle K_X, L \rangle 
\end{eqnarray}
where $\W A$ is equal to
\begin{eqnarray*}   
& &\sum_{n > m > 0} \frac{q^{n}}{(1-q^n)^2} \frac{mq^{m}}{(1-q^m)^2}
   + \sum_{n > m > 0} \frac{nq^{2n}+nq^n}{(1-q^n)^3} \frac{1}{1-q^m}\\
&=&q\frac{\rm d}{{\rm d}q} \sum_{n > m > 0} \frac{q^{n}}{(1-q^n)^2} \frac{1}{1-q^m}
\end{eqnarray*}
By \cite[Corollary~4]{Bra1}, we have
\begin{eqnarray}
\sum_{n_1 > n_2 > 0} \frac{q^{n_1}}{(1-q^{n_1})^2} \frac{1}{1-q^{n_2}}
= \sum_{n > 0} \frac{q^{2n}}{(1-q^n)^3}.   \label{Bra1Cor4.1}  
\end{eqnarray}
So $\W A$ is equal to
\begin{eqnarray*}   
& &q\frac{\rm d}{{\rm d}q} \sum_{n > 0} \frac{q^{2n}}{(1-q^n)^3}  \\
&=&q\frac{\rm d}{{\rm d}q} \frac12 \left (
   \sum_{n > 0} \frac{q^{2n} + q^{n}}{(1-q^n)^3}
   - \sum_{n > 0} \frac{q^{n} - q^{2n}}{(1-q^n)^3} \right ) \\
&=&\frac12 \cdot q\frac{\rm d}{{\rm d}q} (Z(3) - Z(2)).
\end{eqnarray*}
By \eqref{F101XL.9}, $(q;q)_\infty^{\chi(X)} \cdot F_{1,0}^{1_X, L}(q)$ 
is equal to
$$   
\frac{1}{2} (Z(3) - Z(2)) \cdot Z(2) 
  \cdot K_X^2 \cdot \langle K_X, L \rangle   
+ \frac12 \cdot q\frac{\rm d}{{\rm d}q} (Z(3) - Z(2)) 
  \cdot \langle K_X, L \rangle.
$$
This completes the proof of our lemma. 
\end{proof}

By \cite[Example~2.11]{BK3}, 
\begin{eqnarray}    \label{DZ3} 
q\frac{\rm d}{{\rm d}q} Z(3) = 5Z(5) - 4Z(3,2) - 6Z(2,3) + Z(3).
\end{eqnarray}

In the following, we calculate 
$(q;q)_\infty^{\chi(X)} \cdot F_{1,1}^{1_X, 1_X}(q)$.

\begin{lemma}  \label{F111X1X}
Let $X$ be a smooth projective surface. 
Let $h_{1,1}^{(2)}(q)$ and $h_{1,1}^{(4)}(q)$ be the quasi-modular forms 
from Definition~\ref{h11}. 
Then, $(q;q)_\infty^{\chi(X)} \cdot F_{1,1}^{1_X, 1_X}(q)$ is equal to
$$
\frac{1}{4} \big (Z(3) - Z(2) \big )^2 \cdot (K_X^2)^2 
+ h_{1,1}^{(2)}(q) \cdot \chi(X)
$$
$$
+ \left (\sum_{n > m > 0} \frac{q^n(1+q^{n})}{(1-q^n)^3} 
  \frac{n-nm+m^2}{1-q^m} 
  + 2\sum_{n > m > \ell > 0} \frac{nq^n(1+q^n)}{(1-q^n)^3}
     \frac{1}{1-q^m} \frac{1}{1-q^\ell}  \right.
$$
$$
\left. + 2 \sum_{n > m > \ell > 0} \frac{q^n}{(1-q^n)^2}
  \frac{mq^m}{(1-q^m)^2}\frac{1}{1-q^\ell} - h_{1,1}^{(4)}(q) \right )
  \cdot K_X^2.
$$
\end{lemma}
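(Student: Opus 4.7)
The plan is to apply Lemma~\ref{FtoW} with $N=2$, $k_1 = k_2 = 1$, and $\alpha_1 = \alpha_2 = 1_X$ to write
\[
(q;q)_\infty^{\chi(X)} \cdot F_{1,1}^{1_X,1_X}(q) = (q;q)_\infty^{\chi(X)} \cdot \Tr\,q^\fn\,\Wb(\fL_1, z)\,\fG_1(1_X)^2,
\]
and then to expand $\fG_1(1_X)$ via \eqref{char_thK=1.0} as $A + B$, where
\[
A = -\sum_{\ell(\la)=3,\,|\la|=0} \frac{\fa_\la(1_X)}{\la^!}, \qquad
B = \sum_{n>0}\frac{1-n}{2}\fa_{-n}\fa_n(K_X).
\]
Expanding $\fG_1(1_X)^2 = A^2 + AB + BA + B^2$, I would evaluate the four resulting traces in turn using Lemma~\ref{ProofOfQin5.8} and Lemma~\ref{TrqnfamuiNot0}.

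First I would dispose of the $B^2$ contribution, which is the most direct piece. Applying Lemma~\ref{ProofOfQin5.8} with $\la^{(i)} = (-n_i, n_i)$ and $\alpha_i = K_X$, the leading contribution (both Heisenberg factors fully contracted with $\Wb(\fL_1, z)$) produces
\[
\frac{1}{4}\left(\sum_{n>0}\frac{(n-1)q^n}{(1-q^n)^2}\right)^2 (K_X^2)^2 = \tfrac{1}{4}\bigl(Z(3) - Z(2)\bigr)^2 (K_X^2)^2,
\]
using $\int_X(1_X - K_X)\cdot K_X = -K_X^2$ and rewriting the sum in terms of $Z(2), Z(3)$. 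Subleading contributions (those with at least one self-contraction) yield further $K_X^2$-series that combine with the $AB + BA$ cross-terms below.

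For the cross terms $AB + BA$, Lemma~\ref{TrqnfamuiNot0}(ii) restricts the admissible $\w\la$'s by parity-matching, and the remaining enumeration is entirely analogous to the $\W A$ calculation in the proof of Lemma~\ref{F101XL}. After collecting, the $AB + BA$ contribution provides the bulk of the $K_X^2$-coefficient in the statement, including the triple sums over $n > m > \ell > 0$; the key algebraic manipulations are the partial-fraction identity \eqref{qiqj} applied repeatedly, change-of-variable reductions (e.g.\ $n = i+j$), and the Bachmann-style identity \eqref{Bra1Cor4.1}.

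The main obstacle is the $A^2$ contribution, a double sum over pairs of generalized partitions $\la^{(1)}, \la^{(2)}$ of length three and total weight zero. Applying Lemma~\ref{ProofOfQin5.8} with $N = 2$ produces leftover traces $\Tr\,q^\fn\,\prod_i \fa_{\la^{(i)} - \w\la^{(i)}}$, which must be reduced recursively via \eqref{Trqdjpx.3} and the base formulas of Lemma~\ref{Trij1Xij1X}. The cases $\w\la^{(i)} = \la^{(i)}$ give integrals $\int_X(1_X - K_X)^3 = 3 K_X^2$ (since $\int_X 1_X = \int_X K_X = 0$) and contribute further to the $K_X^2$ and $(K_X^2)^2$ coefficients, while the cases $\w\la^{(i)} \ne \la^{(i)}$ produce contributions governed by self-contractions of two cubic Heisenberg monomials. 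These self-contraction contributions exactly parallel --- up to the $K_X$-free specialisation --- the enumeration carried out in the proof of Lemma~\ref{EquivCh1Ch1Prop}, so they produce an $h_{1,1}^{(2)}(q)\cdot\chi(X)$ summand via the $\langle e_X, 1_X\rangle = \chi(X)$ factors arising when two of the six Heisenberg factors self-contract, together with a residual $-h_{1,1}^{(4)}(q) \cdot K_X^2$ term from the top-level self-contractions (the one that would have produced the $m^4$ coefficient in the purely equivariant case). Assembling all four pieces and simplifying using Definition~\ref{h11}, Proposition~\ref{h11024}, \eqref{qiqj}, and \eqref{Bra1Cor4.1} delivers the stated closed form.
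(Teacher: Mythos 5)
Your overall strategy coincides with the paper's: expand $\fG_1(1_X)=A+B$ via \eqref{char_thK=1.0}, split $\fG_1(1_X)^2$ into the four pieces $A^2$, $AB$, $BA$, $B^2$, evaluate each trace with Lemma~\ref{ProofOfQin5.8}, Lemma~\ref{TrqnfamuiNot0} and the trace formulas of Lemmas~\ref{Tracei1Xj1X}--\ref{Trij1Xij1X}, and then simplify with \eqref{qiqj} and \eqref{Bra1Cor4.1}; your identification of the source of the $h_{1,1}^{(2)}(q)\cdot\chi(X)$ and $-h_{1,1}^{(4)}(q)\cdot K_X^2$ terms with the equivariant computation of Lemma~\ref{EquivCh1Ch1Prop} is also the right picture. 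However, two concrete steps in your sketch are wrong and would derail the bookkeeping. First, you assert
$$\sum_{n>0}\frac{(n-1)q^n}{(1-q^n)^2}=Z(3)-Z(2),$$
i.e.\ that $q\frac{\rm d}{{\rm d}q}[1]=\sum_{n>0}\frac{nq^n}{(1-q^n)^2}$ equals $Z(3)=\sum_{n>0}\frac{q^n(1+q^n)}{(1-q^n)^3}$. This is false: the first series is $\sum_N N\tau(N)q^N$ while the second is $\sum_N\sigma_2(N)q^N$ (compare the $q^2$-coefficients, $4$ versus $5$). Consequently the full $(K_X^2)^2$-coefficient $\tfrac14(Z(3)-Z(2))^2$ does \emph{not} come from the leading $B^2$ contraction alone; in the paper each of the four pieces contributes a nonzero $(K_X^2)^2$-term, namely $\tfrac14\big(q\frac{\rm d}{{\rm d}q}[1]-Z(2)\big)^2$ from $B^2$, $\tfrac14\big(Z(3)-q\frac{\rm d}{{\rm d}q}[1]\big)\big(q\frac{\rm d}{{\rm d}q}[1]-Z(2)\big)$ from each cross term, and $\tfrac14\big(Z(3)-q\frac{\rm d}{{\rm d}q}[1]\big)^2$ from $A^2$, and only their sum telescopes to $\tfrac14(Z(3)-Z(2))^2$. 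As written, your plan both misidentifies the $B^2$ piece and then adds further $(K_X^2)^2$ contributions from $A^2$, so it cannot reproduce the stated coefficient.

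Second, in the fully contracted case of $A^2$ the relevant integral is not $\int_X(1_X-K_X)^3=3K_X^2$. By Lemma~\ref{ProofOfQin5.8} the exponent on $1_X-K_X$ is the number of \emph{positive} parts of $\w\la^{(i)}=\la^{(i)}$, which for a length-three partition of total weight zero is either $1$ (type $(-i,-j,i+j)$, giving $\int_X(1_X-K_X)=0$) or $2$ (type $(-i-j,i,j)$, giving $\int_X(1_X-K_X)^2=K_X^2$); it is never $3$. The spurious factor of $3$ would again corrupt the $(K_X^2)^2$- and $K_X^2$-coefficients. These are not presentational issues but computational errors in the two steps you spell out explicitly; with them corrected (and with the remaining enumeration of the partially contracted cases actually carried out, which is the bulk of the paper's proof), your outline does follow the paper's route.
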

\begin{proof}
By \eqref{FtoW.0} and \eqref{char_thK=1.0},
$(q;q)_\infty^{\chi(X)} \cdot F_{1,1}^{1_X, 1_X}(q)$ is equal to
\begin{eqnarray}    \label{F111X1X.1}
(q;q)_\infty^{\chi(X)} \cdot \sum_{\substack{\ell(\la) = 3, |\la| = 0\\
  \ell(\mu) = 3, |\mu| = 0}}  \Tr \, q^\fn \, {\bf W}(\fL_1, z) \, 
  \frac{\fa_\la(1_X)}{\la^!} \, \frac{\fa_\mu(1_X)}{\mu^!}
\end{eqnarray}
\begin{eqnarray}    \label{F111X1X.2}
+ (q;q)_\infty^{\chi(X)} \cdot \frac12 
  \sum_{\ell(\la) = 3, |\la| = 0, m > 0} (m-1) \Tr \, q^\fn \, 
  {\bf W}(\fL_1, z) \, \frac{\fa_\la(1_X)}{\la^!} \fa_{-m}\fa_m(K_X)
\end{eqnarray} 
\begin{eqnarray}    \label{F111X1X.add}
+ (q;q)_\infty^{\chi(X)} \cdot \frac12 
  \sum_{\ell(\mu) = 3, |\mu| = 0, m > 0} (m-1) \Tr \, q^\fn \, 
  {\bf W}(\fL_1, z) \, \fa_{-m}\fa_m(K_X) \frac{\fa_\mu(1_X)}{\mu^!}
\end{eqnarray}
\begin{eqnarray}    \label{F111X1X.3}
+ (q;q)_\infty^{\chi(X)} \cdot \frac14 \sum_{n, m > 0} (n-1)(m-1)
  \Tr \, q^\fn \, {\bf W}(\fL_1, z) \, \fa_{-n} \fa_{n}(K_X) 
  \fa_{-m}\fa_m(K_X).
\end{eqnarray} 
 
First of all, by \eqref{ThmJJkAlpha.3} and Lemma~\ref{TrqnfamuiNot0}, 
line \eqref{F111X1X.3} is equal to
\begin{eqnarray*}     
& &(q;q)_\infty^{\chi(X)} \cdot \frac{1}{4} \left (
   (q;q)_\infty^{-\chi(X)} \cdot \sum_{n, m > 0}  
   \frac{-(n-1)q^n}{(1-q^n)^2} \frac{-(m-1)q^m}{(1-q^m)^2} 
   (-K_X^2) (-K_X^2) \right.   \\
& &+ \sum_{n > 0} \frac{-(n-1)^2q^n}{1-q^n} \frac{1}{1-q^n}\cdot 
   \Tr \, q^\fn \fa_{-n}((1_X - K_X)K_X)\fa_{n}(K_X)     \\
& &\left. + \sum_{n > 0} \frac{(n-1)^2}{1-q^n} \frac{-q^n}{1-q^n}\cdot 
   \Tr \, q^\fn \fa_{n}(K_X)\fa_{-n}((1_X - K_X)K_X)
   \right ). 
\end{eqnarray*}
Combining with \eqref{Trij1Xij1X.01} and \eqref{Trij1Xij1X.02}, we see that
line \eqref{F111X1X.3} is equal to 
\begin{eqnarray}     \label{F111X1X.4}
\frac{1}{4} \left (q\frac{\rm d}{{\rm d}q}[1] - Z(2) \right )^2 
    \cdot (K_X^2)^2   
+ \frac{1}{4} \sum_{n > 0} \frac{n(n-1)^2(q^{2n}+q^n)}{(1-q^n)^3} 
    \cdot K_X^2.
\end{eqnarray}

Next, by \eqref{ThmJJkAlpha.3}, line \eqref{F111X1X.add} is equal to 
$$
(q;q)_\infty^{\chi(X)} \cdot \frac12
\sum_{\substack{\ell(\mu) = 3, |\mu| = 0\\m > 0}} (m-1)
     \sum_{\substack{\w \la^{(1)} 
     \le \la^{(1)}, \w \la^{(2)} \le \la^{(2)}\\
     |\w \la^{(1)}| + |\w \la^{(2)}|= 0}}
$$
$$
\prod_{\substack{1 \le i \le 2\\n \ge 1}} \left ( 
     \frac{(-1)^{p^{(i)}_{n}}}{p^{(i)}_{n}!} 
     \frac{q^{n p^{(i)}_{n}}}{(1-q^n)^{p^{(i)}_{n}}}  
     \frac{1}{\w p^{(i)}_{n}!} \frac{1}{(1-q^n)^{\w p^{(i)}_{n}}} \right )
$$
\begin{eqnarray}   \label{F111X1X.add.1} 
\cdot \Tr \, q^\fn \prod_{i=1}^2  \frac{\fa_{\la^{(i)} - \w \la^{(i)}}
     \big ((1_X - K_X)^{\sum_{n \ge 1} p^{(i)}_{n}}\alpha_i \big )}
     {\big (\la^{(i)}-\w \la^{(i)} \big )^!}
\end{eqnarray}
where $\la^{(1)} = (-m, m), \la^{(2)} = \mu, \alpha_1 = K_X$ and 
$\alpha_2 = 1_X$. The contribution of the case 
$\w \la^{(1)} = \la^{(1)} = (-m, m)$ to \eqref{F111X1X.add} is equal to
\begin{eqnarray}   \label{F111X1X.add.2}
& &\frac{1}{2} \cdot 
   \frac{1}{2} \sum_{i, j, m > 0} \frac{(m-1)q^m}{(1-q^m)^2} K_X^2 \, 
   \frac{q^{i+j}}{(1-q^i)(1-q^j)(1-q^{i+j})} K_X^2    \nonumber \\
&=&\frac{1}{4} \left (Z(3) - q\frac{\rm d}{{\rm d}q}[1] \right ) \cdot 
   \left (q\frac{\rm d}{{\rm d}q}[1] - Z(2) \right ) \cdot (K_X^2)^2.
\end{eqnarray}
The contribution of the case $\w \la^{(1)} = (m) 
\le \la^{(1)} = (-m, m)$ to \eqref{F111X1X.add} is equal to
\begin{eqnarray}   \label{F111X1X.add.3}
& &-\frac{1}{4} \sum_{i, j > 0} \frac{1}{1-q^i} \frac{1}{1-q^j} 
     \frac{q^{i+j}}{1-q^{i+j}} \cdot 
     \frac{(i+j)(i+j-1)q^{i+j}}{1-q^{i+j}} \cdot K_X^2 \nonumber \\
& &+ \sum_{i, m > 0} \frac{q^i}{1-q^i} \frac{1}{1-q^{i+m}} 
     \frac{q^{m}}{1-q^{m}} \cdot \frac{m(m-1)q^{m}}{1-q^m} \cdot K_X^2.
\end{eqnarray}
The contribution of the case $\w \la^{(1)} = (-m) 
\le \la^{(1)} = (-m, m)$ to \eqref{F111X1X.add} is 
\begin{eqnarray}   \label{F111X1X.add.4}
& &\frac{1}{2} \sum_{i, j > 0} \frac{q^i}{1-q^i} \frac{q^j}{1-q^j} 
     \frac{1}{1-q^{i+j}} \cdot \frac{(i+j)(i+j-1)}{1-q^{i+j}} \cdot 
     K_X^2 \nonumber \\
& &- \frac{1}{2}\sum_{i, m > 0} \frac{1}{1-q^i} \frac{q^{i+m}}{1-q^{i+m}} 
     \frac{1}{1-q^{m}} \cdot \frac{m(m-1)}{1-q^m} \cdot K_X^2. 
\end{eqnarray}
Finally, the contribution of 
the case $\w \la^{(2)} = \emptyset$ to \eqref{F111X1X.add} is $0$.
Combining with \eqref{F111X1X.add.2}, \eqref{F111X1X.add.3} and 
\eqref{F111X1X.add.4}, we see that line \eqref{F111X1X.add} is equal to
\begin{eqnarray}   \label{F111X1X.add.5}
\frac{1}{4} \left (Z(3) - q\frac{\rm d}{{\rm d}q}[1] \right ) \cdot 
\left (q\frac{\rm d}{{\rm d}q}[1] - Z(2) \right ) \cdot (K_X^2)^2 
+ A \cdot K_X^2
\end{eqnarray}
where $A$ denotes 
\begin{eqnarray}   \label{F111X1X.add.6}  
& &-\frac{1}{4} \sum_{i, j > 0} 
     \frac{(i+j)(i+j-1)q^{2(i+j)}}{(1-q^i)(1-q^j)(1-q^{i+j})^2}    
     + \sum_{i, j > 0} 
     \frac{j(j-1)q^{i+2j}}{(1-q^i)(1-q^{j})^2(1-q^{i+j})} \nonumber \\
& &+ \frac{1}{2} \sum_{i, j > 0} 
     \frac{(i+j)(i+j-1)q^{i+j}}{(1-q^i)(1-q^j)(1-q^{i+j})^2}  
     - \frac{1}{2} \sum_{i, j > 0} 
     \frac{j(j-1)q^{i+j}}{(1-q^i)(1-q^j)^2(1-q^{i+j})}  \nonumber \\
&=&\frac{1}{4} \sum_{i, j > 0} 
     \frac{(i+j)(i+j-1)q^{i+j}}{(1-q^i)(1-q^j)(1-q^{i+j})}    
     + \frac{1}{2} \sum_{i, j > 0} 
     \frac{j(j-1)q^{i+2j}}{(1-q^i)(1-q^{j})^2(1-q^{i+j})} \nonumber \\
& &+ \frac{1}{4} \sum_{i, j > 0} 
     \frac{(i+j)(i+j-1)q^{i+j}}{(1-q^i)(1-q^j)(1-q^{i+j})^2}  
     - \frac{1}{2} \sum_{i, j > 0} 
     \frac{j(j-1)q^{i+j}}{(1-q^i)(1-q^j)(1-q^{i+j})}. \qquad 
\end{eqnarray}

Similarly, by \eqref{ThmJJkAlpha.3}, line \eqref{F111X1X.2} is equal to 
$$
(q;q)_\infty^{\chi(X)} \cdot \frac12
\sum_{\substack{\ell(\la) = 3, |\la| = 0\\m > 0}} (m-1)
     \sum_{\substack{\w \la^{(1)} 
     \le \la^{(1)}, \w \la^{(2)} \le \la^{(2)}\\
     |\w \la^{(1)}| + |\w \la^{(2)}|= 0}}
$$
$$
\prod_{\substack{1 \le i \le 2\\n \ge 1}} \left ( 
     \frac{(-1)^{p^{(i)}_{n}}}{p^{(i)}_{n}!} 
     \frac{q^{n p^{(i)}_{n}}}{(1-q^n)^{p^{(i)}_{n}}}  
     \frac{1}{\w p^{(i)}_{n}!} \frac{1}{(1-q^n)^{\w p^{(i)}_{n}}} \right )
$$
\begin{eqnarray}   \label{F111X1X.5} 
\cdot \Tr \, q^\fn \prod_{i=1}^2  \frac{\fa_{\la^{(i)} - \w \la^{(i)}}
     \big ((1_X - K_X)^{\sum_{n \ge 1} p^{(i)}_{n}}\alpha_i \big )}
     {\big (\la^{(i)}-\w \la^{(i)} \big )^!}
\end{eqnarray}
where $\la^{(1)} = \la, \la^{(2)} = (-m, m), \alpha_1 = 1_X$ and 
$\alpha_2 = K_X$. The contribution of the case 
$\w \la^{(2)} = \la^{(2)} = (-m, m)$ to \eqref{F111X1X.5} is equal to
\begin{eqnarray}   \label{F111X1X.6}
& &\frac{1}{2} \cdot 
   \frac{1}{2} \sum_{i, j, m > 0} \frac{q^{i+j}}{(1-q^i)(1-q^j)(1-q^{i+j})}
   K_X^2 \, \frac{(m-1)q^m}{(1-q^m)^2} K_X^2   \nonumber \\
&=&\frac{1}{4} \left (Z(3) - q\frac{\rm d}{{\rm d}q}[1] \right ) \cdot 
   \left (q\frac{\rm d}{{\rm d}q}[1] - Z(2) \right ) \cdot (K_X^2)^2.
\end{eqnarray}
The contribution of the case $\w \la^{(2)} = (m) 
\le \la^{(2)} = (-m, m)$ to \eqref{F111X1X.5} is equal to
\begin{eqnarray}   \label{F111X1X.7}
& &-\frac{1}{4} \sum_{i, j > 0} \frac{1}{1-q^i} \frac{1}{1-q^j} 
     \frac{q^{i+j}}{1-q^{i+j}} \cdot 
     \frac{(i+j)(i+j-1)}{1-q^{i+j}} \cdot K_X^2 \nonumber \\
& &+ \sum_{i, m > 0} \frac{q^i}{1-q^i} \frac{1}{1-q^{i+m}} 
     \frac{q^{m}}{1-q^{m}} \cdot \frac{m(m-1)}{1-q^m} \cdot K_X^2.
\end{eqnarray}
The contribution of the case $\w \la^{(2)} = (-m) 
\le \la^{(2)} = (-m, m)$ to \eqref{F111X1X.5} is 
\begin{eqnarray}   \label{F111X1X.8}
& &\frac{1}{2} \sum_{i, j > 0} \frac{q^i}{1-q^i} \frac{q^j}{1-q^j} 
     \frac{1}{1-q^{i+j}} \cdot \frac{(i+j)(i+j-1)q^{i+j}}{1-q^{i+j}} \cdot 
     K_X^2 \nonumber \\
& &- \frac{1}{2}\sum_{i, m > 0} \frac{1}{1-q^i} \frac{q^{i+m}}{1-q^{i+m}} 
     \frac{1}{1-q^{m}} \cdot \frac{m(m-1)q^m}{1-q^m} \cdot K_X^2. 
\end{eqnarray}
Finally, the contribution of 
the case $\w \la^{(2)} = \emptyset$ to \eqref{F111X1X.5} is $0$.
Combining with \eqref{F111X1X.6}, \eqref{F111X1X.7} and \eqref{F111X1X.8}, 
we see that line \eqref{F111X1X.2} is equal to
\begin{eqnarray}   \label{F111X1X.9}
\frac{1}{4} \left (Z(3) - q\frac{\rm d}{{\rm d}q}[1] \right ) \cdot 
\left (q\frac{\rm d}{{\rm d}q}[1] - Z(2) \right ) \cdot (K_X^2)^2 
+ B \cdot K_X^2
\end{eqnarray}
where $B$ denotes 
\begin{eqnarray}   \label{F111X1X.10}  
& &-\frac{1}{4} \sum_{i, j > 0} 
     \frac{(i+j)(i+j-1)q^{i+j}}{(1-q^i)(1-q^j)(1-q^{i+j})^2}    
     + \sum_{i, j > 0} 
     \frac{j(j-1)q^{i+j}}{(1-q^i)(1-q^{j})^2(1-q^{i+j})} \nonumber \\
& &+ \frac{1}{2} \sum_{i, j > 0} 
     \frac{(i+j)(i+j-1)q^{2(i+j)}}{(1-q^i)(1-q^j)(1-q^{i+j})^2}  
     - \frac{1}{2} \sum_{i, j > 0} 
     \frac{j(j-1)q^{i+2j}}{(1-q^i)(1-q^j)^2(1-q^{i+j})}  \nonumber \\
&=&-\frac{1}{4} \sum_{i, j > 0} 
     \frac{(i+j)(i+j-1)q^{i+j}}{(1-q^i)(1-q^j)(1-q^{i+j})}    
     + \frac{1}{2} \sum_{i, j > 0} 
     \frac{j(j-1)q^{i+j}}{(1-q^i)(1-q^{j})^2(1-q^{i+j})} \nonumber \\
& &+ \frac{1}{4} \sum_{i, j > 0} 
     \frac{(i+j)(i+j-1)q^{2(i+j)}}{(1-q^i)(1-q^j)(1-q^{i+j})^2}  
     + \frac{1}{2} \sum_{i, j > 0} 
     \frac{j(j-1)q^{i+j}}{(1-q^i)(1-q^j)(1-q^{i+j})}. \qquad
\end{eqnarray}

Line \eqref{F111X1X.1} is equal to 
$$
(q;q)_\infty^{\chi(X)} \cdot 
\sum_{\substack{\ell(\la) = 3, |\la| = 0\\\ell(\mu) = 3, |\mu| = 0}}
     \sum_{\substack{\w \la^{(1)} 
     \le \la^{(1)}, \w \la^{(2)} \le \la^{(2)}\\
     |\w \la^{(1)}| + |\w \la^{(2)}|= 0}}
     \,\, \prod_{\substack{1 \le i \le 2\\n \ge 1}} \left ( 
     \frac{(-1)^{p^{(i)}_{n}}}{p^{(i)}_{n}!} 
     \frac{q^{n p^{(i)}_{n}}}{(1-q^n)^{p^{(i)}_{n}}}  
     \frac{1}{\w p^{(i)}_{n}!} \frac{1}{(1-q^n)^{\w p^{(i)}_{n}}} \right )
$$
\begin{eqnarray}   \label{F111X1X.11} 
\cdot \Tr \, q^\fn \prod_{i=1}^2  \frac{\fa_{\la^{(i)} - \w \la^{(i)}}
     \big ((1_X - K_X)^{\sum_{n \ge 1} p^{(i)}_{n}} \big )}
     {\big (\la^{(i)}-\w \la^{(i)} \big )^!}
\end{eqnarray}
where $\la^{(1)} = \la$ and $\la^{(2)} = \mu$.
By Lemma~\ref{TrqnfamuiNot0} and 
the convention \eqref{IntBeta}, the only nonzero contribution of 
the case $\w \la^{(2)} = \la^{(2)} = \mu$ to \eqref{F111X1X.11} is 
when $\w \la^{(1)} = \la^{(1)} = \la = (-i-j, i, j)$ for some positive 
integers $i$ and $j$, and $\mu = (-k-\ell, k, \ell)$ for some positive 
integers $k$ and $\ell$. So the contribution of the case 
$\w \la^{(2)} = \la^{(2)}$ to \eqref{F111X1X.11} is 
\begin{eqnarray}   \label{F111X1X.12}
& &\frac{1}{4} \sum_{i, j, k, \ell > 0} 
   \frac{q^{i+j}}{(1-q^i)(1-q^j)(1-q^{i+j})} K_X^2 \, 
   \frac{q^{k+\ell}}{(1-q^k)(1-q^\ell)(1-q^{k+\ell})} K_X^2   \nonumber \\
&=&\frac{1}{4} \left (Z(3) - q\frac{\rm d}{{\rm d}q}[1] \right )^2  
   \cdot (K_X^2)^2.
\end{eqnarray}
The only nonzero contributions of the case $\w \la^{(2)} = (k, \ell) 
\le \la^{(2)}$ (with $k, \ell > 0$) to \eqref{F111X1X.11} 
come from the following 2 situations:
\begin{enumerate}
\item[$\bullet$] $k + \ell = i+j$ for some positive integers $i$ and $j$, 
$\w \la^{(1)} = (-i, -j) \le \la^{(1)} = \la = (-i, -j, i + j)$,
and $\la^{(2)} = \mu = (-k - \ell, k, \ell)$;

\item[$\bullet$] $\w \la^{(1)} = (-i-k-\ell, i) \le \la^{(1)} = 
\la = (-i-k-\ell, i, k + \ell)$ for some positive integer $i$, 
and $\la^{(2)} = \mu = (-k - \ell, k, \ell)$.
\end{enumerate}
Thus, the contribution of the case $\w \la^{(2)} = (k, \ell) 
\le \la^{(2)}$ (with $k, \ell > 0$) to \eqref{F111X1X.11} is 
\begin{eqnarray*}   
& &(q;q)_\infty^{\chi(X)} \cdot 
     \left (\frac{1}{4} \sum_{i+ j = k + \ell} 
     \frac{1}{1-q^i} \frac{1}{1-q^j} 
     \frac{-q^{k}}{1-q^k} \frac{-q^{\ell}}{1-q^\ell} \right.   \\
& &\qquad \cdot \Tr \, q^\fn \fa_{k + \ell}(1_X) 
     \fa_{-k - \ell}((1_X - K_X)^2) 
\end{eqnarray*}
\begin{eqnarray*}   
& &+ \frac{1}{2} \sum_{i, k, \ell > 0} \frac{-q^i}{1-q^i} 
     \frac{1}{1-q^{i+k + \ell}} 
     \frac{-q^{k}}{1-q^k} \frac{-q^{\ell}}{1-q^\ell} \\
& &\qquad \left. \cdot \Tr \, q^\fn \fa_{k + \ell}(1_X - K_X) 
     \fa_{-k - \ell}((1_X - K_X)^2) \right ).      
\end{eqnarray*}
By \eqref{Trij1Xij1X.02}, the contribution of 
$\w \la^{(2)} = (k, \ell) \le \la^{(2)}$ (with $k, \ell > 0$) 
to \eqref{F111X1X.11} is
$$
-\frac{1}{4} \sum_{i+ j = k + \ell} 
     \frac{1}{1-q^i} \frac{1}{1-q^j} 
     \frac{q^{k}}{1-q^k} \frac{q^{\ell}}{1-q^\ell} 
     \frac{k + \ell}{1-q^{k + \ell}} \cdot K_X^2
$$
\begin{eqnarray}   \label{F111X1X.13}
+ \frac{3}{2} \sum_{i, k, \ell > 0} \frac{q^i}{1-q^i} 
     \frac{1}{1-q^{i+k + \ell}} 
     \frac{q^{k}}{1-q^k} \frac{q^{\ell}}{1-q^\ell} 
     \frac{k + \ell}{1-q^{k + \ell}} \cdot K_X^2.   
\end{eqnarray}
The only nonzero contributions of the case $\w \la^{(2)} = (-k, \ell) 
\le \la^{(2)}$ (with $k, \ell > 0$) to \eqref{F111X1X.11} 
come from the following 3 situations:
\begin{enumerate}
\item[$\bullet$] $k = \ell + h$ for some positive integers $h$, 
$\la^{(2)} = \mu = (-k, \ell, h)$, and
$\w \la^{(1)} = (-i, i+h) \le \la^{(1)} = \la = (-h, -i, i + h)$
for some positive integer $i$;

\item[$\bullet$] $k = \ell + i+j$ for some positive integers $i$ and $j$, 
$\la^{(2)} = \mu = (-k, \ell, i+j)$, and
$\w \la^{(1)} = (i, j) \le \la^{(1)} = \la = (-i-j, i, j)$
for some positive integer $i$;

\item[$\bullet$] $\ell = k + h$ for some positive integers $h$, 
$\la^{(2)} = \mu = (-h, -k, \ell)$, and
$\w \la^{(1)} = (-i-h, i) \le \la^{(1)} = \la = (-i-h, i, h)$
for some positive integer $i$.
%
\end{enumerate}
So the contribution of the case $\w \la^{(2)} = (-k, \ell) 
\le \la^{(2)}$ (with $k, \ell > 0$) to \eqref{F111X1X.11} is 
$$
(q;q)_\infty^{\chi(X)} \cdot \left (\sum_{i, h, \ell > 0} 
     \frac{1}{1-q^i} \frac{-q^{i+h}}{1-q^{i+h}} 
     \frac{1}{1-q^{\ell + h}} \frac{-q^{\ell}}{1-q^\ell}  
\Tr \, q^\fn \fa_{-h}(1_X - K_X)\fa_{h}(1_X - K_X) \right.  
$$
$$
+ \frac12 \sum_{i, j, \ell > 0} 
     \frac{-q^i}{1-q^i} \frac{-q^j}{1-q^j} 
     \frac{1}{1-q^{i+j+\ell}} \frac{-q^{\ell}}{1-q^\ell}  
\Tr \, q^\fn \fa_{-i-j}((1_X - K_X)^2)\fa_{i+j}(1_X - K_X)
$$
$$
\left. + \sum_{i, k, h > 0} 
     \frac{1}{1-q^{i+h}} \frac{-q^i}{1-q^i} 
     \frac{1}{1-q^k} \frac{-q^{k+h}}{1-q^{k+h}}  
\Tr \, q^\fn \fa_{h}(1_X - K_X)\fa_{-h}(1_X - K_X) \right ).
$$
By \eqref{Trij1Xij1X.01} and \eqref{Trij1Xij1X.02},
the contribution of the case $\w \la^{(2)} = (-k, \ell) 
\le \la^{(2)}$ (with $k, \ell > 0$) to \eqref{F111X1X.11} is 
$$
-\sum_{i, h, \ell > 0} \frac{1}{1-q^i} \frac{q^{i+h}}{1-q^{i+h}} 
\frac{1}{1-q^{\ell + h}} \frac{q^{\ell}}{1-q^\ell}\frac{hq^h}{1-q^h} 
\cdot K_X^2 
$$
$$
+ \frac32 \sum_{i, j, \ell > 0} \frac{q^i}{1-q^i} \frac{q^j}{1-q^j} 
     \frac{1}{1-q^{i+j+\ell}} \frac{q^{\ell}}{1-q^\ell}  
\frac{(i+j)q^{i+j}}{1-q^{i+j}} \cdot K_X^2
$$
\begin{eqnarray}   \label{F111X1X.14}
- \sum_{i, k, h > 0} \frac{1}{1-q^{i+h}} \frac{q^i}{1-q^i} 
     \frac{1}{1-q^k} \frac{q^{k+h}}{1-q^{k+h}} \frac{h}{1-q^h} 
 \cdot K_X^2.
\end{eqnarray}
The only nonzero contribution of the case $\w \la^{(2)} = (-k, -\ell) 
\le \la^{(2)}$ (with $k, \ell > 0$) to \eqref{F111X1X.11} 
come from the following situation:
\begin{enumerate}
\item[$\bullet$] $k + \ell = i+j$ for some positive integers $i$ and $j$, 
$\la^{(2)} = \mu = (-k, -\ell, k + \ell)$,
and $\w \la^{(1)} = (i, j) \le \la^{(1)} = \la = (-i-j, i, j)$.
\end{enumerate}
So the contribution of the case $\w \la^{(2)} = (-k, -\ell) 
\le \la^{(2)}$ (with $k, \ell > 0$) to \eqref{F111X1X.11} is 
$$
(q;q)_\infty^{\chi(X)} \cdot \frac14 \sum_{i+j=k+\ell} 
     \frac{-q^i}{1-q^i} \frac{-q^j}{1-q^j} 
     \frac{1}{1-q^k} \frac{1}{1-q^\ell}  
\Tr \, q^\fn \fa_{-k - \ell}((1_X - K_X)^2)\fa_{k + \ell}(1_X).
$$
By \eqref{Trij1Xij1X.01},
the contribution of the case $\w \la^{(2)} = (-k, -\ell) 
\le \la^{(2)}$ (with $k, \ell > 0$) to \eqref{F111X1X.11} is 
\begin{eqnarray}   \label{F111X1X.15}
-\frac14 \sum_{i+j=k+\ell} \frac{q^i}{1-q^i} \frac{q^j}{1-q^j} 
\frac{1}{1-q^k} \frac{1}{1-q^\ell} \frac{(k+\ell)q^{k+\ell}}{1-q^{k+\ell}} 
\cdot K_X^2.
\end{eqnarray}
The only nonzero contributions of the case $\w \la^{(2)} = (k) 
\le \la^{(2)}$ (with $k > 0$) to \eqref{F111X1X.11} 
come from the following 2 situations:
\begin{enumerate}
\item[$\bullet$] $\la^{(2)} = \mu = (-k-\ell, \ell, k)$ 
for some positive integer $\ell$,
and $\w \la^{(1)} = (-k) \le \la^{(1)} = \la = (-k, -\ell, k+\ell)$;

\item[$\bullet$] $k = i+j$ for some positive integers $i$ and $j$, 
$\la^{(2)} = \mu = (-i, -j, k)$,
and $\w \la^{(1)} = (-k) \le \la^{(1)} = \la = (-k, i, j)$.
\end{enumerate}
So the contribution of the case $\w \la^{(2)} = (k) 
\le \la^{(2)}$ (with $k > 0$) to \eqref{F111X1X.11} is 
\begin{eqnarray*}   
& &(q;q)_\infty^{\chi(X)} \cdot \left (\sum_{k, \ell > 0} \frac{1}{1-q^k} 
   \frac{-q^k}{1-q^k} \Tr \, q^\fn \fa_{-\ell}\fa_{k+\ell}(1_X)
   \fa_{-k - \ell}\fa_\ell(1_X - K_X) \right.    \\
& &+ \sum_{i> j > 0} \frac{1}{1-q^{i+j}} 
   \frac{-q^{i+j}}{1-q^{i+j}} \Tr \, q^\fn \fa_i\fa_j(1_X)
   \fa_{-i}\fa_{-j}(1_X - K_X)   \\
& &\left. + \frac14 \sum_{i > 0} \frac{1}{1-q^{2i}} 
   \frac{-q^{2i}}{1-q^{2i}} \Tr \, q^\fn \fa_i\fa_i(1_X)
   \fa_{-i}\fa_{-i}(1_X - K_X) \right )   \\
&=&(q;q)_\infty^{\chi(X)} \cdot \left (-\sum_{k, \ell > 0}  
   \frac{q^k}{(1-q^k)^2} \Tr \, q^\fn \fa_{-\ell}\fa_{k+\ell}(1_X)
   \fa_{-k - \ell}\fa_\ell(1_X) \right.    \\
& &- \sum_{i> j > 0}  
   \frac{q^{i+j}}{(1-q^{i+j})^2} \Tr \, q^\fn \fa_i\fa_j(1_X)
   \fa_{-i}\fa_{-j}(1_X)   \\
& &\left. - \frac14 \sum_{i > 0}   
   \frac{q^{2i}}{(1-q^{2i})^2} \Tr \, q^\fn \fa_i\fa_i(1_X)
   \fa_{-i}\fa_{-i}(1_X) \right ).
\end{eqnarray*} 
By \eqref{Trij1Xij1X.05} and \eqref{Trij1Xij1X.07},
the contribution of the case $\w \la^{(2)} = (k) 
\le \la^{(2)}$ (with $k > 0$) to \eqref{F111X1X.11} is equal to 
\begin{eqnarray}   \label{F111X1X.16}
& &-\chi(X) \cdot \sum_{k, \ell > 0} \frac{q^k}{(1-q^k)^2} 
   \frac{\ell (k+\ell)q^\ell}{(1-q^\ell)(1-q^{k+\ell})}   \nonumber  \\
& &-\chi(X) \cdot \frac12 \sum_{i, j > 0} \frac{q^{i+j}}{(1-q^{i+j})^2} 
   \frac{ij}{(1 - q^{i})(1 - q^{j})}.
\end{eqnarray} 
The only nonzero contributions of the case $\w \la^{(2)} = (-k) 
\le \la^{(2)}$ (with $k > 0$) to \eqref{F111X1X.11} 
come from the following 2 situations:
\begin{enumerate}
\item[$\bullet$] $\la^{(2)} = \mu = (-k, -\ell, k+\ell)$ 
for some positive integer $\ell$,
and $\w \la^{(1)} = (k) \le \la^{(1)} = \la = (-k-\ell, \ell, k)$;

\item[$\bullet$] $k = i+j$ for some positive integers $i$ and $j$, 
$\la^{(2)} = \mu = (-k, i, j)$,
and $\w \la^{(1)} = (k) \le \la^{(1)} = \la = (-i, -j, k)$.
\end{enumerate}
So the contribution of the case $\w \la^{(2)} = (-k) 
\le \la^{(2)}$ (with $k > 0$) to \eqref{F111X1X.11} is 
\begin{eqnarray*}   
& &(q;q)_\infty^{\chi(X)} \cdot \left (\sum_{k, \ell > 0}  
   \frac{-q^k}{1-q^k} \frac{1}{1-q^k} 
   \Tr \, q^\fn \fa_{-k - \ell}\fa_\ell(1_X - K_X)
   \fa_{-\ell}\fa_{k+\ell}(1_X) \right.    \\
& &+ \sum_{i> j > 0} \frac{-q^{i+j}}{1-q^{i+j}} \frac{1}{1-q^{i+j}} 
   \Tr \, q^\fn \fa_{-i}\fa_{-j}(1_X - K_X)\fa_i\fa_j(1_X) \\
& &\left. + \frac14 \sum_{i > 0} \frac{-q^{2i}}{1-q^{2i}} 
   \frac{1}{1-q^{2i}} \Tr \, q^\fn 
   \fa_{-i}\fa_{-i}(1_X - K_X) \fa_i\fa_i(1_X) \right )   \\
&=&(q;q)_\infty^{\chi(X)} \cdot \left (-\sum_{k, \ell > 0}  
   \frac{q^k}{(1-q^k)^2} 
   \Tr \, q^\fn \fa_{-k - \ell}\fa_\ell(1_X)
   \fa_{-\ell}\fa_{k+\ell}(1_X) \right.    \\
& &- \sum_{i> j > 0} \frac{q^{i+j}}{(1-q^{i+j})^2} 
   \Tr \, q^\fn \fa_{-i}\fa_{-j}(1_X)\fa_i\fa_j(1_X) \\
& &\left. - \frac14 \sum_{i > 0} \frac{q^{2i}}{(1-q^{2i})^2} 
   \Tr \, q^\fn \fa_{-i}\fa_{-i}(1_X) \fa_i\fa_i(1_X) \right ).
\end{eqnarray*} 
By \eqref{Trij1Xij1X.06} and \eqref{Trij1Xij1X.08},
the contribution of the case $\w \la^{(2)} = (k) 
\le \la^{(2)}$ (with $k > 0$) to \eqref{F111X1X.11} is equal to 
\begin{eqnarray}   \label{F111X1X.17}
& &-\chi(X) \cdot \sum_{k, \ell > 0} \frac{q^k}{(1-q^k)^2} 
   \frac{\ell (k+\ell)q^{k+\ell}}{(1-q^\ell)(1-q^{k+\ell})} \nonumber  \\
& &-\chi(X) \cdot \frac12 \sum_{i, j > 0} \frac{q^{i+j}}{(1-q^{i+j})^2} 
   \frac{ijq^{i+j}}{(1 - q^{i})(1 - q^{j})}.
\end{eqnarray} 
Finally, the contribution of the case $\w \la^{(2)} = \emptyset 
\le \la^{(2)}$ to \eqref{F111X1X.11} is $0$. 
Combining with the formulas \eqref{F111X1X.11}-\eqref{F111X1X.17}, 
we see that line \eqref{F111X1X.1} is equal to
\begin{eqnarray}   \label{F111X1X.18}
\frac{1}{4} \left (Z(3) - q\frac{\rm d}{{\rm d}q}[1] \right )^2  
   \cdot (K_X^2)^2 + h_{1,1}^{(2)}(q) \cdot \chi(X) + C \cdot K_X^2
\end{eqnarray} 
where $h_{1,1}^{(2)}(q)$ is the quasi-modular form 
from Definition~\ref{h11}, and $C$ denotes
$$
-\frac{1}{4} \sum_{i,j,k, \ell >0, i+j=k+\ell} \frac{(i+j)q^{i+j}
(1+q^{i+j})}{(1-q^i)(1-q^j)(1-q^k)(1-q^{\ell})(1-q^{i+j})}
$$
$$
+ \frac{3}{2} \sum_{i, j, k > 0} \frac{(i+j)q^{i+j+k}(1+q^{i+j})}
{(1-q^i)(1-q^j)(1-q^k)(1-q^{i+j})(1-q^{i+j+k})}    
$$
$$
- \sum_{i,j,k >0} \frac{kq^{i+j+k}(1+q^{k})}
{(1-q^i)(1-q^j)(1-q^k)(1-q^{i+k})(1-q^{j+k})}.
$$

Combining \eqref{F111X1X.1}, \eqref{F111X1X.2}, \eqref{F111X1X.add},
\eqref{F111X1X.3}, \eqref{F111X1X.4}, \eqref{F111X1X.add.5}, 
\eqref{F111X1X.9} and \eqref{F111X1X.18}, 
we conclude that $(q;q)_\infty^{\chi(X)} \cdot F_{1,1}^{1_X, 1_X}(q)$ 
is equal to
\begin{eqnarray}   \label{F111X1X.19}
\frac{1}{4} \big (Z(3) - Z(2) \big )^2 \cdot (K_X^2)^2 
+ h_{1,1}^{(2)}(q) \cdot \chi(X) + D \cdot K_X^2
\end{eqnarray}
where $D$ is equal to
$$
\frac{1}{4} \sum_{n > 0} \frac{n(n-1)^2(q^{2n}+q^n)}{(1-q^n)^3}
$$
$$
+ \frac{1}{2} \sum_{i, j > 0} 
     \frac{j(j-1)q^{i+j}(1+q^{j})}{(1-q^i)(1-q^{j})^2(1-q^{i+j})}
+ \frac{1}{4} \sum_{i, j > 0} 
     \frac{(i+j)(i+j-1)q^{i+j}(1+q^{i+j})}{(1-q^i)(1-q^j)(1-q^{i+j})^2}
$$
$$
-\frac{1}{4} \sum_{i,j,k, \ell >0, i+j=k+\ell} \frac{(i+j)q^{i+j}
(1+q^{i+j})}{(1-q^i)(1-q^j)(1-q^k)(1-q^{\ell})(1-q^{i+j})}
$$
$$
+ \frac{3}{2} \sum_{i, j, k > 0} \frac{(i+j)q^{i+j+k}(1+q^{i+j})}
{(1-q^i)(1-q^j)(1-q^k)(1-q^{i+j})(1-q^{i+j+k})}    
$$
$$
- \sum_{i,j,k >0} \frac{kq^{i+j+k}(1+q^{k})}
{(1-q^i)(1-q^j)(1-q^k)(1-q^{i+k})(1-q^{j+k})}.
$$
By Definition~\ref{h11}, $D$ is equal to
$$
\frac{1}{4} \sum_{n > 0} \frac{n(n-1)^2(q^{2n}+q^n)}{(1-q^n)^3}
$$
$$
+ \frac{1}{2} \sum_{i, j > 0} 
     \frac{j(j-1)q^{i+j}(1+q^{j})}{(1-q^i)(1-q^{j})^2(1-q^{i+j})}
+ \frac{1}{4} \sum_{i, j > 0} 
     \frac{(i+j)(i+j-1)q^{i+j}(1+q^{i+j})}{(1-q^i)(1-q^j)(1-q^{i+j})^2}
$$
$$
+ \frac12 \sum_{i, j, k > 0} \frac{(i+j)q^{i+j+k}(1+q^{i+j})}
{(1-q^i)(1-q^j)(1-q^k)(1-q^{i+j})(1-q^{i+j+k})} - h_{1,1}^{(4)}(q).    
$$
Applying \eqref{qiqj} repeatedly, we conclude that $D$ is equal to
\begin{eqnarray*}   
& &\frac{1}{4} \sum_{n > 0} \frac{n(n-1)^2(q^{2n}+q^n)}{(1-q^n)^3}  \\
& &+ \frac{1}{2} \sum_{i, j > 0} 
     \frac{j(j-1)q^{i+j}(1+q^{j})}{(1-q^i)(1-q^{j})(1-q^{i+j})^2} 
     + \frac{1}{2} \sum_{i, j > 0} 
     \frac{j(j-1)q^{i+2j}(1+q^{j})}{(1-q^{j})^2(1-q^{i+j})^2} \\
& &+ \frac{1}{4} \sum_{i, j > 0} 
     \frac{(i+j)(i+j-1)q^{i+j}(1+q^{i+j})}{(1-q^i)(1-q^{i+j})^3}   \\
& &+ \frac{1}{4} \sum_{i, j > 0} 
     \frac{(i+j)(i+j-1)q^{i+2j}(1+q^{i+j})}{(1-q^j)(1-q^{i+j})^3} \\
& &+ \frac12 \sum_{i, j, k > 0} \frac{(i+j)q^{i+j+k}(1+q^{i+j})}
     {(1-q^i)(1-q^k)(1-q^{i+j})^2(1-q^{i+j+k})}  \\
& &+ \frac12 \sum_{i, j, k > 0} \frac{(i+j)q^{i+2j+k}(1+q^{i+j})}
     {(1-q^j)(1-q^k)(1-q^{i+j})^2(1-q^{i+j+k})} - h_{1,1}^{(4)}(q)  \\
&=&\frac{1}{4} \sum_{n > 0} \frac{n(n-1)^2(q^{2n}+q^n)}{(1-q^n)^3}  
   + \sum_{i, j > 0} 
     \frac{j(j-1)q^{i+j}}{(1-q^i)(1-q^{j})(1-q^{i+j})^2} \\
& &- \frac{1}{2} \sum_{i, j > 0} 
     \frac{j(j-1)q^{i+j}}{(1-q^i)(1-q^{i+j})^2} 
     + \frac{1}{2} \sum_{n > m > 0} \frac{q^n}{(1-q^n)^2}
     \frac{m(m-1)q^{m}(1+q^{m})}{(1-q^{m})^2} \\
& &+ \frac{1}{4} \sum_{n > m > 0} \frac{n(n-1)q^{n}(1+q^{n})}{(1-q^n)^3}
     \frac{1}{1-q^m} + \frac{1}{4} \sum_{n > m > 0} 
     \frac{n(n-1)q^{n}(1+q^{n})}{(1-q^n)^3} \frac{q^m}{1-q^m} \\
& &+ \frac12 \sum_{i, j, k > 0} \frac{(i+j)q^{i+j+k}(1+q^{i+j})}
     {(1-q^i)(1-q^k)(1-q^{i+j})(1-q^{i+j+k})^2}  \\
& &+ \frac12 \sum_{i, j, k > 0} \frac{(i+j)q^{2i+2j+k}(1+q^{i+j})}
     {(1-q^i)(1-q^{i+j})^2(1-q^{i+j+k})^2}    \\
& &+ \frac12 \sum_{i, j, k > 0} \frac{(i+j)q^{i+2j+k}(1+q^{i+j})}
     {(1-q^j)(1-q^k)(1-q^{i+j})(1-q^{i+j+k})^2}   \\
& &+ \frac12 \sum_{i, j, k > 0} \frac{(i+j)q^{2i+3j+k}(1+q^{i+j})}
     {(1-q^j)(1-q^{i+j})^2(1-q^{i+j+k})^2} - h_{1,1}^{(4)}(q).
\end{eqnarray*}
A long and tedious simplification shows that $D$ is equal to
$$
\sum_{n > m > 0} \frac{q^n(1+q^{n})}{(1-q^n)^3} \frac{n-nm+m^2}{1-q^m} 
     + 2\sum_{n > m > \ell > 0} \frac{nq^n(1+q^n)}{(1-q^n)^3}
     \frac{1}{1-q^m} \frac{1}{1-q^\ell}
$$
$$
+ 2 \sum_{n > m > \ell > 0} \frac{q^n}{(1-q^n)^2}
     \frac{mq^m}{(1-q^m)^2}\frac{1}{1-q^\ell} - h_{1,1}^{(4)}(q).
$$
Now our lemma follows from \eqref{F111X1X.19}.
\end{proof}

\begin{theorem} \label{theorem_ch1Lch1L}
Let $L_1$ and $L_2$ be line bundles over a smooth projective surface $X$. 
Then, the reduced generating series 
$\langle \ch_1^{L_1}\ch_1^{L_2} \rangle'$ is equal to
$$
Z(2)^2 \cdot \langle K_X, L_1 \rangle \langle K_X, L_2 \rangle 
   + \left (\frac72 Z(4) - \frac12 Z(2)^2 + Z(2) \right ) 
   \cdot \langle L_1, L_2 \rangle
$$
$$   
+ \frac{1}{2} (Z(3) - Z(2)) \cdot Z(2) 
  \cdot K_X^2 \cdot \langle K_X, L_1 \rangle   
+ \frac12 \cdot q\frac{\rm d}{{\rm d}q} (Z(3) - Z(2)) 
  \cdot \langle K_X, L_1 \rangle
$$
$$   
+ \frac{1}{2} (Z(3) - Z(2)) \cdot Z(2) 
  \cdot K_X^2 \cdot \langle K_X, L_2 \rangle   
+ \frac12 \cdot q\frac{\rm d}{{\rm d}q} (Z(3) - Z(2)) 
  \cdot \langle K_X, L_2 \rangle
$$
$$
+ \left (\frac{5}{4} Z(2)^2 + \frac{5}{4} Z(4) - \frac{10}{3} Z(2)^3 
+ 5 Z(2)Z(4) + \frac{35}{6} Z(6) \right ) \cdot \chi(X)
$$
$$
+ \frac{1}{4} \big (Z(3) - Z(2) \big )^2 \cdot (K_X^2)^2 
+ \left (\sum_{n > m > 0} \frac{q^n(1+q^{n})}{(1-q^n)^3} 
  \frac{n-nm+m^2}{1-q^m}   \right.
$$
$$
+ 2\sum_{n > m > \ell > 0} \frac{nq^n(1+q^n)}{(1-q^n)^3}
     \frac{1}{1-q^m} \frac{1}{1-q^\ell}
+ 2 \sum_{n > m > \ell > 0} \frac{q^n}{(1-q^n)^2}
  \frac{mq^m}{(1-q^m)^2}\frac{1}{1-q^\ell} 
$$
$$
\left. + \frac{1}{4} Z(2)^2 + \frac{1}{4} Z(4) - \frac{2}{3} Z(2)^3 
+ Z(2)Z(4) + \frac{7}{6} Z(6) \right ) \cdot K_X^2.
$$
\end{theorem}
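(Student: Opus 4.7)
The plan is to combine the decomposition \eqref{ch1Lch1L.0} with the four closed-form computations that have already been laid out. By \eqref{ch1Lch1L.0},
$$
\langle \ch_1^{L_1} \ch_1^{L_2} \rangle' = (q;q)_\infty^{\chi(X)} \cdot \big ( F_{1,1}^{1_X, 1_X}(q) + F_{1,0}^{1_X, L_1}(q) + F_{1,0}^{1_X, L_2}(q) + F_{0,0}^{L_1, L_2}(q) \big ),
$$
so the proof is fundamentally an assembly step: substitute each of the four $(q;q)_\infty^{\chi(X)} \cdot F$-expressions and collect by type of coefficient.

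First I would apply Lemma~\ref{F00L1L2} to extract the $Z(2)^2 \cdot \langle K_X, L_1 \rangle \langle K_X, L_2 \rangle$ contribution and the $\big (\tfrac{7}{2} Z(4) - \tfrac{1}{2} Z(2)^2 + Z(2) \big ) \cdot \langle L_1, L_2 \rangle$ contribution. Next I would invoke Lemma~\ref{F101XL} twice, once with $L = L_1$ and once with $L = L_2$, producing the four terms of the form $\tfrac{1}{2}(Z(3)-Z(2))Z(2) \cdot K_X^2 \cdot \langle K_X, L_i\rangle$ and $\tfrac{1}{2} \cdot q\tfrac{\rm d}{{\rm d}q}(Z(3)-Z(2)) \cdot \langle K_X, L_i\rangle$ for $i=1,2$, which appear unchanged in the statement. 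Lastly I would substitute Lemma~\ref{F111X1X}, which directly furnishes the $\tfrac{1}{4}(Z(3)-Z(2))^2 (K_X^2)^2$ term, a $h_{1,1}^{(2)}(q) \cdot \chi(X)$ term, and a $K_X^2$-coefficient consisting of three explicit multiple sums minus $h_{1,1}^{(4)}(q)$.

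The only remaining task is to re-express the two $h$-functions via Proposition~\ref{h11024}. Identity \eqref{h11024.02} replaces $h_{1,1}^{(2)}(q) \cdot \chi(X)$ with the desired $\big (\tfrac{5}{4}Z(2)^2 + \tfrac{5}{4}Z(4) - \tfrac{10}{3}Z(2)^3 + 5Z(2)Z(4) + \tfrac{35}{6}Z(6)\big ) \cdot \chi(X)$, while \eqref{h11024.03} converts $-h_{1,1}^{(4)}(q)$ into the explicit polynomial $\tfrac{1}{4}Z(2)^2 + \tfrac{1}{4}Z(4) - \tfrac{2}{3}Z(2)^3 + Z(2)Z(4) + \tfrac{7}{6}Z(6)$ that sits inside the $K_X^2$-bracket in the statement. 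Adding the four lemma outputs and applying these two substitutions reproduces the theorem line by line.

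Because all the genuinely hard work has been front-loaded into Lemmas~\ref{F00L1L2}, \ref{F101XL}, \ref{F111X1X} and Proposition~\ref{h11024}, the main obstacle is purely organizational bookkeeping. Concretely, one has to verify that every contribution of the six types of coefficients (those of $\chi(X)$, $\langle L_1, L_2\rangle$, $\langle K_X, L_1\rangle\langle K_X, L_2\rangle$, $\langle K_X, L_i\rangle$, $K_X^2$, and $(K_X^2)^2$) is accounted for exactly once, that the two applications of Lemma~\ref{F101XL} produce symmetric copies in $L_1$ and $L_2$ without cross-terms, and that the unsimplified $q\tfrac{\rm d}{{\rm d}q}(Z(3)-Z(2))$ and the three multiple sums surviving from Lemma~\ref{F111X1X} appear in the final formula in the form displayed, without being absorbed into the $Z(2),Z(4),Z(6)$ polynomial part. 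No additional geometric or combinatorial input is needed.
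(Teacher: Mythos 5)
Your proposal is correct and matches the paper's own proof, which likewise derives the theorem immediately from \eqref{ch1Lch1L.0} together with Lemma~\ref{F00L1L2}, Lemma~\ref{F101XL} (applied to $L_1$ and $L_2$), Lemma~\ref{F111X1X}, and the substitutions \eqref{h11024.02}--\eqref{h11024.03} from Proposition~\ref{h11024}. The bookkeeping you describe is exactly the assembly the paper performs.
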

\begin{proof}
Follows immediately from \eqref{ch1Lch1L.0}, Lemma~\ref{F00L1L2},
Lemma~\ref{F101XL}, Lemma~\ref{F111X1X} and Proposition~\ref{h11024}.
\end{proof}

Now we are ready to prove the main result in this paper. 

\begin{theorem} \label{corollary_ch1Lch1L}
Let $L_1$ and $L_2$ be line bundles over a smooth projective surface $X$.
If the canonical class of $X$ is numerically trivial, 
then $\langle \ch_1^{L_1}\ch_1^{L_2} \rangle'$ is equal to
$$
\left (\frac72 Z(4) - \frac12 Z(2)^2 + Z(2) \right ) \cdot 
\langle L_1, L_2 \rangle
$$
$$
+ \left (\frac{5}{4} Z(2)^2 + \frac{5}{4} Z(4) - \frac{10}{3} Z(2)^3 
+ 5 Z(2)Z(4) + \frac{35}{6} Z(6) \right ) \cdot \chi(X).
$$
In particular, Qin's Conjecture~\ref{QinConj} holds for 
$\langle \ch_1^{L_1}\ch_1^{L_2} \rangle'$.
\end{theorem}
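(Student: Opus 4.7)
The plan is to derive Theorem~\ref{corollary_ch1Lch1L} as a direct specialization of Theorem~\ref{theorem_ch1Lch1L}, which decomposes $\langle \ch_1^{L_1}\ch_1^{L_2} \rangle'$ into intersection-theoretic contributions weighted by explicit $q$-series. The starting point is the expansion \eqref{ch1Lch1L.0} of the reduced series as a sum of four pieces coming from $F_{0,0}^{L_1, L_2}(q)$, $F_{1,0}^{1_X, L_1}(q)$, $F_{1,0}^{1_X, L_2}(q)$, and $F_{1,1}^{1_X, 1_X}(q)$, each of which is to be computed separately in the lemmas of this section via Lemma~\ref{FtoW}, Theorem~\ref{char_th}, and the trace identities of Section~\ref{sect_CO}.

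First I would assemble Theorem~\ref{theorem_ch1Lch1L} from those four computations and then organize the resulting formula according to its dependence on the surface invariants $\langle L_1, L_2 \rangle$, $\chi(X)$, $\langle K_X, L_i \rangle$, and $K_X^2$. The observation driving the corollary is that numerical triviality of $K_X$ forces $\langle K_X, L_i \rangle = 0$ for $i = 1, 2$, as well as $K_X^2 = 0$ and $(K_X^2)^2 = 0$. Inspection of the formula in Theorem~\ref{theorem_ch1Lch1L} shows that every summand other than those multiplying $\langle L_1, L_2 \rangle$ or $\chi(X)$ carries at least one such factor. Consequently, under the numerical triviality hypothesis, all terms involving $Z(3)$, $q\frac{\rm d}{{\rm d}q}(Z(3)-Z(2))$, and the nested multi-sums over $n > m > \ell > 0$ drop out simultaneously. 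What remains is exactly the coefficient of $\langle L_1, L_2 \rangle$ arising from Lemma~\ref{F00L1L2} together with the $\chi(X) \cdot h_{1,1}^{(2)}(q)$ contribution from Lemma~\ref{F111X1X}; substituting the expression for $h_{1,1}^{(2)}(q)$ from Proposition~\ref{h11024} produces the stated closed form. Qin's Conjecture~\ref{QinConj} then follows at once, since both surviving coefficients lie in $\Q[Z(2), Z(4), Z(6)] = {\bf QM}$ by \eqref{Z246}, with weight at most $6 = \sum_{i=1}^{2}(k_i + 2)$.

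The main obstacle lies not in the final specialization but in establishing Theorem~\ref{theorem_ch1Lch1L}, especially the computation of $F_{1,1}^{1_X, 1_X}(q)$. That calculation produces numerous contributions indexed by pairs of length-three generalized partitions of total weight zero, and many of the individual $q$-series appearing, for instance the multi-variable sums whose summation regions are constrained by conditions such as $i+j = k+\ell$, are not obviously quasi-modular in isolation. The crucial structural point that makes the present corollary clean is that all such potentially non-quasi-modular pieces emerge with a factor of $K_X^2$, while the genuinely $\chi(X)$-proportional part is isolated by matching the Heisenberg-operator trace against the equivariant $\langle \ch_1 \ch_1 \rangle'$ computation of Section~\ref{sect_CompEquiv}, which supplies via Proposition~\ref{h11024} an explicit closed form for $h_{1,1}^{(2)}(q)$ in terms of $Z(2)$, $Z(4)$, and $Z(6)$. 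Once this matching is in place and the $K_X$-dependent terms are set to zero, the claimed formula falls out by collecting the remaining pieces.
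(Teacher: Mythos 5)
Your proposal is correct and follows essentially the same route as the paper: the paper's proof of Theorem~\ref{corollary_ch1Lch1L} simply specializes Theorem~\ref{theorem_ch1Lch1L}, observing that numerical triviality of $K_X$ kills every summand carrying a factor of $\langle K_X, L_i \rangle$, $K_X^2$, or $(K_X^2)^2$, so that only the $\langle L_1, L_2 \rangle$ coefficient from Lemma~\ref{F00L1L2} and the $\chi(X)$ coefficient $h_{1,1}^{(2)}(q)$ (evaluated via Proposition~\ref{h11024}) survive, after which quasi-modularity of weight at most $6$ follows from \eqref{Z246}. Your identification of the structural point --- that all potentially non-quasi-modular multi-sums enter only with $K_X$-dependent factors --- matches the paper exactly.
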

\begin{proof}
By Theorem~\ref{theorem_ch1Lch1L}, 
$\langle \ch_1^{L_1}\ch_1^{L_2} \rangle'$ is equal to
$$
\left (\frac72 Z(4) - \frac12 Z(2)^2 + Z(2) \right ) \cdot 
\langle L_1, L_2 \rangle
$$
$$
+ \left (\frac{5}{4} Z(2)^2 + \frac{5}{4} Z(4) - \frac{10}{3} Z(2)^3 
+ 5 Z(2)Z(4) + \frac{35}{6} Z(6) \right ) \cdot \chi(X).
$$
Note that $\chi(X)$ and $\langle L_1, L_2 \rangle$ are integers. 
By \eqref{Z246}, 
$\langle \ch_1^{L_1}\ch_1^{L_2} \rangle'$ is a quasi-modular form 
of weight $6$.
\end{proof}

\end{document}